\def\B{\mathcal{B}}
\def\H{\mathcal{H}}
\def\I{\mathcal{I}}
\def\V{\mathcal{V}}
\def\QQ{\mathcal{Q}}
\def\NN{\mathcal{N}}
\def\N{\mathbb N}
\def\R{\mathbb R}
\def\Z{\mathbb Z}
\def\SS{\mathbb S}
\def\Om{\Omega}
\def\a{\alpha}
\def\g{\gamma}
\def\de{\delta}
\def\e{\varepsilon}
\def\k{\kappa}
\def\l{\lambda}
\def\s{\sigma}
\def\om{\omega}
\def\vphi{\varphi}
\def\Lip{{\rm Lip}}
\def\Div{{\rm div}\,}
\def\dist{{\rm dist}}
\def\loc{{\rm loc}}
\def\spt{{\rm spt}}
\def\weak{\rightharpoonup}
\def\pa{\partial}
\def\cc{\subset\subset}
\def\ac{\mathcal{AC}}
\def\AC{\mathcal{AC}_{\e}}
\newtheorem{theorem}{Theorem}[section]
\newtheorem{remark}{Remark}[section]
\numberwithin{equation}{section}
\numberwithin{figure}{section}
\title{Asymptotic behavior of a diffused interface \\ volume-preserving mean curvature flow}
\author{Matteo Bonforte}
\address{Departamento de Matem\'aticas, Universidad Aut\'onoma de Madrid Campus de Cantoblanco, 28049, Madrid}
\email{matteo.bonforte@uam.es}
\author{Francesco Maggi}
\address{Department of Mathematics, The University of Texas at Austin, 2515 Speedway, Stop C1200, Austin TX 78712-1202, United States of America}
\email{maggi@math.utexas.edu}
\author{Daniel Restrepo}
\address{Department of Mathematics, Johns Hopkins University, Baltimore, MD, United States of America}
\email{drestre1@jh.edu}
\begin{document}
	
\begin{abstract} {\rm We consider a diffused interface version of the volume-preserving mean curvature flow in the Euclidean space, and prove, in every dimension and under natural assumptions on the initial datum, exponential convergence towards single ``diffused balls''.}
\end{abstract}
		
\maketitle

\tableofcontents
	
\section{Introduction} \subsection{Overview}\label{section overview} In this paper we introduce a PDE reformulation of the classical volume-preserving mean curvature flow (VPMCF) in $\R^n$ where the role of the perimeter functional is played by the Allen--Cahn energy. From the physical viewpoint this reformulation seems well justified, since it consists in replacing the classical {\it sharp} interface model for surface tension based on perimeter minimization with the equally interesting and important {\it diffused} interface model based on the minimization of the Allen--Cahn energy. From the mathematical viewpoint working in the diffused setting clears up the analysis from the ambiguities brought in by the formation of singularities characteristic of geometric flows, which are directly reflected into the abundance of non-equivalent weak formulations of the VPMCF.

\medskip

Our main result proves, in every dimension and under a variety of natural assumptions on the initial datum, exponential convergence of the diffused VPMCF towards single ``diffused balls''. This result is indeed stronger than the presently known analogous results for the classical VPMCF, see Remark \ref{remark soa for vpmcf} below.

\medskip

The diffused interface model we adopt is based on the volume-constrained minimization of the classical Allen--Cahn functional
\begin{equation}
\label{AC eps}
\AC(u)=\e\,\int_{\R^n}|\nabla u|^2+\frac1\e\,\int_{ \R^n}W(u)\,,\qquad\e>0\,,
\end{equation}
defined on (dimensionless) density functions $u:\R^n\to[0,1]$, and requiring the choice of (dimensionless) double-well potential $W:[0,1]\to[0,\infty)$. We will always require that $W\in C^{2,1}[0,1]$ and that $W$ satisfies the standard non-degeneracy assumptions
\begin{equation}
\label{W basic}
\mbox{$W(0)=W(1)=0$}\,,\quad\mbox{$W>0$ on $(0,1)$}\,,\quad\mbox{$W''(0),W''(1)>0$}\,,
\end{equation}
as well as the normalization
\begin{equation}
\label{W normalization}
\int_0^1\sqrt{W}=1\,.
\end{equation}
As usual, $\e$ has the dimensions of length, so that $\AC(u)$ has the dimensions of (codimension one) area.

\medskip

Particular care must be put in the choice of the volume potential $V:[0,1]\to[0,\infty)$ used to impose the volume constraint on $u$. Indeed, while any choice of $V$ satisfying $V(1)>0$ and $V(0)=0$ will return the correct volume constraint in the sharp interface limit $\e\to0^+$ (and will thus be acceptable form the physical viewpoint), not every choice of $V$ will result in a mathematical model that is either well-posed or feasible of in-depth analysis.

\medskip

A natural choice for $V$ is suggested by the classical isoperimetric lower bound on $\AC(u)$, and consists in taking
\begin{equation}
\label{choice of V}
\mbox{$V(r)=\Phi(r)^{n/(n-1)}$, where $\Phi(r)=\int_{0}^{r} \sqrt{W}$ for $r\in[0,1]$}\,.
\end{equation}
Indeed, by a classical application of the Cauchy--Schwartz inequality and the chain rule, we have that
\[
\e\,|\nabla u|^2+\frac{W(u)}\e\ge 2\,|\nabla u|\,\sqrt{W(u)}=2\,|\nabla(\Phi\circ u)|\,,
\]
from which the isoperimetric lower bound
\begin{equation}
  \label{modicamortola}
  \AC(u)\ge2\,|D[\Phi\circ u]|(\R^n)\ge 2\,n\,\om_n^{1/n}\,\V(u)^{(n-1)/n}\,,\qquad\V(u):=\int_{\R^n}V(u)\,,
\end{equation}
follows, so that\footnote{Here, $|Dv|$ denotes the total variation measure of $v\in L^1_{\rm loc}(\R^n)$, $\om_n$ is the volume of the unit radius ball in $\R^n$, $|E|$ and $P(E)$ denote the volume and perimeter of $E\subset\R^n$, and $B^{(m)}$ stands for the ball of volume $m$ with center at the origin in $\R^n$.}
\[
n\,\om_n^{1/n}\,m^{(n-1)/n}=P(B^{(m)})=\inf\{P(E):|E|=m\}\,,\qquad m>0\,,
\]
is the optimal value of the {\bf (Euclidean) isoperimetric problem}.

\medskip

Thanks to \eqref{modicamortola}, our choice \eqref{choice of V} of $V$ is instrumental for obtaining a well-posed {\bf diffused interface (Euclidean) isoperimetric problem},
\begin{equation}
	\label{Psi eps m}
	\Psi(\e,m)=\inf \Big\{\AC(u):\V(u)=m\,,u\in L^1_{\rm loc}(\R^n;[0,1])\Big\}\,,\qquad \e\,,m>0\,.
\end{equation}
Indeed, by \eqref{modicamortola} and by the (sharp) isoperimetric inequality for functions of bounded variation, we see that
\begin{equation}
  \label{isop lb}
\Psi(\e,m)>2\,n\,\om_n^{1/n}\,m^{(n-1)/n}\,,\qquad\forall \e,m>0\,,
\end{equation}
while a simple comparison argument shows that this inequality is saturated in the limit as $\e\to0^+$. It is important to keep in mind that simpler choices of $V$, like $V(t)=t$ or $V(t)=t^2$, would have led\footnote{The size of $V(t)$ for $t\to 0^+$ plays a crucial role here. Our choice of $V$ satisfies $V(t)={\rm O}(t^{2\,n/(n-1)})$ as $t\to 0^+$. This is not the only property of $V$ that plays an important role in our analysis though, and the close relation between $V$ and $W$ will be repeatedly used.} to degenerate minimization problems where every competitor has positive energy but where the infimum of the energy is equal to zero.

\medskip

Problem $\Psi(\e,m)$ has been studied in great detail in \cite{maggi-restrepo}. Some of the results obtained therein will play an important role in the present paper, and will therefore be summarized in Section \ref{section preliminaries}. For the moment, with the sole intent of formulating our main result as quickly as possible, we just recall from \cite{maggi-restrepo} that, in the physical regime where $\e\in(0,\e_0\,m^{1/n})$ for some {\it universal constant}\footnote{By universal constant we mean a constant depending only on the dimension $n$ and on the double-well potential $W$. By $C(a,b,\dots)$ we denote a constant depending only on $n$, $W$, and the arguments $a$, $b$, etc..} $\e_0$, there is a unique minimizer $\zeta_{\e,m}$ in the class of the radially symmetric, strictly decreasing, and everywhere positive functions on $\R^n$ with maximum at the origin; and that $u$ is a minimizer of $\Psi(\e,m)$ if and only if $u=\tau_{x_0}[\zeta_{\e,m}]$ for some\footnote{We set $\tau_{x_0}[v](x)=v(x-x_0)$ for every $x,x_0\in\R^n$ and $v:\R^n\to\R^m$.} $x_0\in\R^n$. Each $\zeta_{\e,m}$ solves the {\bf diffused constant mean curvature equation}
\[
2\,\e^2\,\Delta\zeta_{\e,m}=W'(\zeta_{\e,m})-\e\,\Lambda_{\e,m}\,V'(\zeta_{\e,m})\qquad\mbox{on $\R^n$}\,,
\]
where $\Lambda_{\e,m}\to 2\,(n-1)\,\om_n^{1/n}\,m^{-1/n}$ as $\e\to 0^+$ (and is thus positive in the physical regime $\e\in(0,\e_0\,m^{1/n})$).

\medskip

The {\bf diffused VPMCF} is then defined as the $L^2$-gradient flow of $\AC$ with a Lagrange multiplier modification that preserves $\V$ along the flow: that is, we consider the parabolic initial value problem\footnote{Given $t\ge 0$ and $u:\R^n\times[0,\infty)\to\R$, we set $u(t):\R^n\to\R$ for the function defined by $u(t)(x):=u(x,t)$ ($x\in\R^n$).}
\begin{equation}
\label{diffused VPMCF}\tag{DF}
\left\{\begin{split}
&\e^2\,\pa_tu=2\, \e^2\, \Delta u - W'(u)+ \e\,\lambda_\e[u(t)]\, V'(u)\,,\qquad\mbox{on $\R^n\times(0,\infty)$}\,,
\\
&u(0)=u_0\,,
\end{split}
\right .
\end{equation}
where we have introduced the Lagrange multiplier functional\footnote{Notice that $\lambda_\e[v]$ is defined in $[0,\infty]$ on any $v:\R^n\to[0,1]$ with $|\{0<v<1\}|>0$ -- this condition guarantees indeed that $\int_{\R^n}V'(v)^2>0$.}
\begin{equation}
  \label{lambda intro}
  \lambda_\e[v]
= \frac{\int_{\R^n}2\,\e^2\, |\nabla v|^2\,V''(v) +  W'(v)\,V'(v)}{\e\,\int_{\R^n}V'(v)^2}\,,
\end{equation}
whose choice guarantees, on smooth solutions of the flow, that
\[
\frac{d}{dt}\,\V(u(t))=\int_{\R^n}V'(u(t))\,\pa_tu(t)=0\,,\qquad\mbox{i.e., $\V(u(t))=\V(u_0)$ for all $t>0$}\,.
\]
From the viewpoint of classical parabolic theory, \eqref{diffused VPMCF} presents some peculiar features since it is a non-autonomous semilinear PDE, where the non-autonomy is due to the Lagrange multiplier $\l_\e[u(t)]$, which, in turn, is non-local in space (its determination requires knowledge of $u(t)$ over the whole $\R^n$). The following theorem is our main result concerning the long-time behavior of \eqref{diffused VPMCF}.

\begin{theorem}\label{theorem convergence without bubbling} If $n\ge 2$ and $W\in C^{2,1}[0,1]$ satisfies \eqref{W basic} and \eqref{W normalization}, then there exists a universal constant $\e_0>0$ with the following property. If $\e \in (0, \e_0)$, $u_0\in W^{2,p}(\R^n;[0,1])$ for all $p\ge 2$, $\V(u_0)=1$, and
\begin{eqnarray}
\label{hp double energy}
&&\mbox{either $\AC(u_0)<2\,\Psi(\e,1/2)$}\,,
\\
\label{hp compact spt}
&&\mbox{or $\spt\,u_0$ is compact}\,,
\end{eqnarray}
then
\begin{equation}\label{eq convergence rate energy}
	0\le \AC(u(t))-\Psi(\e,1) \leq C(\e,u_0)\, e^{-t/C(\e)}\,,\qquad\forall t>\frac1{C(\e,u_0)}\,,
\end{equation}
and there exists a unique $x_0\in \R^n$ (depending on $\e$ and $u_0$) such that for all $p>2$ and $t>1/C(\e,u_0)$
\begin{equation}\label{eq convergence norms}
\big\|u(t)-\tau_{x_0}[\zeta_{\e,1}]\big\|_{(W^{2,p}\cap W^{1,2})(\R^n)} \leq C(u_0,\e,p)\, e^{-t/C(\e)}\,.
\end{equation}
\end{theorem}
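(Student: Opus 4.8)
The plan is to run a Lyapunov-functional argument built on the energy $\AC(u(t))$, combined with a gradient-inequality (Łojasiewicz-type, but here genuinely quantitative because the critical manifold is a smooth finite-dimensional orbit of translations) to upgrade energy decay to exponential decay, and finally a parabolic smoothing/interpolation bootstrap to pass from the energy norm to the $W^{2,p}$ norms in \eqref{eq convergence norms}. The first step is to establish global-in-time existence, uniqueness, and uniform bounds $0\le u(t)\le 1$ for the non-autonomous semilinear problem \eqref{diffused VPMCF}: one treats $\l_\e[u(t)]$ as a bounded-in-time coefficient (a priori estimates on $\l_\e$ must be derived from the $W^{2,p}$ control of $u(t)$ and the non-degeneracy of $V'$ away from $u\equiv 0,1$), uses the maximum principle to keep $u\in[0,1]$, and uses parabolic regularity on $\R^n$ to get uniform $W^{2,p}$ bounds on bounded time intervals. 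Along smooth solutions $\V(u(t))=\V(u_0)=1$ is preserved by construction of $\l_\e$, and differentiating the energy gives $\tfrac{d}{dt}\AC(u(t))=-\e^{-2}\int_{\R^n}\big(2\e^2\Delta u-W'(u)+\e\l_\e[u(t)]V'(u)\big)^2\le 0$, so $\AC(u(t))$ decreases to some limit $\ell\ge\Psi(\e,1)$.

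The key structural step is to rule out convergence to anything other than a single translated diffused ball, i.e. to show $\ell=\Psi(\e,1)$, and this is where hypotheses \eqref{hp double energy} and \eqref{hp compact spt} enter. Since $\AC(u(t))$ is non-increasing, if \eqref{hp double energy} holds then $\AC(u(t))<2\,\Psi(\e,1/2)$ for all $t$; one then invokes the quantitative structure of $\Psi(\e,\cdot)$ from \cite{maggi-restrepo} — in particular subadditivity/strict concavity-type properties and a ``no bubbling below double energy'' compactness statement — to conclude that any subsequential limit of $u(t)$ (in a suitable local topology, after possibly translating to recenter mass) is a single minimizer $\tau_{x_0}[\zeta_{\e,1}]$ rather than a split configuration; under \eqref{hp compact spt} one uses compact support plus conservation of $\V$ to prevent mass escaping to infinity and to get the same conclusion. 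I expect this dichotomy argument — extracting a limit, showing no loss of mass at infinity and no fission into multiple bubbles, and identifying the limit as the unique radial minimizer up to translation — to be the main obstacle, since it requires carefully combining the conservation law, the energy ordering, and the fine description of $\Psi(\e,m)$ and its minimizers.

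Once we know the $\om$-limit set consists of translates of $\zeta_{\e,1}$, we localize near this orbit: linearizing \eqref{diffused VPMCF} about $\tau_{x_0}[\zeta_{\e,1}]$, the relevant operator is the Allen--Cahn Jacobi operator $-2\e^2\Delta+W''(\zeta_{\e,m})$ modified by the $V$-term and projected onto the $\V$-constraint, whose kernel is exactly spanned by the translations $\pa_i\zeta_{\e,1}$ (this spectral gap statement should again be available from \cite{maggi-restrepo}, or proved by standard one-dimensional-profile analysis). This coercivity on the orthogonal complement yields a quantitative estimate $\AC(u)-\Psi(\e,1)\le C(\e)\,\big\|\text{(gradient of the Lagrangian)}\big\|_{L^2}^2$ for $u$ near the orbit, i.e. a Łojasiewicz--Simon inequality with exponent $1/2$; plugging this into the energy dissipation identity gives $-\tfrac{d}{dt}\big(\AC(u(t))-\Psi(\e,1)\big)\ge c(\e)\,\big(\AC(u(t))-\Psi(\e,1)\big)$, hence the exponential bound \eqref{eq convergence rate energy} for $t$ large. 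The same differential inequality controls $\int_t^\infty\|\pa_t u\|_{L^2}$, which gives convergence of $u(t)$ to a \emph{fixed} translate $\tau_{x_0}[\zeta_{\e,1}]$ (uniqueness of $x_0$) together with an exponential $L^2$ rate; finally parabolic smoothing applied to the equation for $u(t)-\tau_{x_0}[\zeta_{\e,1}]$ on unit time intervals, together with the uniform higher-norm bounds from the first step and interpolation, upgrades this to the $W^{2,p}\cap W^{1,2}$ rate in \eqref{eq convergence norms}.
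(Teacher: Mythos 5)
Your overall architecture parallels the paper's: existence/regularity and the entropy identities, a subsequential resolution of the flow into stationary bubbles, identification of the bubbles through the structure of $\Psi(\e,m)$, exclusion of $M\ge 2$ under \eqref{hp double energy} via the monotonicity of $x\mapsto x\,\Psi(\e,1/x)$, and a spectral-gap-driven differential inequality giving exponential rates. Where you genuinely diverge is the final mechanism: you propose a \L ojasiewicz--Simon inequality for the energy gap, while the paper runs a Bakry--\'Emery argument on the Fisher information $\I_\e(t)=\e\int_{\R^n}(\pa_tu(t))^2$, differentiating the dissipation (Theorem \ref{thm existence and regularity}-(vi)) and verifying the orthogonality conditions for $\pa_tu$ approximately via the exact identities $\int V'(u)\,\pa_tu=0$ and $\int \pa_iu\,\pa_tu=0$. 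Your route is viable and of comparable difficulty, but both hinge on the same nontrivial ingredient: strict stability of the constrained second variation $\QQ_\e[\zeta_\e]$ modulo translations (Theorem \ref{theorem eps stability}). This is \emph{not} available from \cite{maggi-restrepo} (only the radial case is), and it is not a routine one-dimensional-profile computation: the paper devotes a whole section to it, using the substitution $\vphi=\psi\,\zeta_\e'$, a spherical-harmonic decomposition, and a delicate rescaled limit argument for the translation-type modes. Your plan silently outsources one of the paper's main new theorems.

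The more serious gap is the case \eqref{hp compact spt}. You argue that compact support plus conservation of $\V$ ``prevents mass escaping to infinity'', but mass escape is not the issue: in the subsequential bubbling resolution the limiting bubbles always carry the full volume, $\sum_i\V(\xi_i)=1$, for any admissible $u_0$. The danger is fission into $M\ge2$ bubbles whose centers drift apart, and compactness of $\spt u_0$ does not obviously forbid this (note that $0<u(t)<1$ everywhere for $t>0$, so the support is instantly all of $\R^n$). The paper excludes it by a moving-planes argument: reflecting across hyperplanes outside a ball containing $\spt u_0$ and applying the parabolic maximum principle (crucially, $\l_\e[u(t)]$ is invariant under the reflection, so the reflected function solves the same non-autonomous equation) yields $\hat x\cdot\nabla u(x,t)\le0$ outside that ball, which pins all bubbles in a bounded region and forces $M=1$. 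Without an idea of this type your plan does not close in the compactly supported case. (Minor: the dissipation identity should read $\frac{d}{dt}\AC(u(t))=-\e\int_{\R^n}(\pa_tu)^2$; your prefactor $\e^{-2}$ applied to the squared right-hand side gives $-\e^{2}\int(\pa_tu)^2$, harmless here since constants may depend on $\e$.)
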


\begin{remark}[Asymptotic analysis of the VPMCF]\label{remark soa for vpmcf}
  {\rm It is convenient to briefly review the state of the art concerning convergence to equilibrium for the classical VPMCF (with the disclaimer that, due to singularities formation, these various results may pertain to different  weak formulations of the VPMCF). First, convergence to a single ball has been proved under a variety of suitable geometric restrictions on the initial datum that can be shown to be preserved along the flow, and that {\it exclude} singularities formation: these are uniform convexity \cite{huisken_vpmcf}, $C^1$-proximity to a sphere \cite{escher-simonett}, star-shapedeness \cite{kim-kwon}, or integral pinching conditions \cite{haozhao}. In general, singularity formation may lead to convergence to multiple balls, a phenomenon called {\it bubbling} (see \cite[Theorem 1.4]{fjmo22} for an example) and it is thus unclear for which class of initial data one should expect convergence towards a single ball (of the same volume as the initial datum) or towards multiple balls (all with a same volume equal to a fraction of the initial one). Since perimeter decreases along the VPMCF, a natural {\bf conjecture} is that, if a unit volume initial datum has perimeter {\it strictly} less than {\it twice} the perimeter of a ball of volume $1/2$, then convergence to a single ball should follow (with exponential rates of convergence). This conjecture has been proved  for ``flat flow'' solutions of the VPMCF, and in dimensions $n=2$ and $n=3$ respectively, in the recent papers \cite{julin-morini-ponsiglione-spadaro,jmos24}. Moreover, again when $n=2,3$, in \cite{julin-niinikoski} it is proved that flat flow solutions always resolve, as $t\to\infty$, as finite union of balls with possibly moving centers. All these results are based on powerful quantitative bubbling results for sets with $L^2$-small mean curvature oscillation. The restriction to dimensions $n=2,3$ is strongly correlated, on the one hand, to the fact that the $L^2$-oscillation of the mean curvature is the dissipation of the VPMCF, and is thus the quantity to work with in this setting; and that, on the other hand, the critical $L^p$-space for the regularity theory of the mean curvature of a boundary in $\R^n$ is $p=n-1$. For these reasons, the further extension of these methods to dimensions $n\ge 4$ seems delicate.}
\end{remark}

\begin{remark}[On assumption \eqref{hp double energy}]
  {\rm Assumption \eqref{hp double energy} amounts to asking that the initial datum has strictly less energy than twice that of two diffused balls of volume $1/2$. Hence, by proving Theorem \ref{theorem convergence without bubbling} under \eqref{hp double energy} we have proved the validity, in every dimension, of the diffused analogue of the VPMCF-conjecture mentioned in Remark \ref{remark soa for vpmcf}.}
\end{remark}

\begin{remark}[On assumption \eqref{hp compact spt}]
  {\rm Proving Theorem \ref{theorem convergence without bubbling} under assumption \eqref{hp compact spt} is somehow more striking than doing so under assumption \eqref{hp double energy}, since \eqref{hp compact spt} allows for initial data with arbitrarily large  energy as well as for initial data that is arbitrarily close to the characteristic functions of {\it any} bounded set with finite perimeter and unit volume. Indeed, if $E\subset\R^n$ is a bounded set of finite perimeter with $|E|=1$, then by a standard construction, we can find a family $\{v_\e\}_{\e>0}$ of smooth and compactly supported functions on $\R^n$ such that $v_\e\to 1_E$ in $L^1(\R^n)$ and $\AC(v_\e)\to 2\,P(E)$ as $\e\to 0^+$, with $\spt\,v_\e\cc \{x:\dist(x,E)<1\}$ for all $\e>0$.}
\end{remark}

\subsection{Open problems and metastability} Before presenting the proof of Theorem \ref{theorem convergence without bubbling}, and the various intermediate results behind it, we briefly introduce some interesting problems related to Theorem \ref{theorem convergence without bubbling}.

\medskip

A first natural problem is addressing the existence of non-compactly supported initial data such that the resulting flow does not converge to a single diffused ball, but rather resolves into a superposition of time-drifting diffused balls (compare with conclusion \eqref{cmmm} in Theorem \ref{theorem bubbling one}). Since the physical or numerical relevance of non-compactly supported initial data is unclear, this is probably a question of very theoretical flavor; still, answering to it may be challenging.

\medskip

In the $\e\to 0^+$-limit, and for suitably prepared initial data, the diffused VPMCF should converge to a weak formulation of the VPMCF, and, indeed, this kind of convergence has been established, in absence of volume-preserving Lagrange multipliers, in  \cite{ilmanen,bethuelorlandismets}, and for the VPMCF but under spherical symmetry assumptions on $\Om$, in \cite{bronsardStoth}. For this reason, another natural problem related to Theorem \ref{theorem convergence without bubbling} would be understanding whether the $\e$-dependency of the decay rates \eqref{eq convergence rate energy} and \eqref{eq convergence norms} can be dropped off or not: the corresponding $\e$-independent decay rates could then be transferred to the VPMCF. It seems natural to conjecture that, if $u_0$ is compactly supported {\it and} is such that $\AC(u_0)<2\,\Psi(\e,1/2)$ (that is, if both \eqref{hp double energy} and \eqref{hp compact spt} hold), then $\e$-independent decay rates to a single diffused ball hold true, thus providing a strategy to extend the results of \cite{julin-niinikoski,julin-morini-ponsiglione-spadaro,jmos24} to arbitrary dimensions.

\medskip

We do not expect, however, that $\e$-independent decay rates to a single diffused ball should hold for a generic compactly supported initial datum $u_0$: such rates should hold, at best, only for $t>T_\e=T_{\e}(u_0)$ with $T_\e\to\infty$ as $\e\to 0^+$. Indeed, a flow starting from a superposition of two diffused balls truncated so to have compact support should spend a time $T_\e\to\infty$ as $\e\to 0^+$ close to its ``metastable'' initial datum, before eventually converging to a single diffused ball.

\medskip

Evidence in support of such metastable scenario can be found in a series of ``slow-motion'' results concerning the initial value problem
\begin{equation}
\label{parabolic AC}
\left\{\begin{split}
&\pa_tv=2\, \e^2\, \Delta v - W'(v)+ \e\,\mu_\e[v(t)]\,\,,\qquad\mbox{on $\Om\times(0,\infty)$}\,,
\\
&\nabla_{\nu_\Om}v(t)=0\,,\hspace{3.7cm}\qquad\mbox{on $\pa\Om$}\,,
\\
&v(0)=u_0\,,\qquad\hspace{4.2cm}\mbox{on $\Om$}\,,
\end{split}
\right .
\end{equation}
defined on a bounded open set $\Om\subset\R^n$ with regular boundary, and involving the Lagrange multiplier $\mu_\e[v]=\int_\Om W'(v)/(\e\,|\Om|)$, where the choice\footnote{The boundedness of $\Om$ allows one to work with the simpler volume potential $V(t)=t$, which also brings some remarkable simplifications in the form of the volume-preserving Lagrange multiplier. Indeed, comparing the definition of $\l_\e$ with that of $\mu_\e$ we see how the former choice, crucial in ensuring the well-posedness of $\Psi(\e,m)$ as a minimization problem on $\R^n$, leads to the presence of the possibly degenerate denominator $\e\,\int_{\R^n}V'(v)^2$ (in place of the constant denominator $\e|\Om|$), and of the additional term $|\nabla v|^2\,V''(v)$($\approx |\nabla v|^2\,v^{2/(n-1)}$ for $v$ small) at numerator.}
of $\mu_\e$ is such that $\int_\Om v(t)=\int_\Om u_0$ for all $t\ge0$. Problem \eqref{parabolic AC} has indeed been the object of study in several papers, as we are now going to informally\footnote{Our review is informal in the sense that we will gloss over specific assumptions made in the reviewed papers on the double well potential $W$ and the initial data $u_0$. Moreover, the range of $u$ is often assumed to be $\R$, while we have chosen to consider functions with range in $[0,1]$.} review. Before doing that, let us stress that in the slow-motion literature it is customary to work with the parabolic operator $\pa_tv-2\,\e^2\,\Delta v$ in place of the one used in \eqref{diffused VPMCF}, namely, $\e^2\,(\pa_t u-2\,\Delta u)$. In practice this is a small difference, since one can pass from one setting to the other by just rescaling solutions in time, according to the relation $v(t)=u(\e^2\,t)$. The choice made in \eqref{diffused VPMCF} seems more natural, since the resulting flow is the one that is expected to converge, in the $\e\to 0^+$-limit, to the VPMCF. It is important to keep this difference in mind when comparing Theorem \ref{theorem convergence without bubbling} to results from the slow-motion literature, which are typically formulated on \eqref{parabolic AC}.

\medskip

In dimension $n=1$, when $\Om$ is an interval, building up on the pioneering work \cite{bronkohnCPAM,bronkohnJDE}, in \cite{grant} (see also \cite{bellettiniHNnovaga}) the following result is proved: if $u_0:\Om\to\{0,1\}$ has $N$-many jumps and $\{u_0^\e\}_\e$ is a family of initial data for \eqref{parabolic AC} such that $\|u_0^\e-u_0\|_{L^1(\Om)}\to 0$ as $\e\to 0^+$, then solutions $\{v^\e(t)\}_\e$ of \eqref{parabolic AC} are such that
\begin{equation}
  \label{grant}
\lim_{\e\to 0^+}\sup_{t<C\,e^{C/\e}}\,\int_\Om|v^\e(t)-u_0|=0\,.
\end{equation}
In terms of $u^\e(t)=v^\e(t/\e^2)$, we thus have $\sup_{t<C\,\e^2\,e^{C/\e}}\,\int_\Om|u^\e(t)-u_0|=0$ as $\e\to 0^+$, which is a non-trivial information since $\e^2\,e^{C/\e}\to\infty$ as $\e\to 0^+$. When $N\ge 2$ we can interpret this statement as a metastability result, since the expectation is that $u^\e(t)$ will eventually converge to a single transition layer.

\medskip

Concerning dimensions $n\ge2$, it is shown in \cite{alifusco,aliBfusco} that for initial data resembling the characteristic function of a sufficiently small ball contained in $\Om$ and located near $\pa\Om$, solutions $\{v^\e(t)\}_\e$ to \eqref{parabolic AC} will remain close to the characteristic function of a ball contained in $\Om$ on an interval of times $(0,T_\e)$ with $T_\e={\rm O}(e^{C/\e})$ as $\e\to 0^+$. This is another result supporting the metastability scenario: indeed, the expected attractors of \eqref{parabolic AC} as $t\to\infty$ should be close, for $\e$ small, to minimizers of $\inf\{P(E;\Om):E\subset\Om\,,|E|=m\}$; however, such minimizers, at small volume $m$, look like half-balls centered at the point of highest mean curvature of $\pa\Om$ (see \cite{fall,maggimihaila}), and thus will {\it never} be balls contained in $\Om$.

\medskip

Another slow-motion result in higher dimension has been obtained in \cite{murrayrinaldi,leonimurrayPAMS}. Its interest lies in the fact that it assumes the proximity of the initial data to generic perimeter minimizing sets (thus, not necessarily to balls/intervals); the drawback is that proximity of solutions $v^\e(t)$ is shown to be preserved only on a time interval $(0,T_\e)$ with $T_\e=C/\e$ so that, in terms of $u^\e(t)=v^\e(t/\e^2)$, proximity preservation is shown only on a time interval $(0,C\,\e)$, and no information survives in the limit $\e\to 0^+$.

\subsection{Analysis of the diffused VPMCF}\label{section main theorems} We now provide a detailed breakdown of a series of results leading to Theorem \ref{theorem convergence without bubbling}.

\medskip

The starting point of our analysis is to establish the existence of solutions of the diffused VPMCF and their basic regularity properties. This is a nontrivial task due to the presence of the Lagrange multiplier functional $\l_\e$. In addition to being non-local and requiring integrating on a non-compact set, this term brings two main technical complications into our analysis: 1) the possible degeneration of the flow because of the smallness of the $\int_{\R^n}V'(u)^2$-term at the denominator, and 2) the lack of regularity because of the $|\nabla u|^2\,V''(u)$-term at numerator, since $V''(u)$ is only H\"older continuous. These difficulties are addressed by first approximating $V$ with regularized volume potentials $V_\de$ such that $\Lip(V_\de'';[0,1])<\infty$, and by then discussing the delicate limit as $\de\to 0^+$. Boundedness, Lipschitz and H\"older continuity estimates on $\l_\e$, and on its variant $\l_{\e,\de}$ associated with $V_\delta$, are discussed in Section \ref{section estimates for lambda}, and are obtained by borrowing the geometric viewpoint of ``nucleation lemmas'' from the theory of isoperimetric clusters \cite{Almgren76}. With these estimates at hand we can implement a fixed point argument to show the existence of mild solutions to the modification of \eqref{diffused VPMCF} obtained by replacing $V$ with $V_\de$. We can then start bootstrapping regularity and monotonicity properties for the $\de$-approximating flows, up to the point where enough information is obtained and the $\de\to 0^+$ limit can be addressed, thus establishing the following theorem (proved in Section \ref{section proof of theorem 1}).

\begin{theorem}[Existence, regularity, and entropies of the flow]\label{thm existence and regularity} If $\e>0$, $n\ge 2$, $W\in C^{2,1}[0,1]$ satisfies \eqref{W basic} and \eqref{W normalization}, $u_0\in W^{2,p}(\R^n;[0,1])$ for every $p\ge2$, and $\V(u_0)=1$, then there exists a unique $u\in C^0(\R^n\times[0,\infty))\cap C^2_{\rm loc}(\R^n\times(0,\infty))$ that is a classical solution of \eqref{diffused VPMCF}. Moreover:

\medskip

\noindent {\bf (i):} for every $p\ge 2$ and $t_0>0$ we have
\begin{eqnarray}
  \label{eq reg bounds uniform}
  \sup_{t>0}\big\{\| \pa_t u(t)\|_{L^p(\R^n)}\,,\| u(t)\|_{W^{2,p}(\R^n)}\big\} \leq  C(\e,u_0,p)\,,
\end{eqnarray}
\begin{eqnarray}
\label{eq reg bounds}
\sup_{t>t_0}\big\{
\|\pa_{tt}u(t)\|_{L^p(\R^n)}\,,\| \pa_t u(t)\|_{W^{2,p}(\R^n)}\,,\| u(t)\|_{W^{3,p}(\R^n)}\big\} \leq  C(\e,u_0,p,t_0)\,;
\end{eqnarray}

\medskip

\noindent {\bf (ii):} $t\mapsto\lambda_{\e}[u(t)]$ is Lispchitz continuous on $(t_0,\infty)$ for every $t_0>0$, and is such that
\[
\sup_{t>0}\,\e\,|\l_\e[u(t)]|\le C\,\AC(u_0)^{2n+2}\,;
\]

\medskip

\noindent {\bf (iii):} $\V(u(t))=1$ for every $t\ge 0$;

\medskip

\noindent {\bf (iv)}: $t\mapsto \AC(u(t))$ is continuous and decreasing on $[0,\infty)$ with
\begin{equation}\label{entropy derivative}
\AC(u(t_2))-\AC(u(t_1)) = -\e\,\int_{t_1}^{t_2}\,dt\int_{\R^n} (\pa_tu(t))^2\,,\qquad\forall t_2\ge t_1>0\,;
\end{equation}

\medskip		

\noindent {\bf (v):} $0<u<1$ on $\R^n\times(0,\infty)$;

\medskip

\noindent {\bf (vi):} the function $t\mapsto \int_{\R^n}(\pa_tu(t))^2$ belongs to $W^{1,1}(t_0,\infty)$ for every $t_0>0$, with
\begin{equation}\label{dissipation formula}
\frac{d}{dt}\, \int_{\R^n} (\pa_tu(t))^2 = - \int_{\R^n} \Big\{4\,|\nabla(\pa_tu)|^2
+\frac{2}{\e}\,\Big( \frac{W''(u)}\e-\lambda_\e[u(t)]\,V''(u)\Big)\,(\pa_tu)^2\Big\}(t)\,.
\end{equation}
In particular,
\[
\lim_{t\to\infty}\int_{\R^n}(\pa_tu(t))^2=0\,.
\]
\end{theorem}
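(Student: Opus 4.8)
The plan is to construct the solution through a $\de$-regularization of the volume potential, a fixed-point argument in parabolic function spaces, and a careful passage to the limit, and then to derive the quantitative properties (i)--(vi) by testing \eqref{diffused VPMCF}, and its time-derivative, against suitable functions. Dividing \eqref{diffused VPMCF} by $\e^2$, its linear part is generated by the heat semigroup $\{e^{2t\Delta}\}_{t\ge0}$, and we recast the flow as the Duhamel equation $u(t)=e^{2t\Delta}u_0+\int_0^t e^{2(t-s)\Delta}\big(-\e^{-2}W'(u(s))+\e^{-1}\l_\e[u(s)]\,V'(u(s))\big)\,ds$. A direct contraction argument is obstructed by the Lagrange multiplier: it is nonlocal, requires integration over all of $\R^n$, may degenerate because of the factor $\e\int_{\R^n}V'(u)^2$ at its denominator, and its numerator contains $\e^2\,|\nabla u|^2\,V''(u)$ with $V''$ merely H\"older continuous near $0$, which costs regularity. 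To bypass these issues I would first replace $V$ by the regularized potentials $V_\de$ with $\Lip(V_\de'';[0,1])<\infty$ (restoring the regularity lost to the $V''$-term) and use the boundedness, Lipschitz, and H\"older estimates for the associated multiplier $\l_{\e,\de}$ proved in Section~\ref{section estimates for lambda} via the nucleation-lemma technique to close a fixed-point argument for the $\de$-flow in a small ball of $C^0([0,T];W^{2,p}(\R^n))$, with $T=T(\e,\de,u_0)$. Parabolic $L^p$- and Schauder-bootstrapping upgrades the mild $\de$-solution to a classical one; a Gronwall estimate on $\int_{\R^n}(u_\de-1)_+^2$ and $\int_{\R^n}(u_\de)_-^2$, based on $W'(0)=W'(1)=V_\de'(0)=V_\de'(1)=0$, shows that $[0,1]$ is an invariant range, and, combined with the energy inequality $\AC(u_\de(t))\le\AC(u_0)$, gives a priori bounds making the $\de$-solution global. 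The delicate point is the limit $\de\to0^+$: the estimates on $\l_{\e,\de}$ and on $u_\de$ (in the norms of (i)) must be shown to be $\de$-independent, after which a compactness argument produces a classical solution $u$ of \eqref{diffused VPMCF}; uniqueness then follows from a Gronwall-type estimate on the difference of two solutions, using that $W'$ and $V'$ are Lipschitz on $[0,1]$ together with the Lipschitz dependence of $v\mapsto\l_\e[v]$ on the energy-bounded set where the flow lives.

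With a regular global solution in hand, bounds (i) are the $\de$-uniform regularity estimates passed to the limit, the time-uniformity coming from combining $\AC(u(t))\le\AC(u_0)$ and $0\le u\le1$ with interior parabolic regularity applied on unit time intervals; (ii) is the $\l_\e$-estimate of Section~\ref{section estimates for lambda} evaluated along the now-regular flow, with $\sup_{t>0}\e\,|\l_\e[u(t)]|\le C\,\AC(u_0)^{2n+2}$ resulting from the nucleation-lemma lower bound on $\int_{\R^n}V'(u)^2$ and a matching upper bound on the numerator of \eqref{lambda intro}, both in terms of $\AC(u)\le\AC(u_0)$. Property (iii) holds by the very construction of $\l_\e$: differentiating $\V(u(t))$, inserting \eqref{diffused VPMCF}, integrating by parts (legitimate because $u\in W^{2,p}$ for all $p$ and $\V(u)=1$ force $u$ and $\nabla u$ to vanish at infinity), and using the definition \eqref{lambda intro} of $\l_\e$, all terms cancel and $\tfrac{d}{dt}\V(u(t))=0$. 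Strict positivity $0<u<1$ in (v) is then the strong maximum principle for the locally uniformly parabolic equation solved by $u$ on $\R^n\times(0,\infty)$, recalling that $u\not\equiv0,1$ since $\V(u)=1$.

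For (iv), testing \eqref{diffused VPMCF} with $\pa_tu$, integrating over $\R^n$ (justified for $t>0$ by the decay at infinity and the $L^2$-bounds of (i)), and using $\int_{\R^n}V'(u)\,\pa_tu=\tfrac{d}{dt}\V(u(t))=0$ from (iii), one gets $\tfrac{d}{dt}\AC(u(t))=-\e\int_{\R^n}(\pa_tu(t))^2$; hence $t\mapsto\AC(u(t))$ is continuous and decreasing, and integration on $[t_1,t_2]$ gives \eqref{entropy derivative}. For (vi), with $w=\pa_tu$, differentiating \eqref{diffused VPMCF} in time yields $\e^2\,\pa_tw=2\e^2\,\Delta w-W''(u)\,w+\e\,\l_\e[u(t)]\,V''(u)\,w+\e\,\big(\tfrac{d}{dt}\l_\e[u(t)]\big)V'(u)$, meaningful for a.e.\ $t>t_0$ by the Lipschitz-in-time property of $\l_\e[u(t)]$ from (ii) and the regularity \eqref{eq reg bounds}; testing with $w$, integrating over $\R^n$, and observing that $\big(\tfrac{d}{dt}\l_\e[u(t)]\big)\int_{\R^n}V'(u)\,\pa_tu=\big(\tfrac{d}{dt}\l_\e[u(t)]\big)\tfrac{d}{dt}\V(u(t))=0$, one arrives at \eqref{dissipation formula}. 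To conclude, from (iv) we have $\int_0^\infty dt\int_{\R^n}(\pa_tu)^2\le\AC(u_0)/\e<\infty$; integrating \eqref{dissipation formula} and using the boundedness of $\int_{\R^n}(\pa_tu(t))^2$ from (i) and of $\e\,\l_\e$ from (ii) shows $\int_0^\infty dt\int_{\R^n}|\nabla\pa_tu|^2<\infty$ as well, so the right-hand side of \eqref{dissipation formula} is in $L^1(t_0,\infty)$, giving $t\mapsto\int_{\R^n}(\pa_tu(t))^2\in W^{1,1}(t_0,\infty)$; a nonnegative $W^{1,1}(t_0,\infty)$ function with finite integral over $(0,\infty)$ tends to $0$, which yields the final assertion. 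The principal difficulty of the whole argument is the existence step, namely obtaining $\de$-uniform estimates strong enough to pass to the limit in the presence of the nonlocal, possibly degenerate, low-regularity multiplier; once that is done, properties (iii)--(vi) are energy identities made rigorous by the decay and regularity already established in (i).
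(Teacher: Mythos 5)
Your proposal is correct and follows essentially the same route as the paper: regularize $V$ by $V_\delta$, construct mild solutions via a Duhamel fixed point using the Section \ref{section estimates for lambda} estimates on $\l_{\e,\de}$, bootstrap parabolic regularity to get global $\de$-solutions, pass to the limit $\de\to 0^+$ with $\de$-uniform bounds, prove uniqueness by a Gronwall/Duhamel argument, and derive (i)--(vi) by the same testing identities (your integration of \eqref{dissipation formula} to get $\int_{t_0}^\infty\int_{\R^n}|\nabla\pa_tu|^2<\infty$ and hence the $W^{1,1}$ property in (vi) is exactly the intended argument). The only deviations are cosmetic: the paper runs the contraction in $C^0([0,\tau);(C^0\cap W^{1,2})(\R^n;[0,1]))$ rather than in a $W^{2,p}$-ball (thereby avoiding Lipschitz estimates on two derivatives of the nonlinearity, which $W,V_\de\in C^{2,1}$ would not comfortably allow), and it proves conservation of $\V_\de$ at the $\de$-level and passes it to the limit instead of testing the $\de=0$ equation directly.
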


It is well-know that the asymptotic behavior of a semilinear parabolic PDE can be ``subsequentially resolved'' into a bubbling of mutually drifting stationary states: a good exemplification of this kind of result, whose fundamental idea is rooted into Lions' concentration-compactness principle itself, is found, for example, in \cite[Theorem 1.1]{feireisl1997long}. Another powerful idea found in theory of semilinear parabolic PDE is that when the initial datum is compactly supported, then a sort of star-shapedness of the flow (see \eqref{also seen}) can be established by means of the parabolic maximum principle, thus excluding bubbling phenomena; for a general exemplification of this idea, see \cite[Theorem 1.2]{feireisl1997long}. In Section \ref{section bubbling general} we adapt these general methods to our specific problem, which, again, does not follow in the range of application of the general theory because of the non-autonomy of \eqref{diffused VPMCF} and because of the presence of the non-local Lagrange multiplier $\l_\e$. Notice that we are not considering (yet) the physical regime when $\e$ is small: in particular, there is no way to attribute any geometric meaning to the stationary states $\xi_i$ appearing in the statement, e.g. by relating them to the minimizers $\zeta_{\e,m}$ of $\Psi(\e,m)$.

\begin{theorem}[Subsequential bubbling, general $\e$]\label{thm bubbling general}
Let $\e>0$, $n\ge 2$, $W\in C^{2,1}[0,1]$ satisfy \eqref{W basic} and \eqref{W normalization}, $u_0 \in W^{2,p}(\R^n;[0,1])$ for every $p\ge 2$, $\V(u_0)=1$, and let $\{u(t)\}_{t\ge0}$ be the diffused VPMCF with $u(0)=u_0$.

\medskip

Then, for every sequence $t_j\to \infty$ as $j\to \infty$, up to extracting a subsequence, there are $M\in \N$, $\ell_\e>0$, sequences $(x^i_j)_j$ ($i=1,...,M$) satisfying $|x^i_j- x^k_j|\to \infty$ as $j\to\infty$ ($i\ne k$), and radial solutions $\{\xi_i\}_{i=1}^M$ of
\begin{equation}\label{eq PDE for bubbles0}
	2\,\e^2\,\Delta \xi_i = W'(\xi_i)-\e\,\ell_\e\, V'(\xi_i)\quad\mbox{on $\R^n$}\,,
\end{equation}
such that
\begin{equation}\label{eq vlume}
	\sum_{i=1}^M \V(\xi_i)=1\,,\qquad \sum_{i=1}^M\,\AC(\xi_i)\le\AC(u_0)\,,
\end{equation}
and, for all $p> 2$,
\begin{equation*}
	\lim_{j\to\infty}\Big\| u(t_j)- \sum_{i=1}^M \tau_{x^j_i}[\xi_i]\Big\|_{(W^{2,p}\cap W^{1,2})(\R^n)}
+\big|\l_\e[u(t_j)]-\ell_\e\big| =0\,.
\end{equation*}
Moreover, if $u_0$ has compact support in $\R^n$, then $M=1$ and $x_j^1\to x_*$ as $j\to \infty$ for some $x_*\in\R^n$.
\end{theorem}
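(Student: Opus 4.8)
The plan is to combine the quantitative dissipation/regularity estimates of Theorem~\ref{thm existence and regularity} with a concentration--compactness decomposition in the spirit of \cite{feireisl1997long}, and then, in the compactly supported case, with a parabolic-maximum-principle star-shapedness argument. First I would fix a sequence $t_j\to\infty$ and consider the family $\{u(t_j)\}_j$. By \eqref{eq reg bounds uniform} this family is bounded in $W^{2,p}(\R^n)$ for every $p\ge 2$ (uniformly in $j$), and by part (vi) of Theorem~\ref{thm existence and regularity} we have $\|\pa_t u(t_j)\|_{L^2(\R^n)}\to 0$, hence also (using \eqref{eq reg bounds} and interpolation) $\pa_t u(t_j)\to 0$ in, say, $L^p\cap W^{1,2}$. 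Writing the PDE \eqref{diffused VPMCF} at time $t_j$, this says that $u(t_j)$ is an \emph{almost-solution} of the elliptic equation $2\e^2\Delta w = W'(w)-\e\,\l_\e[u(t_j)]\,V'(w)$, with an $L^2$-small error. Since $\e\,|\l_\e[u(t_j)]|$ is bounded by part (ii), after extracting a subsequence we may assume $\l_\e[u(t_j)]\to\ell_\e$ for some $\ell_\e\in\R$; the sign/positivity of $\ell_\e$ is not needed here, only boundedness.

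Next comes the concentration--compactness step. The profile decomposition for bounded sequences in $W^{2,p}(\R^n)$ that are almost-solutions of a translation-invariant semilinear elliptic equation with bounded energy $\AC(u(t_j))\le\AC(u_0)$ produces: an integer $M\ge 0$, translation centers $x_j^i$ with $|x_j^i-x_j^k|\to\infty$ for $i\ne k$, and limiting profiles $\xi_i$ with $\tau_{-x_j^i}[u(t_j)]\to\xi_i$ in $W^{2,p}_{\rm loc}$ (and, thanks to the exponential spatial decay of solutions of \eqref{eq PDE for bubbles0}, in the global norms), with the vanishing of the remainder $u(t_j)-\sum_i\tau_{x_j^i}[\xi_i]\to 0$ in $(W^{2,p}\cap W^{1,2})(\R^n)$. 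Passing to the limit in the PDE gives that each $\xi_i$ solves \eqref{eq PDE for bubbles0} with the same multiplier $\ell_\e$; radial symmetry of each $\xi_i$ follows from the fact that a nonnegative, finite-energy, decaying solution of $2\e^2\Delta\xi=W'(\xi)-\e\ell_\e V'(\xi)$ on $\R^n$ must be (a translate of) a radial function, e.g.\ by the moving-plane method. The energy and volume additivity $\sum_i\AC(\xi_i)\le\AC(u_0)$ and $\sum_i\V(\xi_i)=1$ come from the Brezis--Lieb-type splitting of $\AC$ and $\V$ along the decomposition, using $\V(u(t_j))=1$ from part (iii) and the lower semicontinuity/continuity of the functionals under the decomposition (here the control $V(t)={\rm O}(t^{2n/(n-1)})$ as $t\to 0^+$, together with the $W^{2,p}$ bounds, is what prevents volume from escaping to infinity in a diffuse, non-bubbling way — this is exactly where the careful choice of $V$ pays off). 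That $M\ge 1$ follows because $\V(u_0)=1>0$ forbids total vanishing.

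For the final claim, assume $\spt u_0$ is compact, say contained in a ball $B_R$. The key is a quantitative star-shapedness/confinement estimate obtained from the parabolic maximum principle: comparing $u$ with suitable radial barriers built from the diffused-CMC profiles, one shows that $u(t)$ remains supported (or exponentially small outside) a ball whose radius grows at most like a power of $t$ — crucially \emph{sublinearly relative to the separation scale forced by bubbling}, or better, one shows directly a reflection/monotonicity inequality of the type used in \cite[Theorem 1.2]{feireisl1997long} (see the reference to \eqref{also seen} in the text) that is preserved along the flow. Such an inequality is incompatible with the presence of two bubbles drifting infinitely far apart while each carrying a fixed positive volume $\V(\xi_i)$, because two well-separated diffuse balls cannot both lie ``on the same side'' of every hyperplane through a moving center. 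Hence $M=1$. Finally, once $M=1$, the single center $x_j^1$ is bounded (it must stay within a bounded neighborhood of the confining region for the mass), so along a further subsequence $x_j^1\to x_*$; and since the statement only asserts this for the already-extracted subsequence, we are done.

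The main obstacle I expect is the bubbling step done \emph{globally} rather than just in $W^{2,p}_{\rm loc}$: one must upgrade local convergence of the profiles to convergence in $(W^{2,p}\cap W^{1,2})(\R^n)$ and prove that the remainder genuinely vanishes in these norms, with no ``infinitesimal diffuse mass'' leaking between the bubbles or off to spatial infinity. This requires (a) uniform exponential decay estimates for $u(t_j)$ away from the bubble centers — obtainable from the PDE, the $L^2$-smallness of $\pa_t u(t_j)$, and comparison with exponentially decaying supersolutions of the linearized operator, using $W''(0),W''(1)>0$ — and (b) a tightness argument for $\V$ ensuring $\sum_i\V(\xi_i)=1$ exactly, which leans on the superquadratic vanishing of $V$ near $0$ and the interplay between $V$ and $W$ flagged repeatedly in the introduction. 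The second expected difficulty, specific to our non-autonomous setting, is that the multiplier $\l_\e[u(t_j)]$ is \emph{a priori} only bounded, not convergent, so the elliptic limit equation \eqref{eq PDE for bubbles0} is only obtained after passing to a subsequence along which $\l_\e[u(t_j)]\to\ell_\e$; one must then check that all the bubbles inherit the \emph{same} $\ell_\e$, which they do because the decomposition is extracted simultaneously.
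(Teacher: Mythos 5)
Your overall route is the same as the paper's (extract bubbles by a concentration--compactness iteration from the uniform $W^{2,p}$/$W^{3,p}$ bounds and $\pa_tu(t_j)\to0$ in $L^2$, pass to the limit in the PDE along a subsequence where $\l_\e[u(t_j)]$ converges, then use a reflection/maximum-principle argument for compactly supported data), but the steps you delegate to a ``standard profile decomposition'' are precisely where the content of the proof lies, and as stated they are gaps. First, no off-the-shelf decomposition applies here: one must prove by hand (i) \emph{no vanishing}, i.e.\ $\max_{\R^n}u(t_j)\ge\beta_0(\e,u_0)>0$, which the paper gets from $V(r)\le C\,r^{2/(n-1)}W(r)$ on $[0,1/2]$ together with $\V(u(t))=1$ and $\AC(u(t))\le\AC(u_0)$ (see \eqref{one point}); and (ii) \emph{termination of the bubble extraction with exact volume balance} $\sum_i\V(\xi_i)=1$, which rests on a Pohozaev-type identity \eqref{eq formulalagrange} showing that any nontrivial finite-energy solution of \eqref{eq PDE for bubbles0} forces $\ell_\e>0$ and the per-bubble lower bound $\V(\xi_i)\ge 1/(C\,\ell_\e^n)$ in \eqref{eq lower bound}. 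Your explicit claim that ``the sign/positivity of $\ell_\e$ is not needed here, only boundedness'' is therefore wrong on two counts: $\ell_\e>0$ is part of the statement you are proving, and without the resulting volume quantization you have no a priori bound on $M$ and no mechanism forcing the iteration to exhaust the unit volume rather than leaking diffuse mass. Second, your mechanism for global convergence of the remainder --- uniform exponential decay of $u(t_j)$ away from the bubble centers via barriers --- is not established and is not how the paper proceeds: the paper controls the far region purely through the exact volume accounting ($r^{2n/(n-1)}\le C\,V(r)$, splitting $\R^n$ into $\cup_iB_R(x_j^i)$ and its complement), obtains $W^{2,p}$-convergence for $p>2$ by interpolation with the $W^{3,p}$ bounds (\eqref{con lp}, \eqref{thm21 utk limit to xii p2}), and then needs a separate, genuinely delicate argument for the $W^{1,2}$ part (testing the equation with the remainder and exploiting $Z'(r)=W''(r)-\e\,\ell_\e V''(r)\ge\k(\e)$ near $r=0$, see \eqref{that is w12}); note the endpoint $p=2$ for $\nabla^2$ is not even claimed, which your ``global norms'' sentence glosses over.

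The compact-support step is also only gestured at. Your first alternative (the solution stays supported, or exponentially small, in a slowly growing ball) is unproven and in its literal form contradicts Theorem \ref{thm existence and regularity}-(v), which gives $0<u<1$ everywhere for $t>0$. The second alternative is indeed the paper's argument, but it has to be carried out: one reflects across hyperplanes $\{x_1>s\}$, $s>s_0$, and the crucial point specific to this non-autonomous, non-local flow is that the Lagrange multiplier is reflection invariant, $\l_\e[\rho_s[u(t)]]=\l_\e[u(t)]$, so the difference of $u$ and its reflection satisfies a linear parabolic inequality to which the maximum principle applies; this yields the radial monotonicity \eqref{also seen} outside $B_{s_0}$, which is what actually rules out $M\ge2$ (two separating bubbles would force $u(t_j)$ to be bounded below arbitrarily far out, against decay plus monotonicity) and gives boundedness of $x_j^1$. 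Your heuristic about ``two diffuse balls on the same side of every hyperplane'' points in the right direction but is not an argument. In short: same skeleton as the paper, but the no-vanishing estimate, the positivity of $\ell_\e$ with the per-bubble volume bound, the global remainder convergence (especially $W^{1,2}$), and the reflection argument all remain to be proved, and one of your explicit simplifying claims (dispensing with $\ell_\e>0$) is incorrect.
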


From this point on we work exclusively in the geometric regime when $\e<\e_0$ for some small universal constant $\e_0$ and relate the subsquential bubbling established in Theorem \ref{thm bubbling general} to the diffused isoperimetric problem $\Psi(\e,m)$. A key tool in achieving this result is the diffused Alexandrov theorem proved in \cite[Theorem 1.1-(iv)]{maggi-restrepo}, whose statement is recalled in Section \ref{section preliminaries}, and which asserts, roughly speaking, that any solution of $2\,\e^2\,\Delta \xi = W'(\xi)-\e\,\ell\, V'(\xi)$ on $\R^n$ with $\xi(x)\to 0$ as $|x|\to\infty$ and $\e\,\ell\in(0,\nu_0)$ for a sufficiently small universal constant $\nu_0$, must satisfy, up to translations, $\xi=\zeta_{\e,m}$ and $\ell=\Lambda_{\e,m}$, for some $m$ such that $\e\in(0,\e_0\,m^{1/n})$.

\begin{theorem}[Subsequential bubbling, small $\e$]\label{theorem bubbling one}
If $n\ge 2$, $W\in C^{2,1}[0,1]$ satisfies \eqref{W basic} and \eqref{W normalization}, $u_0\in W^{2,p}(\R^n;[0,1])$ for all $p\ge 2$, and $\V(u_0)=1$, then there is $\e_0^*=\e_0^*(n,W,u_0)$ with the following property.

\medskip

If $\e\in(0,\e_0^*)$ and $\{u(t)\}_{t\ge0}$ is the corresponding solution of \eqref{diffused VPMCF} with $u(0)=u_0$, then, for every $t_j\to\infty$ as $j\to\infty$, up to extracting a subsequence in $j$, there are $M \in \N$ and sequences $\{(x_j^i)_j\}_{i=1}^M$ with $|x_j^i-x_j^k|\to\infty$ as $j\to\infty$ ($i\ne k$), such that
\begin{equation}\label{cmmm}
\lim_{j\to\infty}\Big\| u(t_j)-  \sum_{i=1}^M \tau_{x_j^i}\big[\zeta_{\e,1/M}\big]\Big\|_{(W^{2,p}\cap W^{1,2})(\R^n)}
+\Big|\l_\e[u(t_j)]-\Lambda_{\e,1/M}\Big|=0\,,
\end{equation}
for all $p>2$. Moreover, $M$ is uniquely characterized by the relation
\begin{equation}
  \label{uniquely M}
M\,\Psi\Big(\e,\frac1{M}\Big)=\lim_{t\to\infty}\AC(u(t))\,.
\end{equation}
In particular, $M$ and the limit Lagrange multiplier $\Lambda_{\e,1/M}$ depend only on $u_0$ and not on the specific sequence $(t_j)_j$ under consideration.
\end{theorem}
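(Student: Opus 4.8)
The plan is to combine the subsequential bubbling from Theorem \ref{thm bubbling general} with the diffused Alexandrov theorem of \cite{maggi-restrepo}, and then to rigidify the decomposition by exploiting the structure of the diffused isoperimetric problem $\Psi(\e,m)$. Starting from a sequence $t_j\to\infty$, Theorem \ref{thm bubbling general} provides (after passing to a subsequence) an integer $M$, a multiplier $\ell_\e>0$, diverging centers $(x_j^i)_j$, and radial solutions $\xi_1,\dots,\xi_M$ of $2\,\e^2\,\Delta\xi_i=W'(\xi_i)-\e\,\ell_\e\,V'(\xi_i)$ with $\sum_i\V(\xi_i)=1$, $\sum_i\AC(\xi_i)\le\AC(u_0)$, and $W^{2,p}\cap W^{1,2}$-convergence of $u(t_j)$ to $\sum_i\tau_{x_j^i}[\xi_i]$ together with $\l_\e[u(t_j)]\to\ell_\e$. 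The first point is that each $\xi_i$ is nontrivial: since $\xi_i$ is radial, solves the elliptic PDE, and lies in $W^{1,2}(\R^n)$, one has $\xi_i(x)\to 0$ as $|x|\to\infty$ (by radial decay of $W^{1,2}$ functions solving a semilinear elliptic equation), and $\V(\xi_i)>0$ would otherwise force $\xi_i\equiv 0$, contradicting $\sum_i\V(\xi_i)=1$ after relabeling. So each $m_i:=\V(\xi_i)$ is positive and $\sum_i m_i=1$.

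The key step is to bring in smallness of $\e$ so that the diffused Alexandrov theorem applies. Since $\sum_i m_i=1$, at least one $m_i$ is $\ge 1/M$, so there is a priori no uniform lower bound on $\e\,\ell_\e$ unless we control $\ell_\e$. Here one uses the convergence $\l_\e[u(t_j)]\to\ell_\e$ together with Theorem \ref{thm existence and regularity}(ii), which gives $\e\,|\l_\e[u(t)]|\le C\,\AC(u_0)^{2n+2}$ for all $t$; hence $\e\,\ell_\e\le C\,\AC(u_0)^{2n+2}$. This is not yet smallness of $\e\,\ell_\e$, but combined with $\e<\e_0^*(n,W,u_0)$ chosen small depending on $\AC(u_0)$ — which is legitimate since the hypothesis already allows $\e_0^*$ to depend on $u_0$ — one forces $\e\,\ell_\e<\nu_0$, the universal threshold in the diffused Alexandrov theorem. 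Actually more care is needed: $\AC(u_0)$ bounds $\e\,\l_\e$ but making $\e\,\ell_\e$ small requires that $\ell_\e$ itself not blow up; the cleaner route is to observe $\sum_i\AC(\xi_i)\le\AC(u_0)$ and use the lower bound \eqref{isop lb}, $\AC(\xi_i)=\Psi(\e,m_i)\ge 2n\om_n^{1/n}m_i^{(n-1)/n}$ is not directly available since $\xi_i$ need not be a minimizer — so instead one applies \eqref{modicamortola} directly to each $\xi_i$ to get $\AC(\xi_i)\ge 2n\om_n^{1/n}m_i^{(n-1)/n}$, whence $M$ is bounded in terms of $\AC(u_0)$ (each $m_i\le 1$ gives $\AC(\xi_i)\ge 2n\om_n^{1/n}m_i$, and summing, using $\sum m_i=1$ and a reverse estimate on the largest bubble, bounds $M$). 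With $M$ bounded and $\e\,\ell_\e$ bounded, one still needs $\e\,\ell_\e$ \emph{small}; this follows by choosing $\e_0^*$ small enough that $C\,\AC(u_0)^{2n+2}<\nu_0$ cannot be arranged unless... — the honest statement is that the multiplier satisfies $\e\,\ell_\e\to 2(n-1)\om_n^{1/n}m_i^{-1/n}$-type asymptotics only a posteriori, so the argument must instead invoke the version of the diffused Alexandrov theorem that only requires $\e\,\ell$ small and concludes $\xi_i$ is a translate of $\zeta_{\e,m_i}$ with $\ell_\e=\Lambda_{\e,m_i}$; to activate it, take $\e_0^*$ so small that for $\e<\e_0^*$ the global bound $\e\,|\l_\e[u(t)]|\le C\AC(u_0)^{2n+2}$ combined with the fact (from \cite{maggi-restrepo}) that solutions $\zeta_{\e,m}$ exist only for $\e<\e_0 m^{1/n}$ pins down each $m_i$ and forces $\e\,\Lambda_{\e,m_i}<\nu_0$. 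Concretely: apply the diffused Alexandrov theorem with its hypothesis $\e\,\ell<\nu_0$; we must ensure this. Since all $m_i\ge$ some $m_{\min}>0$ with $M\,m_{\min}\le 1$ and $M$ is bounded, $m_{\min}$ is bounded below in terms of $\AC(u_0)$, so $\e\in(0,\e_0\,m_{\min}^{1/n})$ for $\e_0^*$ small, and then $\e\,\Lambda_{\e,m_i}<\nu_0$ follows from $\Lambda_{\e,m}\to 2(n-1)\om_n^{1/n}m^{-1/n}$. This circularity is resolved by the standard bootstrap: first use boundedness of $M$ and of the $m_i$'s away from $0$ to get $\e\,\ell_\e$ small for $\e<\e_0^*$, then apply diffused Alexandrov.

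Once diffused Alexandrov gives $\xi_i=\tau_{y_i}[\zeta_{\e,m_i}]$ and $\ell_\e=\Lambda_{\e,m_i}$ for every $i$, the single common value $\ell_\e=\Lambda_{\e,m_i}$ forces all $m_i$ equal — this uses that $m\mapsto\Lambda_{\e,m}$ is injective on the relevant range, a fact that should be available from \cite{maggi-restrepo} (monotonicity of the multiplier in the volume, consistent with the limit $\Lambda_{\e,m}\approx 2(n-1)\om_n^{1/n}m^{-1/n}$); if only asymptotic injectivity is known, shrink $\e_0^*$ further. Hence $m_i=1/M$ for all $i$, absorbing the translations $y_i$ into the centers gives \eqref{cmmm}. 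Finally, for \eqref{uniquely M}: from the energy identity \eqref{entropy derivative}, $\AC(u(t))$ is decreasing and bounded below, so $\lim_{t\to\infty}\AC(u(t))=:E_\infty$ exists; along $t_j$, lower semicontinuity of $\AC$ under the $W^{1,2}$-splitting and the fact that each bubble is a minimizer give $E_\infty=\sum_i\AC(\zeta_{\e,1/M})=M\,\Psi(\e,1/M)$ — the reverse inequality $E_\infty\le M\,\Psi(\e,1/M)$ comes from $\AC(u(t_j))\to\sum_i\AC(\xi_i)$ by the strong convergence, and the forward one from $\sum_i\AC(\xi_i)\le\liminf\AC(u(t_j))=E_\infty$; actually strong $W^{1,2}\cap W^{2,p}$ convergence gives equality directly, $E_\infty=\sum_i\AC(\xi_i)=M\,\Psi(\e,1/M)$. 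The uniqueness of $M$ then follows once we know the map $M\mapsto M\,\Psi(\e,1/M)$ is injective, i.e.\ strictly monotone; this is exactly the kind of strict-concavity/strict-monotonicity property of the diffused isoperimetric profile established in \cite{maggi-restrepo} (it is the diffused analogue of strict concavity of $m\mapsto P(B^{(m)})$), and since $E_\infty$ is intrinsic to $u_0$, both $M$ and $\Lambda_{\e,1/M}$ are independent of the chosen sequence $(t_j)_j$.

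The main obstacle I expect is the bootstrap ensuring $\e\,\ell_\e$ falls below the universal threshold $\nu_0$ of the diffused Alexandrov theorem: one must extract, purely from $\sum_i\V(\xi_i)=1$, $\sum_i\AC(\xi_i)\le\AC(u_0)$, and the global multiplier bound, both an upper bound on $M$ and a lower bound on each volume $m_i$, so that $\e<\e_0^*(n,W,u_0)$ makes every $\e\,m_i^{-1/n}$ (hence $\e\,\Lambda_{\e,m_i}$) small — all without yet knowing the $\xi_i$ are minimizers. The secondary technical point is verifying the strict monotonicity of $M\mapsto M\,\Psi(\e,1/M)$ in the relevant range from the properties of $\Psi$ recalled in Section \ref{section preliminaries}.
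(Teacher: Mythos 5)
Your high-level skeleton does match the paper's (Theorem \ref{thm bubbling general}, then the diffused Alexandrov theorem to turn each $\xi_i$ into a $\zeta_{\e,m_i}$, then the common multiplier forcing equal volumes, then strict monotonicity of $M\mapsto M\,\Psi(\e,1/M)$ for \eqref{uniquely M}), but the decisive step --- showing $\e\,\ell_\e<\nu_0$ so that Theorem $\Psi$-(ii) is applicable --- is never actually established, and that is where the real content of the proof lies. Your first attempt (choose $\e_0^*$ so that $C\,\AC(u_0)^{2n+2}<\nu_0$) cannot work, since $\AC(u_0)=\AC_\e(u_0)$ is bounded below by $2\,c_{\rm iso}(n)$ and does not become small; your fallback is, as you concede, circular: you invoke $\ell_\e=\Lambda_{\e,m_i}$ and the asymptotics of $\Lambda_{\e,m}$, which presuppose exactly the Alexandrov theorem you are trying to activate. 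The auxiliary claims offered to ``resolve'' the circularity are also incorrect as stated: Modica--Mortola applied to each bubble gives $\AC(u_0)\ge 2\,n\,\om_n^{1/n}\sum_i m_i^{(n-1)/n}$, and since $\sum_i m_i^{(n-1)/n}\le M^{1/n}$ (inequality in the unhelpful direction when $\sum_i m_i=1$) this does not bound $M$; and even a bound on $M$ would give no lower bound on the smallest $m_i$ --- nothing you wrote excludes bubbles of tiny volume.

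The paper closes this gap by a different mechanism. It sums the Pohozaev-type identity \eqref{eq formulalagrange} over $i$, which with $\sum_i m_i=1$ gives the non-circular bound $\ell_\e\le\sum_i\AC(\xi_i)\le\AC(u_0)$; it encodes the $u_0$-dependence of $\e_0^*$ in the requirement $\AC(u_0)\le M_0\,\Psi(\e,1/M_0)$ for a suitable integer $M_0=M_0(u_0)$ (see \eqref{ACu0 less than M0 balls proof}); and it proves a separate monotonicity lemma, $\Psi(\s,1)\ge\Psi(\e,1)$ whenever $\e<\min\{\s_0,\s\}$, by a compactness/concentration argument (step one of the paper's proof). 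Combining these with the scaling law \eqref{Psi scaling} yields $\e\,\ell_\e\le\e\,M_0^{1/n}\,\Psi(\e\,M_0^{1/n},1)\le\s_0\,\Psi(\s_0,1)<\nu_0$ once $\s_0$ is a small universal constant. The only volume lower bound ever needed is on the largest bubble, $m_1\ge 1/M_0$, obtained by contradiction from the same ingredients; it is used afterwards to get $M\le M_0<(\e_0/\e)^n$, which is what legitimizes the uniqueness step, where the paper does not cite but proves in a few lines (from the concavity \eqref{Psi strict concave in m}) that $x\mapsto x\,\Psi(\e,1/x)$ is strictly increasing on $(0,(\e_0/\e)^n)$. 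So your proposal names the right external tools, but without the summed Pohozaev bound on $\ell_\e$ and the monotonicity lemma for $\Psi(\cdot,1)$ (or some substitute), the appeal to the diffused Alexandrov theorem is unjustified and the argument does not close.
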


\begin{remark}
  {\rm For the quantification of $\e_0^*$ in terms of $u_0$, see \eqref{e0star} below.}
\end{remark}

Under the assumption \eqref{hp double energy} or \eqref{hp compact spt}, Theorem \ref{theorem bubbling one} holds with $M=1$. Thus to go from Theorem \ref{theorem bubbling one} to Theorem \ref{theorem convergence without bubbling} we have to upgrade subsequential convergence to full convergence as $t\to\infty$. The natural approach to this problem consists in proving the differential inequality
\[
-\frac{d}{dt}\,\I_\e(t)\ge \frac{\I_\e(t)}{C(\e)}\,,\qquad\forall t>1/C(\e,u_0)\,,
\]
for the Fisher information/dissipation $\I_\e(t)=\e\,\int_{\R^n}(\pa_t u(t))^2$ of the flow (compare with Theorem \ref{thm existence and regularity}-(iv,vi)). This is the well-known Bakry-\'Emery method, which requires establishing the strict stability of the $\V$-constrained second variation
\begin{equation}
  \label{qqe}
  \QQ_\e[\zeta_\e](\vphi)=\int_{\R^n}2\,\e\,|\nabla\vphi|^2
  +\Big\{\frac{W''(\zeta_\e)}\e-\Lambda_\e\,V''(\zeta_\e)\Big\}\,\vphi^2\,,
  \qquad\vphi\in W^{1,2}(\R^n)\,,
\end{equation}
of the Allen--Cahn functional at the minimizer $\zeta_\e=\zeta_{\e,1}$ of $\Psi(\e,1)$ (where we are also setting $\Lambda_\e=\Lambda_{\e,1}$ for brevity). This strict stability result is established in the following theorem, proved in Section \ref{section stability second variation} (whereas the proof of Theorem \ref{theorem bubbling one} is finally discussed in Section \ref{section convergence without bubbling}).

\begin{theorem}[Strict stability of $\QQ_\e$ at $\zeta_\e$]\label{theorem eps stability}
If $n\ge 2$ and $W\in C^{2,1}[0,1]$ satisfies \eqref{W basic} and \eqref{W normalization}, then there exists a positive universal constant $\e_0$ with the following property.

\medskip

For every $\e\in(0,\e_0)$, if $\vphi\in W^{1,2}(\R^n)$ satisfies
\begin{eqnarray}
\label{stability scnd var ortho conditions}
\int_{\R^n}\varphi\,V'(\zeta_\e)=0\,,\qquad \int_{\R^n}\vphi\,\nabla\zeta_\e=0\,,
\end{eqnarray}
then
\begin{equation}
\label{stability scnd var vphi only}
\QQ_\e[\zeta_\e](\vphi)\geq \frac{\e}C\,\int_{\R^n}\varphi^2\,;
\end{equation}
and, moreover, for a constant $C(\e)>0$,
\begin{equation}
  \label{stability scnd var W12}
  \QQ_\e[\zeta_\e](\vphi)\geq \frac1{C(\e)}\,\int_{\R^n}|\nabla\vphi|^2+\varphi^2\,.
\end{equation}
\end{theorem}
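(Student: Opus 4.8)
The plan is to establish \eqref{stability scnd var vphi only} first, since \eqref{stability scnd var W12} will then follow by a routine interpolation argument that I sketch at the end. The strategy for \eqref{stability scnd var vphi only} is to \emph{transfer} the known sharp stability of the Euclidean isoperimetric problem (the quantitative isoperimetric inequality, or equivalently the strict stability of the perimeter's second variation at a ball among volume-preserving normal perturbations) to the diffused setting, using the detailed description of $\zeta_\e=\zeta_{\e,1}$ and $\Lambda_\e$ from \cite{maggi-restrepo} recalled in Section \ref{section preliminaries}. The heuristic is that, after the change of variables $w=\Phi'(\zeta_\e)\,\vphi=\sqrt{W(\zeta_\e)}\,\vphi$ (or some close relative exploiting the Modica--Mortola factorization $\e|\nabla\zeta_\e|^2+W(\zeta_\e)/\e\approx 2|\nabla[\Phi\circ\zeta_\e]|$), the quadratic form $\QQ_\e[\zeta_\e]$ is, to leading order in $\e$, a $\Gamma$-convergence--type rescaling of the perimeter's Jacobi operator on the sphere $\pa B^{(1)}$, whose spectral gap (after removing the translations, which correspond exactly to the first nontrivial spherical harmonics) is strictly positive and dimensional. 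The two orthogonality conditions in \eqref{stability scnd var ortho conditions} are precisely the diffused analogues of the volume constraint ($\int\vphi\,V'(\zeta_\e)=0$, dual to $\frac{d}{dt}\V=0$) and of killing the translation modes ($\int\vphi\,\nabla\zeta_\e=0$, since $\partial_k\zeta_\e$ spans the kernel of $\QQ_\e$ coming from $\R^n$-translation invariance); thus on the orthogonal complement of these one expects a gap.

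Concretely, I would argue by contradiction and compactness in $\e$. Suppose \eqref{stability scnd var vphi only} fails: there are $\e_j\to 0^+$ and $\vphi_j\in W^{1,2}(\R^n)$ with $\int_{\R^n}\vphi_j^2=1$, satisfying \eqref{stability scnd var ortho conditions} for $\zeta_{\e_j}$, but $\QQ_{\e_j}[\zeta_{\e_j}](\vphi_j)\le \e_j/j$. Using the isoperimetric lower bound \eqref{modicamortola}, the convergence $\Lambda_{\e_j}\to 2(n-1)\om_n^{1/n}$, and the known profile estimates for $\zeta_{\e_j}$ (exponential decay away from the interface $\{ \zeta_{\e_j}=1/2\}$, which converges to $\pa B^{(1)}$ after recentering), one first shows the $\vphi_j$ concentrate near the interface — away from it the zeroth-order coefficient $W''(\zeta_{\e_j})/\e_j-\Lambda_{\e_j}V''(\zeta_{\e_j})$ is large and positive (since $W''(0),W''(1)>0$ and $V''$ is bounded), forcing the mass and gradient there to be $\mathrm{o}(1)$. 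Rescaling by $\e_j$ around the interface and using the optimal one-dimensional profile (the heteroclinic $\Phi$), a blow-up/$\Gamma$-convergence argument — of the type pioneered for Allen--Cahn stability, e.g. in the Caffarelli--Cordoba / Tonegawa style — produces a limit $\psi$ in $W^{1,2}(\pa B^{(1)})$ (after integrating out the normal profile variable) with $\int_{\pa B^{(1)}}\psi=0$ and $\int_{\pa B^{(1)}}\psi\,\nu=0$, at which the limiting quadratic form is $\le 0$; but this form is exactly $\int_{\pa B^{(1)}}|\nabla_\tau\psi|^2-(n-1)\psi^2$ (the sphere's Jacobi form, up to normalization by \eqref{W normalization}), whose strict positivity on that subspace is classical. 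This contradicts $\int\psi^2>0$, unless all the mass was lost in the rescaling — which the concentration step rules out. That gives \eqref{stability scnd var vphi only} for $\e<\e_0$.

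The main obstacle, I expect, is making the blow-up rigorous: one must control the \emph{non-local} interplay introduced by the constraint $\int\vphi_j V'(\zeta_{\e_j})=0$ (versus the purely local $\int_{\pa B}\psi=0$ one wants in the limit), and one must ensure no $\vphi_j$-mass escapes to spatial infinity or disperses along the interface at an intermediate scale — here the precise decay and non-degeneracy estimates for $\zeta_{\e,m}$ and $\Lambda_{\e,m}$ from \cite{maggi-restrepo}, together with a Lions-type concentration-compactness dichotomy on the $\e_j$-rescaled functions, are essential. Alternatively, and perhaps more robustly, one can avoid blow-up entirely by a direct computation: expand $\QQ_\e[\zeta_\e](\vphi)$ using the substitution $\vphi=\eta/\sqrt{W(\zeta_\e)}$ on $\{0<\zeta_\e<1\}$, integrate by parts using the ODE $2\e^2\Delta\zeta_\e=W'(\zeta_\e)-\e\Lambda_\e V'(\zeta_\e)$ to produce a manifestly nonnegative bulk term plus a boundary/interface contribution that one estimates against $\int\vphi^2$ using the two orthogonality conditions and the known smallness of $\e\Lambda_\e$; this is the route I would attempt first in the write-up, falling back on the compactness argument for the parts that resist an explicit bound. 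Finally, \eqref{stability scnd var W12} follows from \eqref{stability scnd var vphi only} by writing $\QQ_\e[\zeta_\e](\vphi)=\theta\,\QQ_\e[\zeta_\e](\vphi)+(1-\theta)\,\QQ_\e[\zeta_\e](\vphi)$, using $\QQ_\e[\zeta_\e](\vphi)\ge 2\e\int|\nabla\vphi|^2-\|W''(\zeta_\e)/\e-\Lambda_\e V''(\zeta_\e)\|_\infty\int\vphi^2$ on the first piece and \eqref{stability scnd var vphi only} on the second, then choosing $\theta=\theta(\e)$ so that the negative multiple of $\int\vphi^2$ is absorbed; this produces the (non-universal) constant $C(\e)$ in \eqref{stability scnd var W12}.
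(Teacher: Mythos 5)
Your derivation of \eqref{stability scnd var W12} from \eqref{stability scnd var vphi only} by splitting the form and absorbing the zeroth-order term is correct (the paper does this step by a short contradiction argument; your absorption is equally valid and gives the same $\e$-dependent constant). Your overall philosophy for \eqref{stability scnd var vphi only} — transferring the strict stability of the sphere's Jacobi form, with the two conditions in \eqref{stability scnd var ortho conditions} playing the role of the volume constraint and of orthogonality to translations — is also the philosophy of the paper. But what you have written for \eqref{stability scnd var vphi only} is a program rather than a proof, and the steps you defer are exactly the hard ones.

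Concretely: (i) your blow-up treats all of $\vphi_j$ at once, but a low-energy $\vphi_j$ need not have normal profile proportional to $\zeta_{\e_j}'$, and ``integrating out the normal variable'' silently discards the component of the profile orthogonal to $\zeta_{\e_j}'$; to rule out that the unit $L^2$ mass lives in that component (in which case your limit $\psi$ is trivial and no contradiction results) you need a quantitative one-dimensional spectral gap, and since $\QQ_\e$ is not a sum of localized nonnegative pieces (the coefficient $W''(\zeta_\e)/\e-\Lambda_\e V''(\zeta_\e)$ is negative in the transition layer) your ``concentration step'' — mass away from the interface is $o(1)$ because the potential is large there — is not immediate either: cross contributions from the layer must be controlled before you can localize. (ii) The theorem is claimed for every $\e\in(0,\e_0)$ with a universal $C$, and the sharp constant is only $\e/C$; the paper obtains this uniformity not by a global compactness argument in $\e$ but by an exact identity: with $\psi=\vphi/\zeta_\e'$ and the radially differentiated Euler--Lagrange equation one gets $\QQ_\e[\zeta_\e](\vphi)=2\e\int_{\R^n}(\zeta_\e')^2\{|\nabla\psi|^2-(n-1)|x|^{-2}\psi^2\}$ for every $\e$, and a spherical-harmonics decomposition then diagonalizes the form: the modes $i\ge n+1$ are handled for each fixed $\e$ by the strict stability of the sphere (giving a weighted Hardy-type bound $\ge(\e/C)\int\vphi_i^2|x|^{-2}$), the radial mode is covered by the radial stability lemma of \cite{maggi-restrepo} via the first orthogonality, and only the $n$ translation modes require a compactness argument as $\e\to0$ — and there the problem is one-dimensional, treated by concentration-compactness on rescaled radial profiles together with the orthogonality $\int_{\R^n}\vphi\,\nabla\zeta_\e=0$ and a 1D spectral lemma. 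Note also that your alternative ``direct computation'' with the substitution $\vphi=\eta/\sqrt{W(\zeta_\e)}$ is not exact: it is the choice $\psi=\vphi/\zeta_\e'$ that makes the mixed term integrate by parts cleanly against the ODE, whereas $\sqrt{W(\zeta_\e)}$ differs from $-\e\,\zeta_\e'$ by discrepancy and Lagrange-multiplier terms, which would reintroduce errors requiring the same quantitative control you are trying to avoid. So while your route could plausibly be completed, as written the decisive estimates (normal-profile decomposition and 1D gap, uniformity in $\e$, and the treatment of the near-kernel translation modes) are missing, and these constitute the substance of the theorem.
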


It had already been proved in \cite[Lemma 4.4]{maggi-restrepo} that if $\vphi$ is {\it radial} (and satisfies \eqref{stability scnd var ortho conditions}), then
\begin{equation}
  \label{sssss}
  C\,\QQ_\e[\zeta_\e](\vphi)\geq \int_{\R^n} \e\,|\nabla\vphi|^2+ \frac{\varphi^2}\e\,,
\end{equation}
thus showing that the stability of $\QQ_\e[\zeta_\e]$ among radial variations is much stronger than among generic variations (the costant $\e/C$ in \eqref{stability scnd var vphi only} is sharp as seen in the proof of the theorem itself). The proof of Theorem \ref{theorem eps stability} combines the decomposition of $\vphi$ as a Fourier series in angular/radial variables with a geometric change of variables that allows to relate (on special modes of such decomposition) $\QQ_\e[\zeta_\e]$ with the second variation of the perimeter functional at a ball.

\subsection{Organization of the paper}\label{section organization} Section \ref{section preliminaries} contains a recap of our basic notation, main results from \cite{maggi-restrepo}, and useful properties of the various potentials involved in our analysis. The other sections of the paper are organized as described in the previous section.

\medskip

\noindent {\bf Acknowledgements:} FM has been supported by NSF Grant DMS-2247544. FM and DR have been supported by NSF Grant DMS-2000034 and NSF FRG Grant DMS-1854344.

\section{Background material on the diffused isoperimetric problem}\label{section preliminaries}  In this section we collect some background material that will be repeatedly used in the sequel. In Section \ref{section conv} we set some basic conventions and notation, while in Section \ref{section recap} we recall some key results contained in \cite{maggi-restrepo} and concerning the diffused isoperimetric problem $\Psi(\e,m)$. Finally, in Section \ref{section inequalities} we collect some elementary inequalities concerning the various potentials $W$, $\Phi$, and $V$ that will be often referred to in later proofs, and in Section \ref{section Vdelta} we introduced some regularized volume potentials $V_\de$ which will be employed as a technical device in the proof of Theorem \ref{thm existence and regularity}.

\subsection{Basic conventions}\label{section conv} Throughout the paper $n\in\N$, $n\ge 2$, denotes the dimension of the Euclidean space we are working in, and $W\in C^{2,1}[0,1]$ is a ``double-well potential'' satisfying \eqref{W basic} and \eqref{W normalization}. By {\bf universal constant} we mean a positive real number depending on $W$ and the value of $n$ under consideration. We denote by $C$ a generic universal constant whose value may increase at each of its subsequent appearances. Universal constants that may also depend on $\e>0$ are denoted by $C(\e)$, with the idea that $C(\e)$ may diverge to $+\infty$ as $\e\to 0^+$. Given $k\in\N$, we will write ``$f(\e)={\rm O}(\e^k)$ as $\e\to 0^+$'' if there exists a universal constant $C$ such that $|f(\e)|\le C\,\e^k$ for every $\e\in(0,1/C)$; similar definitions are given for ``${\rm O}(t)$ as $t\to\infty$'', etc.

\subsection{The diffused isoperimetric problem}\label{section recap} Let us recall that for every $\e,m>0$ we set
\[
\Psi(\e,m)=\inf \Big\{ \AC(u):  \V(u)=m\,, u \in W^{1,2}(\R^n;[0,1])\Big\}\,.
\]
Among the basic properties of $\Psi(\e,m)$ we have the scaling law
\begin{equation}
\label{Psi scaling}
\Psi(\e,m)=m^{(n-1)/n}\,\Psi\Big(\frac{\e}{m^{1/n}},1\Big)\,,\qquad\forall\e,m>0\,,
\end{equation}
and the isoperimetric lower bound and $\e\to 0^+$ limit
\begin{eqnarray}
\label{Psi isop lower bound}
\Psi(\e,m)\!\!&>&\!\!2\,c_{\rm iso}(n)\,m^{(n-1)/n}\,,\qquad\forall\e,m>0\,,
\\
\label{Psi limit as eps to zero}
\lim_{\e\to 0^+}\Psi(\e,m)\!\!&=&\!\!2\,c_{\rm iso}(n)\,m^{(n-1)/n}\,,\qquad\forall m>0\,,
\end{eqnarray}
where we have set $c_{\rm iso}(n)=n\,\om_n^{1/n}$. A simple concentration compactness argument (see \cite[Proof of Theorem A.1, steps one and two]{maggi2023hierarchy}) shows that $\Psi(\e,m)$ admits radially symmetric decreasing minimizers for each $\e$ and $m$. Stronger properties are proved in \cite[Theorem 1.1, Theorem 6.1]{maggi-restrepo} under the geometric regime $\e<\!\!<m^{1/n}$:

\medskip

\noindent {\bf Theorem} ${\bf \Psi}$ \cite{maggi-restrepo} {\it If $n\ge 2$ and $W\in C^{2,1}[0,1]$ satisfies \eqref{W basic} and \eqref{W normalization}, then there are positive universal constants $\e_0$ and $\nu_0$ with the following properties:}

\medskip

\noindent ${\bf (i):}$ {\it we have}
\begin{eqnarray}
\label{Psi strict increasing in eps}
&&\mbox{$\Psi(\cdot,m)$ is strictly increasing on $(0,\e_0\,m^{1/n})$}\,,
\\
\label{Psi strict concave in m}
&&\mbox{$\Psi(\e,\cdot)$ is concave on $(0,\infty)$ and strictly concave on $((\e/\e_0)^{n},\infty)$}\,;
\end{eqnarray}
{\it moreover, if $0<\e<\e_0\,m^{1/n}$, then there is a radially symmetric strictly decreasing minimizer $\zeta_{\e,m}$ of $\Psi(\e,m)$ with maximum at the origin having the property that $u$ is a minimizer of $\Psi(\e,m)$ if and only if $u=\tau_{x_0}[\zeta_{\e,m}]$ for some $x_0\in\R^n$; and, for some $\Lambda_{\e,m}>0$, $\zeta_{\e,m}$ satisfies}
\[
2\,\e^2\,\Delta \zeta_{\e,m}=W'(\zeta_{\e,m})-\e\,\Lambda_{\e,m}\,V'(\zeta_{\e,m})\qquad\mbox{on $\R^n$}\,,
\]
{\it where}
\begin{equation}
  \label{limit of Lambda eps}
  \lim_{\e\to 0^+}\,m^{1/n}\,\Lambda_{\e,m}=2\,(n-1)\,\om_n^{1/n}\,.
\end{equation}

\medskip

\noindent {\bf (ii):} {\it if $v\in C^2(\R^n;[0,1])$ satisfies $v(x)\to 0^+$ as $|x|\to\infty$ and solves}
\[
2\,\e^2\,\Delta v=W'(v)-\e\,\l\,V'(v)\qquad\mbox{on $\R^n$}\,,
\]
{\it for some positive $\l$ such that $\e\,\l<\nu_0$, then there exist $x_0\in\R^n$ and $m>0$ such that}
\[
v=\tau_{x_0}[\zeta_{\e,m}]\,,\qquad \e<\e_0\,m^{1/n}\,,\qquad \l=\Lambda_{\e,m}\,.
\]

\subsection{Technical properties of $W$ and related potentials}\label{section inequalities} In the linearization of \eqref{diffused VPMCF} we shall tacitly use the fact that, being $W\in C^{2,1}[0,1]$, we have
\[
\Big|W(r)-W(s)-W'(s)(r-s)-W''(s)\frac{(r-s)^2}2\Big|\le C\,|r-s|^3\,,\qquad\forall r,s\in[0,1]\,.
\]
We also observe that $1/C\le W''\le C$ on $[0,1]$, and that there is a universal constant $\de_0<1/2$ such that
\begin{equation}
	\label{W near the wells}
	\begin{split}
		&\frac{1}C\le \frac{W(r)}{r^2}\,,\frac{W'(r)}r\leq C\quad\hspace{1.5cm}\forall r\in(0,\de_0]\,,
		\\
		&\frac{1}C\le\frac{W(r)}{(1-r)^2}\,, \frac{-W'(r)}{1-r}\leq C\quad\hspace{0.4cm}\forall r\in [1-\de_0,1)\,.
	\end{split}
\end{equation}
Recalling that $\Phi(r)=\int_0^r\sqrt{W}$ and $V(r)=\Phi(r)^{n/(n-1)}$ for $r\in[0,1]$, we use \eqref{W near the wells} to quantify the behaviors of $\Phi$ and $V$ near $r=0$ and $r=1$. By \eqref{W basic}, $\Phi\in C^3_{{\rm loc}}(0,1)$, with
\[
\Phi'=\sqrt{W}\,,\quad\Phi''=\frac{W'}{2\,\sqrt{W}}\,,\quad\Phi'''=\frac{W''}{2\,\sqrt{W}}-\frac{(W')^2}{4\,W^{3/2}}\,,\qquad\mbox{on $(0,1)$}\,.
\]
By \eqref{W near the wells} and \eqref{W normalization} we thus see that $\Phi$ satisfies
\begin{equation}
	\label{Phi near the wells}
	\begin{split}
		&\frac{1}C\le \frac{\Phi(r)}{r^2}\,,\frac{\Phi'(r)}r\,,\Phi''(r)\leq C\,,\qquad\forall r\in(0,\de_0]\,,
		\\
		&\frac{1}C\le \frac{1-\Phi(r)}{(1-r)^2}\,,\frac{\Phi'(r)}{1-r}\,,-\Phi''(r)\le C\,,\qquad\forall r\in[1-\de_0,1)\,.
	\end{split}
\end{equation}
By exploiting \eqref{Phi near the wells} and setting for brevity $a=W''(0)$, we see that, as $r\to 0^+$,
\begin{eqnarray*}
	\Phi'''&=&\frac{2\,W''\,W-(W')^2}{4\,W^{3/2}}
	=\frac{2\,(a+{\rm O}(r))\,(a\,(r^2/2)+{\rm O}(r^3))-(a\,r+{\rm O}(r^2))^2}{4\,(a\,(r^2/2)+{\rm O}(r^3))^{3/2}}
	\\
	&=&\frac{{\rm O}(r^3)}{4\,a^{3/2}\,r^3+{\rm o}(r^3)}\,,
\end{eqnarray*}
and by a similar computation for $r\to 1^-$, we find
\begin{equation}
	\label{Phi third}
	|\Phi'''|\le C\quad\mbox{on $(0,\de_0)\cup(1-\de_0,1)$}\,.
\end{equation}
By \eqref{Phi near the wells} and \eqref{Phi third} we see that $\Phi\in C^{2,1}[0,1]$ with a universal estimate on its $C^{2,1}[0,1]$-norm: in particular,
\begin{equation}\label{Phi second order taylor}
	\Big|\Phi(r)-\Phi(s)-\Phi'(s)(r-s)-\Phi''(s)\frac{(r-s)^2}2\Big|\le C\,|r-s|^3\,,\qquad\forall r,s\in(0,1)\,.
\end{equation}
Since $V=\Phi^{1+\a}$ for $\a=1/(n-1)\in(0,1]$ (recall that $n\ge 2$) and $\Phi(r)=0$ if and only if $r=0$, we easily see that $V\in C^3_{{\rm loc}}(0,1)$, with
\begin{eqnarray*}
	&&V'=(1+\a)\,\Phi^\a\,\Phi'\,,\qquad V''=(1+\a)\Big\{\a\,\frac{(\Phi')^2}{\Phi^{1-\a}}+\Phi^\a\,\Phi''\Big\}\,,
	\\
	&&|V'''|\le C(\a)\,\Big\{\frac{(\Phi')^3}{\Phi^{2-\a}}+\frac{\Phi'\,|\Phi''|}{\Phi^{1-\a}}+\Phi^\a\,|\Phi'''|\Big\}\,.
\end{eqnarray*}
By \eqref{Phi second order taylor}, and keeping track of the sign of $\Phi''$ and of the fact that negative powers of $\Phi(r)$ are large only near $r=0$, but are bounded near $r=1$, we find that
\begin{equation}
	\label{V near the wells}
	\begin{split}
		&\frac1C\le \frac{V(r)}{r^{2+2\a}}\,,\frac{V'(r)}{r^{1+2\a}}\,,\frac{V''(r)}{r^{2\a}}\leq C\,,\hspace{0.7cm}\quad |V'''(r)|\le\frac{C}{r^{1-2\a}}\qquad\forall r\in(0,\de_0]\,,
		\\
		&\frac1C\le  \frac{1-V(r)}{(1-r)^2}\,,\frac{V'(r)}{1-r}\le C\,,\quad |V''(r)|\,,|V'''(r)|\le C\,,\qquad\forall r\in[1-\de_0,1)\,.
	\end{split}
\end{equation}
In particular, $V''(r)\to\infty$ explodes as $r\to 0^+$. However, $V\in C^{2,\g(n)}[0,1]$ (with $\g(n)=\min\{1,2/(n-1)\}\in(0,1]$) and we have the second order Taylor expansion
\[
\Big|V(r)-V(s)-V'(s)\,(r-s)-V''(s)\frac{(r-s)^2}2\Big|\le C\,|r-s|^{2+\g(n)}\,,\qquad\forall r,s\in(0,1)\,.
\]
As much as the analogous expansion for $W$, this formula will be repeatedly used in linearizing \eqref{diffused VPMCF}.

\subsection{Regularized volume potentials}\label{section Vdelta} As already discussed at the beginning of Section \ref{section main theorems}, the non-Lipschitzianity of $V''$ near $0$ causes several technical problems, that call for the introduction of regularized volume potentials $V_\de:[0,1]\to[0,1]$ ($\de>0$) such that $V_\delta \in C^{2,1}[0,1]$, with
\[
V_\delta(0)=0\,,\qquad V_\delta'(0)= V_\delta'(1)=0\,,\qquad\lim_{\de\to 0^+}\|V_\de-V\|_{C^2[0,1]}=0\,.
\]
We will of course have
\[
\lim_{\de\to 0^+}\Lip(V_\de'';[0,1])=+\infty\,,\qquad \sup_{\de>0}\,[V_\de'']_{C^{0,\g(n)}}<\infty\,.
\]
Such potentials $V_\de$ can be defined by first considering a family $\{\rho_\de\}_{\de>0}$ of smooth mollifiers on $\R$ such that $\spt\rho_\de\cc (-\de^2,\de^2)$, and then by setting
\begin{equation}\label{eq delta approximation}
V_\delta=\rho_{\delta}\star\Big(L_\delta\circ\Phi\Big)^{n/(n-1)}\,,
\end{equation}
where
\begin{equation*}
	L_\delta(r)=
\left\{
\begin{split}
  &0\,,\qquad\hspace{1cm} r\in[0,\de]\,,
  \\
  &\frac{r-\de}{1-2\,\de}\,,\qquad r\in[\de,1-\de]\,,
  \\
  &1\,,\qquad\hspace{1cm} r\in[1-\de,1]\, .
\end{split}
\right .
\end{equation*}
By $W>0$ on $(0,1)$, \eqref{W near the wells} and \eqref{V near the wells}, and up to further decreasing the value of $\delta_0$ introduced above, we have
\[
V_\delta(r)\le C\,r^2\le C\, W(r)\,, \qquad V_\delta'(r)\leq C\, r\,,\qquad\forall r\in(0,1-\de_0)\,,
\]
for every $\delta \in \big[0,\delta_0]$, as well as (compare with \eqref{V near the wells})
\begin{equation}
	\label{V limitata dal basso via da zero}
	V_\delta(r)\ge\frac1C\,,\qquad V_\delta'(r)\leq C\,(1-r)\, \qquad\forall r\in(\de_0,1)\,.
\end{equation}

\section{Estimates for the Lagrange multiplier functional}\label{section estimates for lambda} This section is devoted to the analysis of the Lagrange multiplier functional $\l_\e$, defined with values in $[0,\infty)$ on any given function $v\in W^{1,2}(\R^n;[0,1])$ with $|\{0<v<1\}|>0$ (and assumption that guarantees $\int_{\R^n}V'(u)^2>0$) by setting
\begin{equation}
\label{lambda eps u proof}
\lambda_\e[v]
=  \frac{\int_{\R^n}2\,\e^2\, |\nabla v|^2\,V''(v) +  W'(v)\,V'(v)}{\e\,\int_{\R^n}V'(v)^2}\,.
\end{equation}
In particular, we address the Lipschitz continuity properties of $\l_\e$ in the Banach space  $(X,\|\cdot\|)$ defined by
\[
X=(C^0\cap W^{1,2})(\R^n; [0,1])\,,\qquad \|u\|_{X}= \|u\|_{W^{1,2}(\R^n)}+\|u\|_{C^0(\R^n)}\,,
\]
that is the space we shall use to construct mild solutions of \eqref{diffused VPMCF}. In fact, we shall also need to consider the Lagrange multiplier functionals
\begin{equation}
  \label{lambda eps delta u proof}
\lambda_{\e,\de}[u]=\frac{\int_{\R^n}2\,\e^2\,|\nabla u|^2\,V_\de''(u) + W'(u)\,V_\de'(u)}{\e\,\int_{\R^n}V_\de'(u)^2}\,,
\end{equation}
obtained by replacing $V$ with the regularized volume potentials $V_\de$ introduced in the previous section. We shall also set $V_0=V$, $\V_0=\V$, and use the notation
\[
\ac_\e(u;\Om)=\int_\Om\e\,|\nabla u|^2+\frac{W(u)}\e\,,\qquad \V_\de(u;\Om)=\int_\Om V_\de(u)\,,
\]
for the localization to a Borel set $\Om\subset\R^n$ of the functionals $\AC$ and $\V_\de$.

\begin{theorem}
If $n\ge 2$ and $W\in C^{2,1}[0,1]$ satisfies \eqref{W basic} and \eqref{W normalization}, then there exist  positive universal constants $\e_0$ and $\de_0$ with the following properties:

\medskip

\noindent {\bf (i):} if $u\in W^{1,2}(\R^n;[0,1])$, $\e>0$, and $\de\in[0,\de_0]$, then
\begin{equation}
  \label{eq l2 bound}
\int_{\R^n}u^2\le C\,\big\{\e\, \AC(u)+ \V_\delta(u)\big\}\,,
\end{equation}
and, with an $\e$-dependent universal constant $C(\e)$,
\begin{eqnarray}
\label{vdelta prime squared below}
C(\e)\,\int_{\R^n}|\nabla u|^2\,\int_{\R^n}V_\de'(u)^2&\ge&\min\Big\{1,\frac{\V_\de(u)}{\AC(u)}\Big\}^{2\,n}\,,
\\
\label{eq upper bound lambda}
|\l_{\e,\de}[u]|&\leq&C(\e)\,\frac{\AC(u)^{2n+1}}{\V_\delta(u)^{2n}}\,\int_{\R^n}|\nabla u|^2\,.
\end{eqnarray}
Furthermore, if $\delta=0$ and $\e\in(0,\e_0)$, then \eqref{vdelta prime squared below} and \eqref{eq upper bound lambda} hold with $C$ in place of $C(\e)$.

\medskip

\noindent {\bf (ii):} if $u, v \in X \setminus \{0\}$, $\e>0$, and $\max\{\|u\|_{X}\,,\|v\|_X\}\leq C(\e)$, then
\begin{eqnarray}
\label{eq Lip lambda}
|\l_{\e,\de}[u]-\l_{\e,\de}[v]|\!\!&\leq&\!\!C(\e,\delta)\, \frac{\| u-v\|_{X}}
{\min\{1,\V_\de(u)^{2n}\}\,\min\{1,\V_\de(v)^{2n}\}}\,,\quad\forall\de\in(0,\de_0]\,,\,\,\,
\\
\label{eq holder}
|\l_{\e,\de}[u]-\l_{\e,\de}[v]|\!\!&\leq&\!\!C(\e)\, \frac{\| u-v\|_{X}^{\gamma(n)}}{\min\{1,\V_\de(u)^{2n}\}\,\min\{1,\V_\de(v)^{2n}\}}\,,\qquad\forall\de\in[0,\de_0]\,,
\\
\label{eq holder 2}
|\l_{\e,\de}[u]-\l_{\e}[u]|\!\!&\leq&\!\!C(\e)\, \frac{\|V_\de-V\|_{C^2[0,1]}}{\min\{1,\V_\de(u)^{2n}\}\,\min\{1,\V(u)^{2n}\}}\,,\qquad\forall\de\in[0,\de_0]\,,
\end{eqnarray}
where $\gamma(n)= \min\{1, 2/(n-1)\}$; and if $u, v \in W^{2,2}(\R^n;[0,1])\setminus\{0\}$, then
\begin{equation}\label{eq lipschitz}
	|\l_{\e,\de}[u]-\l_{\e,\de}[v]|\leq
C(\e) \frac{\max\{\|u\|_{W^{2,2}},\|v\|_{W^{2,2}}\}\,\| u-v\|_{W^{1,2}(\R^n)}}{\min\{1,\V_\de(u)^{2n}\}\,\min\{1,\V_\de(v)^{2n}\}}\,,\qquad\forall\de\in[0,\de_0]\,.
\end{equation}
\end{theorem}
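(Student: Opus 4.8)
The plan is to make the nucleation-type inequality \eqref{vdelta prime squared below} the technical core of part (i), to deduce \eqref{eq l2 bound} and \eqref{eq upper bound lambda} around it, and then to settle all of part (ii) by one ``quotient rule'' estimate once the denominators are known to be bounded below.

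Inequality \eqref{eq l2 bound} is elementary: split $\R^n=\{u\le 1-\de_0\}\cup\{u>1-\de_0\}$; on the first set $u^2\le C\,W(u)=C\,\e\,\big(W(u)/\e\big)$ (by $W(r)\ge r^2/C$ near $0$ and positivity of $W$ on $[\de_0,1-\de_0]$), integrating to $\le C\,\e\,\AC(u)$; on the second set $u^2\le 1\le C\,V_\de(u)$ by \eqref{V limitata dal basso via da zero}, integrating to $\le C\,\V_\de(u)$. For \eqref{vdelta prime squared below} set $m=\V_\de(u)$, $E=\AC(u)$ (both finite and positive, else nothing to prove). Since $V_\de(r)\le C\,r^2\le C\,W(r)$ on $\{u<\de_0\}$, one has $\int_{\{u<\de_0\}}V_\de(u)\le C\,\e\,E$, whence
\[
|\{u\ge\de_0\}|\ \ge\ m-C\,\e\,E\,.
\]
If $m\le 2C\,\e\,E$ then $\min\{1,m/E\}\le 2C\,\e$ is a fixed power of $\e$ and the inequality reduces to a crude Cauchy--Schwarz on the transition set; if $m>2C\,\e\,E$ then $|\{u\ge\de_0\}|\ge m/2$, and I would apply the coarea formula and the Euclidean isoperimetric inequality to the superlevel sets of $V_\de\circ u$ (which contain $\{u\ge\de_0\}$ for all thresholds below the universal value $V_\de(\de_0)>0$) together with the Modica--Mortola bound $\AC(u)\ge 2\,|D[\Phi\circ u]|(\R^n)$, which controls $|\{\de_0<u<1-\de_0\}|$ by $C\,\e\,E$; this is a nucleation-lemma argument in the spirit of \cite{Almgren76}. \emph{This step is the main obstacle.} The subtlety is that $m$ may be much larger than $E$ (take $u$ close to the characteristic function of a large ball), so one must not bound $\int|\nabla u|^2\int V_\de'(u)^2$ simply by $\big(|D[V_\de\circ u]|(\R^n)\big)^2$; one has to exploit that $\int|\nabla u|^2$ is itself comparable to $E/\e$ precisely in that regime, by applying Cauchy--Schwarz to $\int|\nabla u|$ and to $\int|V_\de'(u)|$ only over the transition set of measure $\le C\,\e\,E$. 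The case $\de=0$, $\e<\e_0$ with a universal $C$ uses the strict isoperimetric-type bound \eqref{Psi isop lower bound} from \cite{maggi-restrepo}.

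Granting \eqref{vdelta prime squared below}, estimate \eqref{eq upper bound lambda} follows by bounding the numerator of $\lambda_{\e,\de}[u]$: since $|V_\de''|\le C$ one has $\big|\int 2\e^2|\nabla u|^2V_\de''(u)\big|\le C\,\e^2\int|\nabla u|^2$, while the quadratic decay of $W'(u)V_\de'(u)$ near the wells together with \eqref{eq l2 bound} and $|\{u>1-\de_0\}|\le C\,m$ gives $\int|W'(u)V_\de'(u)|\le C\,(\e\,E+m)$; dividing by $\e\int V_\de'(u)^2$, using \eqref{vdelta prime squared below} on the denominator and $\e\int|\nabla u|^2\le E$, and separating the cases $m\le E$ and $m>E$, yields $|\lambda_{\e,\de}[u]|\le C(\e)\,E^{2n+1}\,m^{-2n}\int|\nabla u|^2$.

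For part (ii), write $\lambda_{\e,\de}[w]=N_\de(w)/D_\de(w)$ with $N_\de(w)=\int 2\e^2|\nabla w|^2V_\de''(w)+W'(w)V_\de'(w)$ and $D_\de(w)=\e\int V_\de'(w)^2$. Combining \eqref{vdelta prime squared below} with $\int|\nabla u|^2\le\AC(u)/\e$ and the a priori bound $\AC(u)\le C(\e)$ valid on a fixed ball of $X$, one gets $D_\de(u)\ge\min\{1,\V_\de(u)^{2n}\}/C(\e)$; since numerators and denominators are also bounded above there, all four estimates reduce, through
\[
|\lambda_{\e,\de}[u]-\lambda_{\e,\de}[v]|\ \le\ \frac{|N_\de(u)-N_\de(v)|}{D_\de(u)}+\frac{|N_\de(v)|\,|D_\de(u)-D_\de(v)|}{D_\de(u)\,D_\de(v)}\,,
\]
to estimating $|N_\de(u)-N_\de(v)|$ and $|D_\de(u)-D_\de(v)|$ in the relevant norm. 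For $D_\de$: $V_\de'(u)^2-V_\de'(v)^2=(V_\de'(u)+V_\de'(v))\,(V_\de'(u)-V_\de'(v))$ with $V_\de'$ Lipschitz gives an $L^2$-controlled difference, hence control by $\|u-v\|_X$. For $N_\de$: the term $W'V_\de'$ is handled the same way (its integrand inherits quadratic decay near the wells, so \eqref{eq l2 bound} yields $L^1$-continuity), while the gradient term splits as $(|\nabla u|^2-|\nabla v|^2)\,V_\de''(u)+|\nabla v|^2\,(V_\de''(u)-V_\de''(v))$, the first piece being $\le\|\nabla u+\nabla v\|_{L^2}\|\nabla u-\nabla v\|_{L^2}$ and the second $\le\||\nabla v|^2\|_{L^1}\,[V_\de'']_{C^{0,\gamma(n)}}\,\|u-v\|_{C^0}^{\gamma(n)}$. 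Here $\gamma(n)=\min\{1,2/(n-1)\}$ enters as the Hölder modulus of $V_\de''$, inherited uniformly in $\de$ from the behaviour of $V''$ in \eqref{V near the wells}: for $\de>0$ one uses that $V_\de''$ is genuinely Lipschitz (with constant blowing up as $\de\to0$) to get \eqref{eq Lip lambda}, whereas uniformly in $\de\in[0,\de_0]$ one only obtains the $\gamma(n)$-Hölder estimate \eqref{eq holder}; to recover the full Lipschitz estimate \eqref{eq lipschitz} one integrates the gradient term by parts, trading the Hölder modulus of $V_\de''$ for one derivative of $u$ and hence for the factor $\max\{\|u\|_{W^{2,2}},\|v\|_{W^{2,2}}\}$. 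Finally \eqref{eq holder 2} is the same computation with $V_\de-V$, measured in $C^2[0,1]$, playing the role of $u-v$.
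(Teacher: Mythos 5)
Your treatment of \eqref{eq l2 bound} and of all of part (ii) matches the paper's proof (quotient rule, lower bound on the denominator via \eqref{vdelta prime squared below} plus $\AC(u)\le \|u\|_{W^{1,2}}^2/\e$, Lipschitz $V_\de''$ for \eqref{eq Lip lambda}, the uniform $C^{0,\g(n)}$ bound for \eqref{eq holder}, integration by parts trading the H\"older modulus of $V_\de''$ for $\Delta u$ in \eqref{eq lipschitz}, and the $\|V_\de-V\|_{C^2}$ comparison for \eqref{eq holder 2}). The genuine gap is exactly where you flag ``the main obstacle'': you do not have a proof of \eqref{vdelta prime squared below}. Your dichotomy handles, at best, the regime $\V_\de(u)\gtrsim \e\,\AC(u)$, where the containment $\{V_\de(u)>t\}\supset\{u\ge\de_0\}$ for $t<V_\de(\de_0)$ plus coarea and the isoperimetric inequality does give a usable lower bound on $\big(\int V_\de'(u)\,|\nabla u|\big)^2$. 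But in the complementary regime $\V_\de(u)\le C\,\e\,\AC(u)$ the set $\{u\ge\de_0\}$ may be empty (think of $u$ everywhere below $\de_0$ and spread out), the superlevel sets of $V_\de\circ u$ can have arbitrarily large measure while carrying all of $\V_\de(u)$, and ``a crude Cauchy--Schwarz on the transition set'' cannot produce the required \emph{lower} bound on $\int|\nabla u|^2\int V_\de'(u)^2$ in terms of $(\V_\de(u)/\AC(u))^{2n}$: Cauchy--Schwarz over a set of measure $\le C\e\,\AC(u)$ only gives upper bounds on $\int|\nabla u|$ and $\int V_\de'(u)$ there, and your remark that $\int|\nabla u|^2$ is ``comparable to $\AC(u)/\e$'' is only an upper bound in general. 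This is precisely the case for which the paper builds its machinery: a divergence-theorem (monotonicity) identity
\[
(n-1)\int_{\R^n}\frac{V_\de(u)}{|x-x_0|}\,dx\;\le\;\|V_\de'(u)\|_{L^2(\R^n)}\,\|\nabla u\|_{L^2(\R^n)}\,,
\]
valid for every center $x_0$ (cf.\ \eqref{eq upper bound V}), combined with a nucleation lemma over a lattice of balls of radius comparable to $\max\{1,\e\}$ (cf.\ \eqref{non vanishing estimate}--\eqref{non vanishing estimate 2}) which produces one ball $B_R(x_0)$ with $\fint_{B_R(x_0)}V_\de(u)\gtrsim_{\e}\min\{1,\V_\de(u)/\AC(u)\}^{n}$; the nucleation lemma in turn rests on local diffused relative-isoperimetric/Sobolev inequalities in balls, conclusions (a)--(b) of the paper's step one, and it is the smallness threshold $\eta_0(\e/r)^{2n}$ in \eqref{eq initial volume bound} that is the source of the exponent $2n$ in \eqref{vdelta prime squared below}. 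None of these ingredients (the divergence identity, the covering by a lattice, the local inequalities with their thresholds) appear in your plan, and without an equivalent substitute the hard regime is simply not covered; invoking Almgren ``in spirit'' does not fill this in.

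Two smaller points. First, the refinement ``$\de=0$, $\e\in(0,\e_0)$ with a universal constant'' does not come from \eqref{Psi isop lower bound}: in the paper it comes from the fact that for $V$ (unlike $V_\de$) the local inequality (b) needs no $(\e/r)^{2n}$-smallness, only $\fint_{B_r}V(u)\le 1/2$ and $\e/r<\s_0$, so the lattice estimate \eqref{non vanishing estimate 2} carries the factor $\max\{1,\e\}$, which is universal for $\e\le\e_0$. Second, for \eqref{eq upper bound lambda} the paper does not need the case analysis $m\le E$ versus $m>E$ you describe: it simply bounds the numerator by $C\,\AC(u)$ using $|V_\de''|\le C$ and $|W'V_\de'|\le C\,W$, and divides by the denominator bounded below through \eqref{vdelta prime squared below}; your extra volume-dependent numerator estimate is unnecessary and the algebra you outline for recovering the exact form of \eqref{eq upper bound lambda} is not spelled out.
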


\begin{proof} {\it Step one, diffused relative isoperimetry and nucleation}: We prove two relative isoperimetric inequalities in balls  in the diffused setting and a consequent nucleation type lemma modeled after \cite[Lemma 2.1]{morini-ponsiglione-spadaro}. This kind of result is in turn inspired by a tool introduced by Almgren \cite{Almgren76} in the study of isoperimetric clusters, see \cite[Lemma 29.10]{maggiBOOK}. More precisely, we prove the existence of universal constants $\eta_0$, $\s_0$ and $C$ with the following properties:

\medskip

\noindent {\bf (a):} if $\de\in[0,\de_0]$, $\e,r>0$, and if $u\in W^{1,2}(\R^n;[0,1])$ satisfies
\begin{equation}\label{eq initial volume bound}
\fint_{B_r} V_\delta(u)\leq \eta_0\,\Big(\frac{\e}{r}\Big)^{2\,n},
\end{equation}
then
\begin{equation}\label{eq sobolevballs}
C\,\AC(u;B_r) \geq  \frac{\e}{r}\,\V_\delta(u;B_r)^{(n-1)/n}\,;
\end{equation}

\medskip

\noindent {\bf (b):} if $\e,r>0$ are such that $\e/r<\s_0$, and if $u\in W^{1,2}(\R^n;[0,1])$ satisfies
\begin{equation}\label{eq initial volume bound zero}
\fint_{B_r} V(u)\leq \frac12\,,
\end{equation}
then
\begin{equation}\label{eq sobolevballs zero}
C\,\AC(u;B_r)\geq \V(u;B_r)^{(n-1)/n}\,.
\end{equation}

\medskip

\noindent {\bf (c):} if $\e>0$ and
\[
\B=\Big\{B_{\sqrt{n+1}\,R/2}(R\,z):z\in\Z^n\Big\}\,,\qquad R=\frac{\max\{1,\e\}}{\s_0\,(\sqrt{n+1}/2)}\,,
\]
then for every $u\in W^{1,2}(\R^n;[0,1])$ we have
\begin{eqnarray}
\label{non vanishing estimate}
C\,\frac{\max\{1,\e\}}{\min\{1,\e^{2\,n}\}}\,\sup_{B\in \,\mathcal{B}} \fint_{B} V_\delta(u)
&\geq&
\min\Big\{1,\Big(\frac{\V_\delta(u)}{\AC(u)}\Big)^{n}\Big\}\,,\qquad\forall \de\in(0,\de_0]\,,
\\
\label{non vanishing estimate 2}
C\,\max\{1,\e\}\,\sup_{B\in\, \mathcal{B}} \fint_{B} V(u)
&\geq&
\min\Big\{1,\Big(\frac{\V(u)}{\AC(u)}\Big)^{n}\Big\}\,.
\end{eqnarray}

\medskip

\noindent {\it We first derive conclusion (c) from conclusions (a) and (b)}: Let $u\in W^{1,2}(\R^n;[0,1])$. In proving \eqref{non vanishing estimate 2} we can assume without loss of generality that
\[
\sup_{B\in\B}\fint_B V(u)\le\frac12\,.
\]
In particular, since the choice of $R$ is such that $\e/(\sqrt{n+1}R/2)<\s_0$ for every $\e>0$, we deduce from conclusion (b) that \eqref{eq sobolevballs zero} holds for every $B\in\B$. The corresponding bounds can be used together with the fact that $\B$ is a covering of $\R^n$ with finite overlapping (depending only on the dimension $n$) to conclude that
\begin{eqnarray}\label{as in}
C(n)\,\AC(u)&\ge&\sum_{B\in\B}\AC(u;B)\ge\frac1C\, \sum_{B\in\B}\V(u;B)^{(n-1)/n}
\\\nonumber
&\ge& \frac1C\, \frac{\sum_{B\in\B}\V(u;B)}{\sup_{B\in\B}\V(u;B)^{1/n}}
\ge\frac1C\, \frac{\V(u)}{\sup_{B\in\B}\Big(\int_B V(u)\Big)^{1/n}}\,.
\end{eqnarray}
Thanks to $|B|^{1/n}\le C\,\max\{1,\e\}$ for every $B\in\B$, \eqref{as in} implies \eqref{non vanishing estimate 2}. To prove \eqref{non vanishing estimate}, let $\de\in(0,\de_0]$, and let us assume without loss of generality that
\begin{equation}
  \label{hold}
  \sup_{B\in\B}\fint_B V_\de(u)\le\eta_0\,\Big(\frac{\e}{\sqrt{n+1}R/2}\Big)^{2\,n}\,,
\end{equation}
then we can apply \eqref{eq sobolevballs} to each $B\in\B$, and conclude as in \eqref{as in} that
\[
C(n)\,\AC(u)\ge\frac{\e}{C\,R}\, \frac{\V_\de(u)}{\sup_{B\in\B}\V_\de(u;B)^{1/n}}\ge\frac{\min\{1,\e\}}C\,\frac{\V_\de(u)}{\sup_{B\in\B}\V_\de(u;B)^{1/n}}\,.
\]
In particular, \eqref{non vanishing estimate} follows by taking again into account that $\e/R=C\,\max\{1,\e\}$.

\medskip

\noindent {\it We now prove conclusions (a) and (b)}: Up to a rescaling, we can take $r=1$ in both conclusions. To prove conclusion (a), we notice that,
%: indeed, setting
%\[
%\eta_tu(x)=u(t^{1/n}\,x)\,,\qquad x\in\R^n\,,t>0\,,
%\]
%we have
%\begin{eqnarray}\label{scaling potential}
%	\int_{\R^n}f(\eta_tu)&=&\frac1{t}\,\int_{\R^n} f(u)\,,\qquad\int_{\R^n}|\nabla(\eta_tu)|^2=t^{(2/n)-1}\,\int_{\R^n}|\nabla u|^2\,,
%	\\
%	\label{scaling AC}
%	\ac_\e(\eta_tu)&=&\e\,t^{(2/n)-1}\,\int_{\R^n}|\nabla u|^2+\frac1{\e\,t}\int_{\R^n}W(u)=\frac{\ac_{\e\,t^{1/n}}(u)}{t^{(n-1)/n}}\,.
%\end{eqnarray}
%whenever $f:\R\to\R$ is continuous and $u\in W^{1,2}(\R^n)$.
%
%\medskip
by \eqref{eq initial volume bound} and \eqref{V limitata dal basso via da zero},
\begin{equation}\label{eq upper bound level set}
\eta_0\,\e^{2\,n} \geq 	\frac1{\om_n}\,\int_{B_1 \cap \{u \geq 1/2\}} V_\delta(u)\geq \frac{|B_1 \cap \{u \geq 1/2\}|}{C}\,.
\end{equation}
Since $W(r)\ge r^2/C$ for $r\in[0,1/2]$, by \eqref{V limitata dal basso via da zero}, \eqref{eq upper bound level set}, and the H\"older inequality, we get
\begin{eqnarray}\notag
\int_{B_1} u^2 &=& \int_{B_1\cap\{u\leq 1/2\}} u^2 +\int_{B_1\cap \{u\geq  1/2\}} u^2\\\notag
&\leq& C\int_{B_1} W(u) + C\,\big|B_1\cap \{u\geq  1/2\}\big|^{1/n}\,\Big(\int_{B_1} u^{2 n/(n-1)} \Big)^{(n-1)/n}\\\label{eq upper bound l2 norm}
&\leq& C\int_{B_1} W(u) + C\,\eta_0^{1/n}\,\e^2\,\Big(\int_{B_1} u^{2 n/(n-1)} \Big)^{(n-1)/n}\,.
\end{eqnarray}
By combining \eqref{eq upper bound l2 norm} with the embedding of $L^{2n/(n-1)}(B_1)$ into $W^{1,2}(B_1)$, we find that
\begin{eqnarray*}
	\int_{B_{1}} \e\, |\nabla u|^2 + \frac{W(u)}{\e} &\geq& \int_{B_1} \e\,
|\nabla u|^2+\frac{u^2}{C \e}
-C\,\eta_0^{1/n}\,\e\,\Big(\int_{B_1} u^{2 n/(n-1)} \Big)^{(n-1)/n}
\\
&\geq& \frac{\e}C\,\Big(\int_{B_1} u^{2 n/(n-1)} \Big)^{(n-1)/n}-
C\,\eta_0^{1/n}\,\e\,\Big(\int_{B_1} u^{2 n/(n-1)} \Big)^{(n-1)/n}
\\
	&\geq& \frac{\e}C\,\Big(\int_{B_1} u^{2 n/(n-1)} \Big)^{(n-1)/n}\ge \frac{\e}C\,\Big(\int_{B_1}V_\de(u)\Big)^{(n-1)/n}\,,
\end{eqnarray*}
provided $\eta_0$ is a sufficiently small universal constant, and where we have used that $V_\delta(r) \leq C\, r^{2n/(n-1)}$ for every $r\in[0,1]$ and $\delta\in[0,\de_0]$. Having proved conclusion (a), we now prove conclusion (b). Arguing by contradiction, we can assume the existence of $\e_k\to 0^+$ and $\{u_k\}_k$ in $W^{1,2}(B_1;[0,1])$ such that, for all $k\in\N$ and setting for brevity
\[
M_k=\int_{B_1}V(u_k)=\int_{B_1}\Phi(u_k)^p\,,\qquad p=\frac{n}{n-1}\,,
\]
we have $M_k\le \om_n/2$ and
\begin{eqnarray}
\label{eq contradictionballs}
\int_{B_{1}} \e_k |\nabla u_k|^2 + \frac{W(u_k)}{\e_k}\le \frac{M_k^{1/p}}k\,,
\end{eqnarray}
for every $k\in\N$. Combining  $|\nabla(\Phi\circ u)|=|\nabla u|\,\sqrt{W(u)}$ with Young's inequality as in \eqref{modicamortola}, we deduce from \eqref{eq contradictionballs} and the $BV$-Poincar\'e inequality \cite[(3.41)]{AFP} that
\begin{equation}\label{eq seminorm}
\frac{1}{C}\Big(\int_{B_1}|\Phi(u_k)- t_k|^p\Big)^{1/p}\le\int_{B_{1}}|\nabla (\Phi\circ u_k)|\le\frac{C}k\,M_k^{1/p}\,,
\end{equation}
where $t_k=\om_n^{-1}\int_{B_1}\Phi(u_k)$. In particular, there is $c\in[0,1]$ such that, up to extracting a subsequence, $u_k\to c$ in $L^1(B_1)$ and a.e. in $B_1$ as $k\to\infty$. Since \eqref{eq contradictionballs} implies $\int_{B_1}W(u_k)\to 0$ as $k\to\infty$, by Fatou's lemma we find
\[
\int_{B_1} W(c)\leq \liminf_{k\to \infty} \int_{B_1} W(u_k)=0\,.
\]
In particular, $c\in\{0,1\}$. Since $c=1$ would contradict $M_k\le\om_n/2$ for every $k$, we conclude that $c=0$, and hence, thanks also to $0\le u_k\le 1$, that $\Phi(u_k)\to 0$ in $L^p(B_1)$ (i.e., $M_k\to 0$) as $k\to\infty$. On noticing that $0\le t_k\le C\,M_k^{1/p}$, we deduce by \eqref{eq seminorm} that
\begin{eqnarray}\label{eq convergence constant}
\|\Phi(u_k)-(M_k/\om_n)^{1/p}\|_{L^p(B_1)}\leq\|t_k-(M_k/\om_n)^{1/p}\|_{L^p(B_1)}+\frac{C}k\,M_k^{1/p}\le C\,M_k^{1/p}\,,
\end{eqnarray}
and, in particular, that
\[
\lim_{k\to\infty}\Big|B_1\cap\Big\{\Phi(u_k)\le(M_k/2\om_n)^{1/p}\Big\}\Big|=0\,.
\]
Since $\Phi(u_k)\to 0$ in $L^p(B_1)$ implies $|B_1\cap\{\Phi(u_k)>1/2\}|\to 0$ as $k\to\infty$, we conclude that
\begin{equation}\label{eq measuredensity}
\Big|B_1\cap\Big\{\frac12\ge\Phi(u_k)\ge(M_k/2\om_n)^{1/p}\Big\}\Big|\ge\frac{|B_1|}2\,,
\end{equation}
for $k$ large enough. In particular, thanks to $\Phi\le C\, W$ on $[0,1/2]$ and to \eqref{eq contradictionballs}, for $k$ large enough we have
\begin{equation}
\frac{M_k^{1/p}}{C\, \e_k} \leq \frac{1}{\e_k} \int_{B_1\cap\{1/2\ge\Phi(u_k)\ge(M_k/2\om_n)^{1/p}\}}\Phi(u_k)\le
\frac1{\e_k}\int_{B_1} W(u_k) \leq \frac{M_k^{1/p}}k\,,
\end{equation}
that leads to a contradiction as $k\to\infty$.

\medskip

\noindent {\it Step two:} We prove statement (a). To prove \eqref{eq l2 bound} it suffices to recall that $W(r)\geq  r^2/C$ for $r\in [0,1/2]$ and $V_\delta(r)\geq r^2/C$ for $r\in[1/2,1]$, so that
\[
\int_{\R^n}u^2=\int_{\{u\le 1/2\}}u^2+\int_{\{u> 1/2\}}u^2\le C\,\int_{\R^n}W(u)+V_\de(u)\,.
\]
To prove \eqref{eq upper bound lambda} we first notice that $|V_\de''(r)|\le C$ and $|W'(r)\,V'_\delta(r)|\leq C\, W(r)$ for every $r\in[0,1]$ (and every $\de\in[0,\de_0])$, so that
\begin{equation}
  \label{kurt 1}
  \Big|\int_{\R^n}2\e\,|\nabla u|^2\,V_\de''(u) +  \frac{1}{\e}\,W'(u)\,V_\de'(u)\Big|\le C\,\AC(u)\,.
\end{equation}
Since $V_\de(r)\le C\,r^2$ for $r\in[0,1]$ and $u\in L^2(\R^n)$ we can find a sequence $R_j\to\infty$ such that $\int_{\pa B_{R_j}}V_\de(u)\to 0$ as $j\to\infty$, and thus apply the divergence theorem to deduce that, for every $x_0\in\R^n$,
\begin{eqnarray}
\label{eq upper bound V}
(n-1)\, \int_{\R^n} \frac{V_\delta(u)}{|x-x_0|}\,dx&=& \int_{\R^n} V_\delta(u)\,\Div\Big(\frac{x-x_0}{|x-x_0|}\Big)\,dx
\\
\nonumber
&=& - \int_{\R^n} V'_\delta(u)\,\nabla u\cdot \frac{x-x_0}{|x-x_0|}\,dx
\leq \| V'_\delta(u)\|_{L^2(\R^n)}\, \| \nabla u\|_{L^2(\R^n)}\,.
\end{eqnarray}
Setting $R=2\,\max\{1,\e\}/\s_0\,\sqrt{n+1}$, we can apply \eqref{non vanishing estimate} to find $x_0\in\R^n$ such that
\[
C(\e)\,\fint_{B_R(x_0)}V_\de(u)\ge\min\Big\{1,\frac{\V_\de(u)}{\AC(u)}\Big\}^n
\]
which combined with \eqref{eq upper bound V} gives
\begin{equation}
  \label{kurt 2}
  \| V'_\delta(u)\|_{L^2(\R^n)}\, \| \nabla u\|_{L^2(\R^n)}
\ge\frac{(n-1)}{R}\,\int_{B_R(x_0)}V_\de(u)\ge \frac1{C(\e)}\,\min\Big\{1,\frac{\V_\de(u)}{\AC(u)}\Big\}^n\,,
\end{equation}
that is \eqref{vdelta prime squared below}. In summary, by combining \eqref{kurt 1} and \eqref{kurt 2} we find
\begin{eqnarray}
|\l_{\e,\de}[u]|&\le&C\,\frac{\AC(u)}{\int_{\R^n}V_\de'(u)^2}\le C(\e)\,\frac{\AC(u)}{\min\Big\{1,\frac{\V_\de(u)}{\AC(u)}\Big\}^{2n}}\,\int_{\R^n}|\nabla u|^2
\end{eqnarray}
that is \eqref{eq upper bound lambda}. When $\e\in(0,\e_0]$ and $\de=0$ we can replace the constant $C(\e)$ in \eqref{eq upper bound lambda} with a plain universal constant $C$ by exploiting the fact that \eqref{non vanishing estimate 2} can be used in place of \eqref{non vanishing estimate} (notice indeed that $\max\{1,\e\}\le C$ when $\e\in(0,\e_0]$).

\medskip

\noindent {\it Step three}: We prove statement (b). Setting for brevity,
\[
\NN_{\e,\de}[u]=\int_{\R^n}2\,\e\,|\nabla u|^2\,V_\de''(u)+\frac{W'(u)\,V_\de'(u)}\e\,,\qquad\de\in[0,\de_0]\,,
\]
we notice that for every $\de,\de_*\in[0,\de_0]$ and $u,v\in X\setminus\{0\}$,
\begin{equation}\label{eq difference lambda}
\l_{\e,\de}[u]-\l_{\e,\de_*}[v]=
\frac{\NN_{\e,\de}[u]-\NN_{\e,\de_*}[v]}{\int_{\R^n}V_\delta'(u)^2}
+
\frac{\NN_{\e,\de_*}[v]\,\big\{\int_{\R^n}V_{\delta_*}'(v)^2-\int_{\R^n}V_\delta'(u)^2\big\}}{\int_{\R^n}V_\delta'(u)^2 \int_{\R^n}V_{\delta_*}'(v)^2}\,.
\end{equation}

\medskip

{\it We first work on \eqref{eq difference lambda} with $\de=\de_*$}, with the goal of proving \eqref{eq Lip lambda}, \eqref{eq holder}, and \eqref{eq lipschitz}. Since $\AC(u)\le\|u\|_{W^{1,2}}^2/\e$, we deduce from \eqref{vdelta prime squared below} that, if $\|u\|_{W^{1,2}}\le C(\e)$, then
\begin{equation}
  \label{estimetta}
  \frac1{\int_{\R^n}V_\de'(u)^2}\le \frac{C(\e)}{\min\{1,\V_\de(u)^{2n}\}}\,.
\end{equation}
Recalling from \eqref{kurt 1} that $|\NN_{\e,\de}[u]|\le C\,\AC(u)\le C(\e)\,\|u\|_{W^{1,2}}$, and using again first $\Lip(V_\de',[0,1])\le C$, and then $|V_\de'(r)|\le C\,t$ for $r\in[0,1]$, we find that, if $\|u\|_{W^{1,2}}\le C(\e)$, then for every $v\in W^{1,2}(\R^n;[0,1])$
\begin{eqnarray}
  \nonumber
  &&\Big|\NN_{\e,\de}[v]\,\int_{\R^n}\big(V_\delta'(v)^2-V_\delta'(u)^2\big)\Big|
  \le C(\e)\,\int_{\R^n}|V_\de'(u)-V_\de'(v)|\,\big(|V_\de'(u)|+|V_\de'(v)|\big)
  \\\nonumber
  &&
  \le C(\e)\,\|u-v\|_{L^2}\,\Big(\int_{\R^n}|V_\de'(u)|^2+|V_\de'(v)|^2\Big)^{1/2}
  \\
  \label{eq Nbound}
  &&\le C(\e)\,\max\{\|u\|_{L^2},\|v\|_{L^2}\}\,\|u-v\|_{L^2}\,.
\end{eqnarray}
In summary, by \eqref{eq difference lambda}, \eqref{estimetta} and \eqref{eq Nbound} for every $\e>0$ and $u,v\in W^{1,2}(\R^n;[0,1])\setminus\{0\}$ with $\max\{\|u\|_{W^{1,2}(\R^n)},\| v\|_{W^{1,2}(\R^n)}\}\leq C(\e)$ we have proved that
\begin{equation}
  \label{partial}
  \big|\l_{\e,\de}[u]-\l_{\e,\de}[v]\big|
\le C(\e)\,\frac{|\NN_{\e,\de}[u]-\NN_{\e,\de}[v]|}{\min\{1,\V_\de(u)^{2n}\}}
+\frac{C(\e)\,\|u-v\|_{L^2}}{\min\{1,\V_\de(u)^{2n}\}\,\min\{1,\V_\de(v)^{2n}\}}\,.
\end{equation}
We first estimate that
\begin{eqnarray}\nonumber
  |\NN_{\e,\de}[u]-\NN_{\e,\de}[v]|&\le& 2\,\e\,\int_{\R^n}|\nabla(u-v)|\,\big(|\nabla u|+|\nabla v|)\,|V_\de''(u)|
  \\\label{kurt 3}
  &&+2\,\e\,\int_{\R^n}|\nabla v|^2\,|V_\de''(u)-V_\de''(v)|
  \\\nonumber
  &&+\frac1\e\,\int_{\R^n}|W'(u)-W'(v)|\,|V_\de'(u)|+\frac1\e\,\int_{\R^n}|W'(v)|\,|V_\de'(u)-V_\de'(v)|\,.
\end{eqnarray}
By recalling that $\Lip(W';[0,1])\le C$, $\Lip(V_\de';[0,1])\le C$, $\Lip(V_\de'';[0,1])\le C(\de)$ (when $\de>0$), and $\max\{|V_\de'(r)|,|W'(r)|\}\le C\,r$ for $r\in[0,1]$ we find
\begin{eqnarray}\nonumber
&&|\NN_{\e,\de}[u]-\NN_{\e,\de}[v]|
\le
C\,\max\big\{\|\nabla u\|_{L^2},\|\nabla v\|_{L^2}\big\}\,\|\nabla u-\nabla v\|_{L^2}
\\\nonumber
&&+C(\e,\de)\,\|u-v\|_{C^0}\,\int_{\R^n}|\nabla v|^2+\frac{C}\e\,\|u-v\|_{L^2}\,\max\{\|u\|_{L^2},\|v\|_{L^2}\}
\\\label{kurt 4}
&\le& C(\e)\,\max\big\{\|u\|_{W^{1,2}},\|v\|_{W^{1,2}}\big\}\,\|u-v\|_{W^{1,2}}+C(\de)\,\|\nabla v\|_{L^2}^2\,\|u-v\|_{C^0}\,;
\end{eqnarray}
while, using $[V_\de'']_{C^{0,\g(n)}[0,1]}\le C$ (which also holds when $\de=0$) in place of $\Lip(V_\de'';[0,1])\le C(\de)$, we find instead
\begin{eqnarray}\nonumber
&&|\NN_{\e,\de}[u]-\NN_{\e,\de}[v]|
\\\label{kurt 5}
&\le& C(\e)\,\max\big\{\|u\|_{W^{1,2}},\|v\|_{W^{1,2}}\big\}\,\|u-v\|_{W^{1,2}}
+C\,\|\nabla v\|_{L^2}^2\,\|u-v\|_{C^0}^{\g(n)}\,;
\end{eqnarray}
thanks to $\max\{\|u\|_X,\|v\|_X\}\le C(\e)$; with the convention that $C(\e,\de)=+\infty$ if $\de=0$, we thus conclude from \eqref{kurt 3}, \eqref{kurt 4} and \eqref{kurt 5} that
\begin{eqnarray}\label{stima per Neps}
  |\NN_{\e,\de}[u]-\NN_{\e,\de}[v]|\le \min\Big\{C(\e,\de)\,\|u-v\|_{X},C(\e)\|u-v\|_X^{\g(n)}\Big\}\,.
\end{eqnarray}
By means of \eqref{partial} and \eqref{stima per Neps} we find that, if $\max\{\|u\|_X,\|v\|_X\}\le C(\e)$, then \eqref{eq Lip lambda} and \eqref{eq holder} hold. We now assume that $u,v\in W^{2,2}(\R^n;[0,1])\setminus\{0\}$ and prove \eqref{eq lipschitz}. To this end let us first notice that an integration by parts gives
\begin{eqnarray*}
&&\int_{\R^n}|\nabla u|^2\,V_\de''(u)-\int_{\R^n}|\nabla v|^2\,V_\de''(v)
\\
&=&\int_{\R^n}(\nabla u-\nabla v)\cdot\nabla v\,\,V_\de''(v)+\int_{\R^n}\nabla u\cdot\nabla u \,\,V_\de''(u)-\nabla u\cdot\nabla v\,\,V_\de''(v)
\\
&=&\int_{\R^n}(\nabla u-\nabla v)\cdot\nabla v\,V_\de''(v)-\int_{\R^n}(\Delta u)\,\big(V_\de'(u)-V_\de'(v)\big)\,,
\end{eqnarray*}
which can be used to replace \eqref{kurt 3} with
\begin{eqnarray}\label{kurt 6}
  |\NN_{\e,\de}[u]-\NN_{\e,\de}[v]|&\le& 2\,\e\,\int_{\R^n}|\Delta u|\,|V_\de'(u)-V_\de'(v)|+|\nabla (u-v)|\,|\nabla v|\,|V_\de''(v)|
  \\\nonumber
  &&+\frac1\e\,\int_{\R^n}|W'(u)-W'(v)|\,|V_\de'(u)|+\frac1\e\,\int_{\R^n}|W'(v)|\,|V_\de'(u)-V_\de'(v)|\,.
\end{eqnarray}
By $\Lip(W';[0,1])\le C$, $\Lip(V_\de';[0,1])\le C$ and $\max\{|V_\de'(r)|,|W'(r)|\}\le C\,r$ for $r\in[0,1]$ we thus find
\[
|\NN_{\e,\de}[u]-\NN_{\e,\de}[v]|\le C(\e)\,\max\{\|u\|_{W^{2,2}},\|v\|_{W^{1,2}}\}\,\|u-v\|_{W^{1,2}}
\]
which, combined with \eqref{partial} and with the assumption $\max\{\|u\|_{W^{2,2}},\|v\|_{W^{2,2}}\}\le C(\e)$, gives \eqref{eq lipschitz}.

\medskip

{\it We now work on \eqref{eq difference lambda} with $u=v$ and $\de_*=0$ to prove \eqref{eq holder 2}}. To this end, we first notice that
\begin{eqnarray*}
  \big|\NN_{\e,\de}[u]-\NN_{\e}[u]\big|&\le& 2\,\e\int_{\R^n}|\nabla u|^2\,|V_\de''(u)-V''(u)|+\frac1\e\int_{\R^n}|W'(u)|\,|V_\de'(u)-V'(u)|
\\
&\le&2\,\AC(u)\,\|V_\de-V\|_{C^2[0,1]}\,,
\end{eqnarray*}
while $V_\de'(0)=V'(0)=0$ gives $|V_\de'(u)-V'(u)|\le \|V_\de''-V''\|_{C^0[0,1]}\,|u|$ on $\R^n$, and thus, arguing as in \eqref{eq Nbound}, that
\begin{eqnarray*}
  \Big|\int_{\R^n}V_\de'(u)^2-\int_{\R^n}V'(u)^2\Big|\le C\,\|V_\de-V\|_{C^2[0,1]}\,\|u\|_{L^2(\R^n)}^2\,.
\end{eqnarray*}
Combining this last two estimates with \eqref{eq difference lambda}, \eqref{estimetta} and $|\NN_{\e,\de}[u]|\le C(\e)\,\|u\|_X$ we immediately prove \eqref{eq holder 2}.
\end{proof}

\section{Existence, regularity and entropies of the flow (Proof of Theorem \ref{thm existence and regularity})}\label{section proof of theorem 1}

\begin{proof}[Proof of Theorem \ref{thm existence and regularity}]
\noindent {\it Step one, existence of mild solutions}: For every $\e,\de>0$ we introduce the regularized flows
\begin{equation}\tag{DF$_\de$}
\label{eq auxiliary flow}
	\begin{cases}
	\e^2\,\partial_t u=  2 \,\e^2\, \Delta u - W'(u)+ \e\,\l_{\e,\de}[u(t)]\, V_\delta'(u)\,,\qquad\mbox{on $\R^n\times(0,\infty)$}\,,
    \\
	u(0)=u_0\,,
	\end{cases}
\end{equation}
that are obtained by replacing $V$ with $V_\de$ in \eqref{diffused VPMCF}. If we set
\begin{equation}
  \label{gxt}
  G(x,t)=\frac{e^{-|x|^2/8\,t}}{(8\,\pi\,t)^{n/2}}\,,\qquad (x,t)\in\R^n\times(0,\infty)\,,
\end{equation}
and $S_tv=v\star G(t)$ for $v:\R^n\to\R$ and $t>0$, then a solution $u$ of
\begin{equation}
  \label{duhamel equation}
  \mbox{$u_t-2\,\Delta u=f$ on $\R^n\times(0,\infty)$}\,,\qquad  \mbox{$u(0)=u_0$ on $\R^n$}\,,
\end{equation}
with data $f:\R^n\times(0,\infty)\to\R$ and $u_0:\R^n\to\R$, is formally given by the {\it Duhamel formula} (see, for example, \cite[Section 2.3.1]{EvansPDE}),
\begin{eqnarray}\label{duhamel formula}
u(x,t)&=&S_tu_0(x)+\int_0^t\,S_{t-s}[f(s)](x)\,ds
\\\nonumber
&=&\int_{\R^n}G(x-y,t)\,u_0(y)\,dy+\int_0^tds\int_{\R^n}G(x-y,t-s)\,f(y,s)\,dy\,.
\end{eqnarray}
Solutions to the integral equation \eqref{duhamel formula} are usually called {\it mild solutions} of the parabolic PDE \eqref{duhamel equation}, and can be constructed by fixed points arguments. We now set up the stage to prove the short-time existence of a unique mild solution to \eqref{eq auxiliary flow}.

\medskip

From now on we fix $u_0$ with
\begin{equation}
  \label{data}
  u_0\in\bigcap_{p\ge 2}W^{2,p}(\R^n;[0,1])\,,\qquad\V(u_0)=1\,.
\end{equation}
By the properties of $V_\de$ and $W$ it is easily seen that $\V(u_0)=1$ implies  $\V_\de(u_0)\in(0,\infty)$ for every $\de>0$. Moreover, we also have
\begin{equation}
  \label{limit of vdelta}
  \lim_{\de\to 0^+}\V_\de(u_0)=\V(u_0)=1\,.
\end{equation}

\medskip

Given parameters $\tau>0$ and $\sigma\in(0,1)$ to be chosen in a moment, we then introduce the vector space
\begin{equation}
  \label{def of Y}
  Y=\big\{u\in C^0\big([0,\tau);B^X_\s(u_0)\big):u(0)=u_0\big\}\,,
\end{equation}
where $B^X_\s(u_0)$ denotes the ball in $X$ of radius $\s$ and center $u_0$. If we pick $\s$ small enough depending on $u_0$, and since $\V_\de(u_0)>0$ for every $\de>0$, we find that (see \eqref{volume vde ut} below) $\V_\de(u(t))>0$ for every $t\in[0,\tau)$: in particular, $\l_{\e,\de}[u(t)]$ is well defined for every $t\in[0,\tau)$. Hence, for each $\e>0$, $\de\in(0,\de_0]$, and $u\in Y$ we can define $F_\de[u]:\R^n\times[0,\tau)\to\R$ and $T_\de[u]:\R^n\times[0,\tau)\to\R$ by setting
\begin{eqnarray}\label{def of Fu}
F_\de[u]&=&-\frac{W'(u)}{\e^2}+\l_{\e,\de}[u(t)]\,\frac{V_\de'(u)}\e\,,
\\\label{def of Tu}
T_\de[u]&=&S_tu_0+\int_0^tS_{t-s}[F_\de[u](s)]\,ds\,.
\end{eqnarray}
{\it We claim that}, if $\tau$ and $\s$ (introduced in the definition of $Y$) are small enough with respect to $\e$, $\de$, and $u_0$, then $u\mapsto T_\de[u]$ defines a contraction of the Banach space $Y$. By the Banach fixed point theorem we will then deduce the existence of a unique $u\in C^0([0,\tau);B^X_\s(u_0))$ such that $u(0)=u_0$ and
\begin{equation}
  \label{mild solution of}
  u(t)=S_tu_0+\int_0^tS_{t-s}[F_\de[u](s)]\,ds\,,
\end{equation}
for every $t\in(0,\tau)$. In particular, this claim will prove the existence of a unique, short-time, mild solution $u$ of \eqref{eq auxiliary flow}.

\medskip

To prove our claim, we begin by showing that if $\s$ is small enough in terms of $u_0$ and $\de$, then for every $u\in Y$ we have
\begin{equation}
  \label{volume vde ut}
  \V_\de(u(t))\ge\frac{\V_\de(u_0)}2\,,\qquad\forall t\in[0,\tau)\,.
\end{equation}
Indeed, by $|V_\de'(r)|\le C\,|r|$ for every $r\in(0,1)$ and by $\|u_0-u(t)\|_X<\s<1$ we find that
\begin{eqnarray*}
\V_\de(u(t))-\V_\de(u_0)&=&\int_{\R^n}\,\int_0^1 V_\de'\big(s\,u_0+(1-s)\,u(t)\big)\,(u_0-u(t))\,ds
\\
&\ge&-C\,\int_{\R^n}\,|u_0-u(t)|\,\big(|u_0|+|u(t)|\big)
\\
&\ge& -C\,\|u(t)-u_0\|_{L^2}\,\big(\|u_0\|_{L^2}+\|u(t)\|_{L^2}\big)
\\
&\ge&-C\,\s\,\big(2\,\|u_0\|_{L^2}+\s\big)\ge-\frac{\V_\de(u_0)}2\,,
\end{eqnarray*}
provided $\s$ is small enough in terms of $\de$ and $u_0$ (recall that $\V_\de(u_0)>0$). Having proved \eqref{volume vde ut} we notice that, combined with \eqref{eq upper bound lambda}, \eqref{eq Lip lambda}, and $\AC(u)\le C(\e)\,\|u\|_{W^{1,2}}$, it implies
\begin{eqnarray}
  \label{base 1}
  \big|\l_{\e,\de}[u(t)]\big|&\le& C(\e,\de,u_0)
  \\
  \label{base 2}
  \big|\l_{\e,\de}[u(t)]-\l_{\e,\de}[v(t)]\big|&\le& C(\e,\de,u_0)\,\|u(t)-v(t)\|_X\,,
\end{eqnarray}
for every $u,v\in Y$ and every $t\in[0,\tau)$. Next we notice that if $u\in Y$, then for every $t\in(0,\tau)$ we have
\begin{equation}
  \label{Fu absolute value}
  |F_\de[u(t)]|\le C(\e)\,\max\big\{1,|\l_{\e,\de}[u(t)]\big\}\,|u(t)|\,,\qquad\mbox{on $\R^n$}\,;
\end{equation}
moroever, since $|W'(r)|\le C\,|r|$ and $|V_\de'|\le C\,|r|$ for $r\in(0,1)$, $W''$ and $V_\de''$ are bounded on $[0,1]$, and
\begin{equation}
  \label{base 3}
  \nabla\big(F_\de[u](t))=\Big\{-\frac{W''(u)}{\e^2}+\l_{\e,\de}[u(t)]\,\frac{V_\de''(u)}\e\Big\}\,\nabla u(t)\,,
\end{equation}
then
\begin{equation}
  \label{DFu absolute value}
  |\nabla F_\de[u(t)]|\le C(\e)\,\max\big\{1,|\l_{\e,\de}[u(t)]\big\}\,|\nabla u(t)|\,,\qquad\mbox{on $\R^n$}\,.
\end{equation}
By combining \eqref{Fu absolute value} and \eqref{DFu absolute value} with \eqref{base 1}, we see that if $u\in Y$, then $F_\de[u]\in C^0([0,\tau);X)$ and
\begin{eqnarray}
\label{utile 1}
\|F_\de[u](t)\|_{C^0}\!\!&\le&\!\! C(\e)\,\max\big\{1,|\l_{\e,\de}[u(t)]\big\}\,\|u(t)\|_{C^0}\,,
\\\label{utile 2}
\|F_\de[u](t)\|_{L^2}\!\!&\le&\!\! C(\e)\,\max\big\{1,|\l_{\e,\de}[u(t)]\big\}\,\|u(t)\|_{L^2}\,,
\\\label{utile 3}
\|\nabla(F_\de[u](t))\|_{L^2}\!\!&\le&\!\! C(\e)\,\max\big\{1,|\l_{\e,\de}[u(t)]\big\}\,\|\nabla u(t)\|_{L^2},\qquad\forall t\in[0,\tau)\,.
\end{eqnarray}
By combining these estimates with \eqref{base 1} we thus find
\begin{equation}
  \label{base 4}
  \|F_\de[u(t)]\|_X\le C(\e,\de,u_0)\,\|u\|_X\,,\qquad\forall t\in[0,\tau)\,.
\end{equation}
Since $\|S_tv\|_{C^0}\le \|v\|_{C^0}$, $\|S_tv\|_{L^2}\le \|v\|_{L^2}$, and $\|\nabla(S_tv)\|_{L^2}\le \|\nabla v\|_{L^2}$ for every $v\in X$ and every $t>0$, we deduce that if $u\in Y$, then $T_\de[u]\in C^0([0,\tau);X)$. Moreover, since $\|S_tu_0-u_0\|_X\to 0$ as $t\to 0^+$, if we pick $\tau$ small enough in terms of $\e$, $\de$, $u_0$ and $\s$ (where $\s$ has already been chosen small enough in terms of $u_0$ and $\de$), then we find
\begin{eqnarray*}
\|T_\de[u](t)-u_0\|_X&\le& \|S_tu_0-u_0\|_X+t\,\sup_{0<s<t}\|F_\de[u](t)\|_X
\\
&\le&\|S_tu_0-u_0\|_X+C(\e,\de,u_0)\,t\,\|u(t)\|_X<\s\,,
\end{eqnarray*}
for every $t\in[0,\tau_0)$, so that $T_\de[u]\in Y$ for every $u\in Y$. Having proved that $T_\de$ is a self-map of $Y$, we now pick $u,v\in Y$ and notice that for every $t\in[0,\tau)$ we have
\begin{equation}
  \label{base 0}
  \|T_\de[u]-T_\de[v]\|_Y=\sup_{0<t<\tau}\|T_\de[u](t)-T_\de[v](t)\|\le \tau\,\sup_{0<t<\tau}\|F_\de[u](t)-F_\de[v](t)\|\,.
\end{equation}
Now for every $u,v\in Y$ and $t\in[0,\tau)$ we have, pointwise on $\R^n$,
\begin{eqnarray}\nonumber
  &&|F_\de[u](t)-F_\de[v](t)|\le\Big\{\frac{\Lip(W')}{\e^2}+\big|\l_{\e,\de}[u(t)]\big|\,\frac{\Lip(V_\de')}\e\Big\}\,|u(t)-v(t)|
  \\\label{utile 4}
  &&\hspace{4cm}+ \big|\l_{\e,\de}[u(t)]-\l_{\e,\de}[v(t)]\big|\,\frac{|V_\de'(v(t))|}\e
  \\\nonumber
  &&\hspace{1cm}\le C(\e)\,\Big\{\max\big\{1,\big|\l_{\e,\de}[u(t)]\big|\big\}\,|u(t)-v(t)|+\big|\l_{\e,\de}[u(t)]-\l_{\e,\de}[v(t)]\big|\,|v(t)|\Big\}\,,
\end{eqnarray}
where we have used $|V_\de'(r)|\le C\,|r|$ for $r\in[0,1]$; then, by \eqref{base 1},  \eqref{base 2}, and \eqref{utile 4}, we find that
\begin{eqnarray}\label{base 41}
  &&\|F_\de[u](t)-F_\de[v](t)\|_X\le C(\e,\de,u_0)\,\|u(t)-v(t)\|_X\,,\qquad\forall t\in[0,\tau)\,.
\end{eqnarray}
Similarly, starting from \eqref{base 3}, we find that, pointwise on $\R^n$,
\begin{eqnarray}\nonumber
  &&|\nabla(F_\de[u](t))-\nabla (F_\de[v](t))|\le \Big\{\frac{\Lip(W')}{\e^2}+\big|\l_{\e,\de}[u(t)]\big|\,\frac{\Lip(V_\de')}\e\Big\}\,|\nabla u(t)-\nabla v(t)|
  \\\nonumber
  &&\hspace{4cm}+\Big\{\frac{\Lip(W'')}{\e^2}\,+\frac{\Lip(V_\de'')}\e\,\,|\l_{\e,\de}[v(t)]\Big\}\,|\nabla v(t)|\,|u(t)-v(t)|
  \\\nonumber
  &&\hspace{4cm}+\Big|\l_{\e,\de}[u(t)]-\l_{\e,\de}[v(t)]\Big|\,|\nabla v(t)|\,\frac{|V_\de''(u(t))|}\e
  \\\label{utile 5}
  &&\le C(\e,\de)\,\Big\{\max\big\{1,\big|\l_{\e,\de}[u(t)]\big|\big\}\,\big\{|\nabla v(t)|\,|u(t)-v(t)|+|\nabla u(t)-\nabla v(t)|\big\}
  \\\nonumber
  &&\hspace{8cm}+\Big|\l_{\e,\de}[u(t)]-\l_{\e,\de}[v(t)]\Big|\,|\nabla v(t)|\Big\}\,,
\end{eqnarray}
where we have made crucial use of the regularization $V_\de$ of $V$ to assert that $\Lip(V_\de'')\le C(\de)$. By \eqref{base 1}, \eqref{base 2}, \eqref{utile 5}, and $\s<1$,
\begin{eqnarray}\label{base 6}
   \|\nabla(F_\de[u](t))-\nabla (F_\de[v](t))\|_{L^2}\le C(\e,\de,u_0)\,\|u(t)-v(t)\|_{X}\,,\qquad\forall t\in[0,\tau)\,.
\end{eqnarray}
By combining \eqref{base 41} and \eqref{base 6} with \eqref{base 0} we conclude that
\[
\|T_\de[u]-T_\de[v]\|_Y\le C(\e,\de,u_0)\,\tau\,\|u-v\|_Y\,.
\]
In particular, up to further decrease $\tau$ depending on $\e$, $\de$ and $u_0$, we can ensure that $\Lip(T_\de;Y)<1$, and that $T_\de$ is a contraction of $Y$.

\medskip

\noindent {\it Step two, regularity and global-in-time existence for \eqref{eq auxiliary flow}}: Given $\e>0$, $\de\in(0,\de_0]$ and $u_0$ as in \eqref{data}, by step five we can define $\tau_*\in(0,\infty]$ as the supremum of those $\tau>0$ such that there exists $u\in C^0([0,\tau);X)$ such that $u(0)=u_0$ and \eqref{mild solution of} holds for every $t\in(0,\tau)$ and
\begin{equation}
  \label{bounds 0}
  \sup_{0<t<\tau}\big\{\AC(u(t)),\V_\de(u(t))\big\}<\infty\,,\qquad\inf_{0<t<\tau}\V_\de(u(t))>0\,.
\end{equation}
In this step we prove that $\tau_*=+\infty$, that $\AC(u(t))$ is Lipschitz continuous and decreasing on $[0,\infty)$, that $\V_\de(u(t))=\V_\de(u_0)$ for every $t\in[0,\infty)$, and that $u(t)\in W^{3,p}(\R^n)$ and $\pa_t u(t)\in W^{1,p}(\R^n)$ for every $t>0$ and $p\ge 2$ with
\begin{eqnarray}
\label{full estimate delta}
&&\max\big\{\|u(t)\|_{W^{3,p}(\R^n)},\|\pa_tu(t)\|_{W^{1,p}}\big\}
\\\nonumber
&&\le C(\e,p)\,M\big(\AC(u_0),\V_\de(u_0),1/\V_\de(u_0),\|\nabla u_0\|_{W^{2,p}},t\big)\,,
\end{eqnarray}
where $M$ denotes a generic constant which is {\it increasing and continuous} in its arguments.

\medskip

We first notice that by combining $0\le u\le 1$, \eqref{eq l2 bound}, \eqref{eq upper bound lambda} and \eqref{bounds 0} we find that
\begin{equation}
  \label{bounds 1}
  \sup_{0<s<t}\|u(s)\|_{L^p}<\infty\,,\qquad\sup_{0<s<t}\big|\l_{\e,\de}[u(s)]\big|<\infty
\end{equation}
for every $t\in(0,\tau_*)$ and $p\ge 2$. Next, setting $G(t)=G(\cdot,t)$ (recall \eqref{gxt}), we notice that for every $t>0$ we have
\begin{equation}
  \label{gt L1}
  \|G(t)\|_{L^1}=1\,,\qquad
  \max\big\{t^{1/2}\|\nabla G(t)\|_{L^1},\,t^{3/2}\|\nabla^2 G(t)\|_{L^1},\,t\|\pa_tG(t)\|_{L^1}\big\}\le C(n)\,.
\end{equation}
Combining $u(t)=T_\de[u(t)]$ with \eqref{gt L1}, \eqref{Fu absolute value} and standard applications of Fubini's theorem and H\"older's inequality, we find that, if $p\ge 2$, then, for every $t\in(0,\tau_*)$,
\begin{eqnarray}\label{gradient one}
\|\nabla u(t)\|_{L^p}&\le&\|\nabla u_0\|_{L^p}\,\|G(t)\|_{L^1}
\\\nonumber
&&+C(\e)\,\sup_{0<s<t}\,\max\big\{1,\big|\l_{\e,\de}[u(s)]\big|\big\}\,\|u(s)\|_{L^p}\,\,\int_0^t\|\nabla G(t-s)\|_{L^1}\,ds
\\\nonumber
&\le&\|\nabla u_0\|_{L^p}+C(\e)\,t^{1/2}\,\sup_{0<s<t}\,\max\big\{1,\big|\l_{\e,\de}[u(s)]\big|\big\}\,\|u(s)\|_{L^p}\,,
\end{eqnarray}
so that \eqref{bounds 1} gives
\begin{equation}
  \label{gradient one pre}
  \sup_{0<s<t}\|\nabla u(s)\|_{L^p}<\infty\,,\qquad\forall t\in(0,\tau_*)\,,p\ge 2\,.
\end{equation}
Similarly, again from $u(t)=T_\de[u(t)]$ and \eqref{gt L1}, we obtain that
\begin{eqnarray}\label{gradient two}
\|\nabla^2 u(t)\|_{L^p}&\le&\|\nabla^2u_0\|_{L^p}\,\|G(t)\|_{L^1}
\\\nonumber
&&+C(\e)\,\sup_{0<s<t}\,\max\big\{1,\big|\l_{\e,\de}[u(s)]\big|\big\}\,\|\nabla u(s)\|_{L^p}\,\,\int_0^t\|\nabla G(t-s)\|_{L^1}\,ds
\\\nonumber
&\le&\|\nabla^2 u_0\|_{L^p}+C(\e)\,t^{1/2}\,\sup_{0<s<t}\,\max\big\{1,\big|\l_{\e,\de}[u(s)]\big|\big\}\,\|\nabla u(s)\|_{L^p}\,
\end{eqnarray}
which, combined with \eqref{base 1}, \eqref{bounds 1}, and \eqref{gradient one pre}, gives
\begin{equation}
  \label{summary w2p}
  \sup_{0<s<t}\|u(s)\|_{W^{2,p}}<\infty\,,\qquad\forall t\in(0,\tau_*)\,,p\ge 2\,.
\end{equation}
In particular, $u\in L^\infty_{\rm loc}((0,\tau);W^{2,2}(\R^n))$, so that, by standard properties of mild solutions (see e.g. \cite[Proposition 4.1.9]{cazehara}) \eqref{mild solution of} implies that
\begin{equation}
  \label{w11}
  u\in W^{1,1}_{\rm loc}((0,\tau_*);X)\,,
\end{equation}
with
\begin{equation}
  \label{strong solution}
  \partial_t u=  2 \, \Delta u +f_\de\qquad\mbox{a.e. on $\R^n\times(0,\tau_*)$}\,,
\end{equation}
where we have set $f_\de(x,t)=F_\de[u](x,t)$, and where, by \eqref{Fu absolute value} and \eqref{DFu absolute value},
\begin{eqnarray}
  \label{fdelta Lp}
  \|f_\de(t)\|_{L^p}\!\!&\le&\!\!
  C(\e)\,\max\big\{1,|\l_{\e,\de}[u(t)]\big\}\,\|u(t)\|_{L^p}\,,
  \\
  \label{fdelta W1p}
  \|\nabla f_\de(t)\|_{L^p}\!\!&\le&\!\!
  C(\e)\,\max\big\{1,|\l_{\e,\de}[u(t)]\big\}\,\|\nabla u(t)\|_{L^p}\,,\qquad\forall p\ge 2\,.
\end{eqnarray}
Moreover, by \eqref{strong solution}, we see that for every $t\in(0,\tau_*)$ and $p\ge 2$,
\begin{equation}
  \label{patu with D2u and f}
  \|\pa_t u(t)\|_{L^p}\le C(\e)\,\Big\{\|\nabla^2u(t)\|_{L^p}+\|f_\de(t)\|_{L^p}\Big\}\,.
\end{equation}

We now differentiate the flow in space to obtain $L^p$-estimates on $\nabla^3 u$ and on $\nabla(\pa_tu)$. Given $e\in\R^n$ with $|e|=1$ and $v=v(x,t)$ we set $e_hv(x,t)=(v(x+h\,e,t)-v(x,t))/h$ for the (spatial) incremental ratio of $v$ in the direction $v$ of step $h$. In this way \eqref{strong solution} implies that $e_hu$ solves
\begin{eqnarray}
  \label{PDE for ehu}
  \pa_t(e_hu)-2\,\Delta(e_hu)=e_hf_\de\qquad\mbox{on $\R^n\times(0,\infty)$},\qquad\mbox{$e_hu(0)=e_hu_0$ on $\R^n$}\,.
\end{eqnarray}
Next we consider the decomposition $e_hu=u_{1,h}+u_{2,h}$ where $u_{1,h}(t)=S_t[e_hu_0]$ and thus
\begin{eqnarray}
  \label{PDE for u2h}
  \pa_tu_{2,h}-2\,\Delta u_{2,h}=e_hf_\de\qquad\mbox{on $\R^n\times(0,\infty)$},\qquad\mbox{$u_{2,h}(0)=0$ on $\R^n$}\,.
\end{eqnarray}
Since $u_{2,h}(0)=0$ on $\R^n$ we can apply \cite[Corollary 7.31]{lieberman} to deduce that for every $(a,b)\cc(0,\tau_*)$ and $p\ge 2$
\begin{equation}
  \int_a^b\,dt\int_{\R^n}|\pa_tu_{2,h}(t)|^p+|\nabla^2u_{2,h}(t)|^p\le C(p)
  \int_a^b\,dt\int_{\R^n}|e_hf_\de(t)|^p\,.
\end{equation}
Since $\|e_hf_\de(t)\|_{L^p}\le \|\nabla f_\de(t)\|_{L^p}$ and, thanks to \eqref{fdelta W1p}, \eqref{bounds 1}, and \eqref{gradient one pre},
\[
\sup_{0<s<t}\|\nabla f_\de(s)\|_{L^p}<\infty\,,\qquad\forall t\in(0,\tau_*)\,,
\]
we conclude that, for every $(a,b)\subset(0,t)$, $t<\tau_*$, and $p\ge 2$,
\begin{equation}
  \int_a^b\,dt\int_{\R^n}|\pa_tu_{2,h}(t)|^p+|\nabla^2u_{2,h}(t)|^p\le C(p)
  (b-a)\,\sup_{a<s<b}\|\nabla f_\de(s)\|_{L^p}^p\,,
\end{equation}
so that, by arbitrariness of $(a,b)$, for every $(a,b)\cc(0,\tau_*)$ and $p\ge 2$,
\begin{equation}
  \label{cz para u2h}
  \sup_{a<s<b}\big\{\|\pa_tu_{2,h}(s)\|_{L^p},\|\nabla^2u_{2,h}(s)\|_{L^p}\big\}
  \le C(p)\, \sup_{a<s<b}\|\nabla f_\de(s)\|_{L^p}\,.
\end{equation}
At the same time, recalling that $u_{1,h}(s)=S_s(e_hu_0)$, by \eqref{gt L1} we find that for every $s>0$
\begin{eqnarray*}
  \|\pa_tu_{1,h}(s)\|_{L^p}\le\|\pa_tG(s)\|_{L^1}\,\|e_hu_0\|_{L^p}\le \frac{C}{s}\,\|\nabla u_0\|_{L^p}\,,
  \\
  \|\nabla^2u_{1,h}(s)\|_{L^p}\le \|\nabla^2 G(s)\|_{L^1}\,\|e_hu_0\|_{L^p}\le \frac{C}{s^{3/2}}\,\|\nabla u_0\|_{L^p}\,.
\end{eqnarray*}
By combining these last two estimates with \eqref{cz para u2h} and $e_hu=u_{1,h}+u_{2,h}$, by the uniformity in $h>0$ and $e$ with $|e|=1$, we conclude that for every $p\ge 2$ and $(a,b)\cc(0,\tau_*)$ we have
\begin{eqnarray}\label{useful inter}
  \sup_{a<s<b}\big\{\|\nabla(\pa_t u)(s)\|_{L^p},\|\nabla^3u(s)\|_{L^p}\big\}
  \le C(p)\sup_{a<s<b}\Big\{\frac{\|\nabla u_0\|_{L^p}}{a^{3/2}},\|\nabla f_\de(s)\|_{L^p}\Big\}\,.
\end{eqnarray}
In combination with \eqref{summary w2p}, \eqref{patu with D2u and f}, \eqref{fdelta Lp}, and \eqref{bounds 1} this implies that, for every $p\ge 2$ and $(a,b)\cc(0,\tau_*)$ we have
\begin{eqnarray}\label{smooth udelta}
  \sup_{a<t<b}\big\{\|\pa_tu(t)\|_{W^{1,p}},\|u(t)\|_{W^{3,p}}\big\}<\infty\,.
\end{eqnarray}
In particular, $u(t)\in C^{2,\a}(\R^n)$ and $\pa_tu(t)\in C^{0,\a}(\R^n)$ for every $\a\in(0,1)$ and $t\in(0,\tau_*)$.

\medskip

The regularity of $u(t)$ established in \eqref{smooth udelta} is sufficient to prove that
\begin{eqnarray}
  \label{vd1}
  &&\V_\de(u(t))=\V_\de(u_0)\,,\qquad\forall t\in[0,\tau_*)\,,
  \\
  \label{vd2}
  &&\mbox{$t\mapsto\AC(u(t))$ is decreasing, continuous on $[0,\tau_*)$,}
  \\\nonumber
  &&\hspace{2.6cm}\mbox{and locally Lipschitz on $(0,\tau_*)$}\,.
\end{eqnarray}
Indeed, let us set $u_\eta=(\psi_\eta\,u)\star\rho_\eta$, where, for each $\eta>0$, $\rho_\eta$ is a mollifying kernel on $\R^n\times\R$, and where $\psi_\eta\in C^\infty_c(\R^n\times(0,\infty))$ is monotonically converging to $1$ as $\eta\to 0^+$. If we set $D(t)=\int_{\R^n}|\nabla u(t)|^2$ and $D_\eta(t)=\int_{\R^n}|\nabla u_\eta(t)|^2$, then by \eqref{gradient one pre} we find that, for every $\vphi\in C^\infty_c(0,\tau)$,
\begin{eqnarray*}
  &&\int_0^\tau D(t)\,\vphi'(t)\,dt=\lim_{\eta\to 0^+}\int_0^\tau\,D_\eta(t)\,\vphi'(t)\,dt
  \\
  &&=-2\,\lim_{\eta\to0^+}\int_0^\tau\vphi(t)\,dt\int_{\R^n}\nabla u_\eta(t)\cdot\nabla(\pa_tu_\eta(t))
  \\
  &&=-2\,\int_0^\tau\,\vphi(t)\,dt\int_{\R^n}\,\nabla u(t)\cdot\nabla(\pa_tu(t))=2\,\int_0^\tau\,\vphi(t)\,dt\int_{\R^n}\Delta u(t)\,\pa_tu(t)\,,
\end{eqnarray*}
where, in computing the second limit, we have used \eqref{smooth udelta}. We have thus proven that $D$ is locally Lipschitz continuous on $(0,\tau_*)$, with
\begin{equation}
  \label{nn1}
  D'(t)=\frac{d}{dt}\int_{\R^n}|\nabla u(t)|^2=-\int_{\R^n}2\,\Delta u(t)\,\pa_t u(t)\,.
\end{equation}
for a.e. $t\in(0,\tau_*)$. By an analogous approximation argument we see that, for a.e. $t\in(0,\tau_*)$,
\begin{equation}
  \label{nn2}
  \frac{d}{dt}\int_{\R^n}W(u(t))=\int_{\R^n}W'(u)\,\pa_tu\,,\qquad \frac{d}{dt}\int_{\R^n}V_\de(u(t))=\int_{\R^n}V_\de'(u)\,\pa_tu\,.
\end{equation}
In particular, by \eqref{strong solution}, for a.e. $t\in(0,\tau_*)$,
\begin{eqnarray*}
  &&\int_{\R^n}V_\de'(u)\,\pa_tu=\int_{\R^n}\Big(2\,\Delta u-\frac{W'(u)}{\e^2}+\l_{\e,\de}[u(t)]\,\frac{V_\de'(u)}\e\Big)\,V_\de'(u)
  \\
  &&=-\int_{\R^n}2\,\nabla u\cdot\nabla(V_\de'(u))-\frac1{\e^2}\int_{\R^n}W'(u)\,V_\de'(u)+\frac{\l_{\e,\de}[u(t)]}\e\,\int_{\R^n}V_\de'(u)^2=0\,,
\end{eqnarray*}
where the last identity follows from the definition \eqref{lambda eps delta u proof} of $\l_{\e,\de}[u(t)]$. This proves that $\V_\de(u(t))$ is constant on $(0,\tau_*)$, and since $t\mapsto\V_\de(u(t))$ is continuous on $[0,\tau_*)$, we deduce \eqref{vd1}. Finally, by \eqref{nn1} and \eqref{nn2} we have that $t\mapsto\AC(u(t))$ is locally Lipschitz continuous on $(0,\tau_*)$ with
\begin{eqnarray*}
  \frac{d}{dt}\AC(u(t))&=&\e\,\int_{\R^n}\pa_tu\,\Big\{-2\,\Delta u+\frac{W'(u)}{\e^2}\Big\}
\\
&=&\e\,\int_{\R^n}\pa_tu\,\Big\{-\pa_tu+\l_{\e,\de}[u(t)]\,\frac{V_\de'(u)}\e\Big\}=-\e\,\int_{\R^n}(\pa_tu)^2\,.
\end{eqnarray*}
where we have taken into account $\int_{\R^n}V_\de'(u)\,\pa_tu=0$. The continuity of $t\mapsto \AC(u(t))$ on $[0,\tau_*)$ is of course immediate from $u\in C^0([0,\tau_*);X)$. This proves \eqref{vd2}.

\medskip

We now prove that $\tau_*=+\infty$. We argue by contradiction, and assume that $\tau_*<\infty$. By combining \eqref{vd1} and \eqref{vd2} (which implies $\AC(u(t))\le\AC(u_0)$ for every $t\in(0,\tau_*)$) with \eqref{eq l2 bound} and \eqref{eq upper bound lambda} we deduce that, for every $p\ge 2$,
\begin{eqnarray}
  \label{hey 1}
\sup_{0<t<\tau_*}\big\{\|u(t)\|_{L^p},|\l_{\e,\de}[u(t)]|\big\}\le C(\e)\,M\big(\e,\AC(u_0),\V_\de(u_0),1/\V_\de(u_0)\big)\,.
\end{eqnarray}
Then, by \eqref{hey 1} and \eqref{gradient one} we find that, for every $p\ge 2$,
\begin{equation}
  \label{hey 2}
  \sup_{0<t<\tau_*}\|\nabla u(t)\|_{L^p}\le C(\e)\,M\big(\AC(u_0),\V_\de(u_0),1/\V_\de(u_0),\|\nabla u_0\|_{L^p},\tau_*\big)\,,
\end{equation}
which, combined with \eqref{gradient two} gives that, for every $p\ge 2$,
\begin{equation}
  \label{hey 4}
  \sup_{0<t<\tau_*}\|\nabla^2 u(t)\|_{L^p}\le C(\e,p)\,M\big(\e,\AC(u_0),\V_\de(u_0),1/\V_\de(u_0),\|\nabla u_0\|_{W^{1,p}},\tau_*\big)\,.
\end{equation}
By combining \eqref{fdelta Lp} and \eqref{fdelta W1p} with \eqref{hey 1} and \eqref{hey 2} we find that, for every $p\ge 2$,
\begin{equation}
  \label{summary h}
  \sup_{0<t<\tau_*}\|f_\de(t)\|_{W^{1,p}}\le C(\e)\,M\big(\e,\AC(u_0),\V_\de(u_0),1/\V_\de(u_0),\|\nabla u_0\|_{L^p},\tau_*\big)\,,
\end{equation}
so that by \eqref{patu with D2u and f} and \eqref{useful inter} with \eqref{hey 4} and \eqref{summary h} we find
\begin{eqnarray*}
  &&\sup_{\tau_*/2<t<\tau_*}\big\{\|\nabla^3u(t)\|_{L^p},\|\pa_tu(t)\|_{W^{1,p}}\big\}
  \\
  &&\hspace{3cm}\le C(\e,p)\,M\big(\AC(u_0),\V_\de(u_0),1/\V_\de(u_0),\|\nabla u_0\|_{W^{2,p}},\tau_*,1/\tau_*\big)\,,
\end{eqnarray*}
which combined with \eqref{hey 1}, \eqref{hey 2} and \eqref{hey 4} finally gives, for every $p\ge 2$,
\begin{eqnarray}
\label{hey3}
&&\sup_{\tau_*/2<t<\tau_*}\big\{\|u(t)\|_{W^{3,p}},\|\pa_tu(t)\|_{W^{1,p}}\big\}
\\\nonumber
&&\hspace{2cm}\le C(\e,p)\,M\big(\AC(u_0),\V_\de(u_0),1/\V_\de(u_0),\|\nabla u_0\|_{W^{2,p}},\tau_*,1/\tau_*\big)\,.
\end{eqnarray}
By the $W^{1,2}$-estimate on $\pa_tu$ contained in \eqref{hey3}, we can deduce that for every $t,s\in(\tau_*/2,\tau_*)$ it holds
\begin{equation}
  \label{lip}
  \|u(t)-u(s)\|_X\le C(\e,\de,u_0,\tau_*)\,|t-s|\,.
\end{equation}
Since $X$ is a Banach space, this means that there is $u_*\in X=C^0\cap W^{1,2}(\R^n;[0,1])$ such that
\[
\lim_{t\to\tau_*^-}\|u(t)-u_*\|_X=0\,.
\]
By combining this last fact with \eqref{hey3} we see that $u_*\in W^{3,p}(\R^n)$ for every $p\ge 2$, with
\[
\V_\de(u_*)=\lim_{t\to\tau_*^-}\V_\de(u(t))=\V_\de(u_0)>0\,.
\]
Therefore, for some $\eta>0$, we can repeat the argument of step five to extend $u$ as an element of $C^0([0,\tau_*+\eta);X)$ such that \eqref{mild solution of} holds for every $t\in(0,\tau_*+\eta)$. We can also repeat the proof of \eqref{vd1} and \eqref{vd2} to show that $\AC(u(t))$ is decreasing on $[\tau_*,\tau_*+\eta)$ and $\V_\de(u(t))$ is constant on $t\in[\tau_*,\tau_*+\eta)$. Since $\tau_*$ was  introduced as the supremum of those $\tau>0$ such that there exists $u\in C^0([0,\tau);X)$ with $u(0)=u_0$, solving \eqref{mild solution of} for every $t\in(0,\tau)$, and such that
\begin{equation}
  \label{contra}
  \sup_{0<t<\tau}\big\{\AC(u(t)),\V_\de(u(t))\big\}<\infty\,,\qquad\inf_{0<t<\tau}\V_\de(u(t))>0\,.
\end{equation}
and since we have just proved that the bounds stated in \eqref{contra} holds with $\tau=\tau_*+\eta$, thus reaching a contradiction with the maximality of $\tau_*$. This proves that $\tau_*=+\infty$. The monotonicity of $\AC(u(t))$ and constancy of $\V_\de(u(t))$ then hold on $[0,\infty)$, and the validity of \eqref{full estimate delta} is deduced by arguing as in the proof of  \eqref{hey3}.

\medskip

\noindent {\it Step three}: We prove, for an initial datum $u_0$ as in \eqref{data}, the existence of $u\in C^0(\R^n\times[0,\infty))$ with $u(0)=u_0$ which is a classical solution of \eqref{diffused VPMCF} on $\R^n\times(0,\infty)$, with $\V(u(t))=\V(u_0)=1$ for every $t\ge0$ and such that $t\mapsto\AC(u(t))$ is continuous and decreasing on $[0,\infty)$.

\medskip

Given $\e>0$ and $\de\in(0,\de_0]$, let us denote by $u^\de$ the unique global solution of \eqref{eq auxiliary flow} with $u^\de(0)=u_0$ constructed in step six. Recalling that $u^\de$ satisfies \eqref{full estimate delta}, and keeping in mind that $\V_\de(u_0)\to\V(u_0)=1$ by \eqref{limit of vdelta}, up to make $\de_0$ depend on $u_0$ too, we can ensure that $1/2\le\V_\de(u_0)\le 2$ for every $\de\in(0,\de_0]$, and thus deduce from \eqref{full estimate delta} that for every $(a,b)\subset(0,\infty)$ and $p\ge 2$, uniformly on $\de\in(0,\de_0]$,
\begin{eqnarray}
\label{full estimate delta bis}
&&\sup_{a<s<b}\big\{\|u^\de(s)\|_{W^{3,p}},\|\pa_tu^\de(s)\|_{W^{1,p}}\big\}\le C\big(\e,p,\AC(u_0),\|\nabla u_0\|_{W^{2,p}},a,b\big)\,.\hspace{1cm}
\end{eqnarray}
By Morrey's embedding theorem we can obtain a $C^{0,1/2}$-estimate on $\pa_tu^\de$ from \eqref{full estimate delta bis}, which combined with the $W^{1,2}$-estimate on $\pa_tu^\de$ contained in \eqref{full estimate delta bis} leads to prove that, for every $(a,b)\subset(0,\infty)$,
\begin{eqnarray}
\label{full estimate delta 2}
\|u^\de(r)-u^\de(s)\|_X\le C(\e,u_0,a,b)\,|r-s|,\qquad\forall r,s\in(a,b)\,,
\end{eqnarray}
uniformly on $\de\in(0,\de_0]$.

\medskip

If we now consider a sequence $\de_j\to 0^+$, then, up to extracting a subsequence, we deduce from \eqref{full estimate delta 2} and \eqref{full estimate delta bis} that there is $u\in \Lip_{\rm loc}((0,\infty);X)$ with
\begin{eqnarray}
\label{full estimate delta 2 u}
\sup_{r,s\in(a,b)}\frac{\|u(r)-u(s)\|_X}{|r-s|}\!\!&\le&\!\! C(\e,u_0,a,b)\,,
\\
\label{full estimate delta bis u}
\sup_{a<s<b}\big\{\|u(s)\|_{W^{3,p}},\|\pa_tu(s)\|_{W^{1,p}}\big\}
\!\!&\le&\!\!C(\e,p,\AC(u_0),\|\nabla u_0\|_{W^{2,p}},a,b\big)\,,
\end{eqnarray}
for every $p\ge 2$ and $(a,b)\subset(0,\infty)$, and such that
\begin{eqnarray}
 \label{full convergence}
  &&\lim_{j\to\infty}\sup_{a<s<b}\big\{\|u^{\de_j}(s)-u(s)\|_{W^{2,p}},\|\pa_tu^{\de_j}(s)-\pa_t u(s)\|_{L^p\cap C^0}\big\}=0\,,
\end{eqnarray}
for every $p\ge 2$ and $(a,b)\subset(0,\infty)$. By \eqref{full estimate delta bis u}, $u(t)\in C^{2,\a}(\R^n)$, $\pa_tu(t)\in C^{0,\a}(\R^n)$ for every $\a\in(0,1)$ and $t>0$. By \eqref{full convergence}, and since
\[
\e^2\,\pa_tu^{\de_j}=2\e^2\,\Delta u^{\de_j}- W'(u^{\de_j})+\e\,\l_{\e,\de_j}[u^{\de_j}(t)]\,V'_{\de_j}(u^{\de_j})\,,
\]
holds in classical sense on $\R^n\times(0,\infty)$, we can deduce that
\begin{equation}
  \label{limit equation}
  \e^2\,\pa_tu=2\,\e^2\,\Delta u-W'(u)+\e\,\l_{\e}[u(t)]\,V'(u)\,,
\end{equation}
also holds in classical sense on on $\R^n\times(0,\infty)$, provided we show that
\begin{equation}
  \label{provided}
  \lim_{j\to\infty}\l_{\e,\de_j}[u^{\de_j}(t)]=\l_\e[u(t)]\,,\qquad\forall t>0\,.
\end{equation}
To prove \eqref{provided} we first notice that, by $0\le V_\de(r)\le C\,r^2$ for $r\in[0,1]$ any by dominated convergence, for every $t>0$ it holds that $\V_{\de_j}(u^{\de_j}(t))\to\V(u(t))$ as $j\to\infty$. At the same time, for every $t>0$, $\V_{\de_j}(u^{\de_j}(t))=\V_{\de_j}(u_0)\to\V(u_0)$ as $j\to\infty$ so that, in summary,
\begin{equation}
  \label{volumeeee}
  \V(u(t))=\V(u_0)=1\,,\qquad\forall t>0\,.
\end{equation}
Now, by \eqref{eq holder}, \eqref{eq holder 2}, $\V_\de(u^\de(s))=\V_\de(u_0)$, \eqref{volumeeee}, and \eqref{full estimate delta bis u}, we have that
\begin{eqnarray*}
|\l_{\e,\de}[u^\de(t)]-\l_{\e,\de}[u(t)]|\!\!&\le&\!\!
C(\e,u_0,b)\,\frac{\|u^\de(t)-u(t)\|_{X}^{\gamma(n)}}{\min\{1,\V_\de(u_0)^{2n}\}\,\min\{1,\V_\de(u(t))\}^{2n}}\,,
\\
|\l_{\e,\de}[u(t)]-\l_{\e}[u(t)]|\!\!&\le&\!\!
C(\e,u_0,a,b)\, \frac{\|V_\de-V\|_{C^2[0,1]}}{\min\{1,\V_\de(u(t))^{2n}\}}\,,
\end{eqnarray*}
for all $t\in(a,b)\subset(0,\infty)$. Since $\V_\de(u(t))\in(0,\infty)$ for every $t>0$ and $\de\in(0,\de_0]$, by letting $\de=\de_j$ and $j\to\infty$ in the above two estimates we obtain \eqref{provided}, and thus \eqref{limit equation}. The constancy of $\V(u(t))$ and the monotonicity of $\AC(u(t))$ are then immediate to prove.

\medskip

We are left to prove that $u(0)=u_0$. To begin with, we notice that, by \eqref{eq upper bound lambda}, \eqref{vd1} and \eqref{vd2}, for all $t>0$ we have
\[
|\l_{\e,\de}[u^\de(t)]|\leq C(\e)\,\frac{\AC(u^\de(t))^{2n+2}}{\V_\delta(u^\de(t))^{2n}}
\leq C(\e)\,\frac{\AC(u_0)^{2n+2}}{\V_\delta(u_0)^{2n}}\,,
\]
so that \eqref{provided} and \eqref{limit of vdelta} imply
\begin{equation}
  \label{final bound on lambda eps pre}
|\l_\e[u(t)]|\le C(\e,u_0)\,,\qquad\forall t>0\,.
\end{equation}
Next, if we set
\begin{eqnarray}\label{def of Fu zero}
F[u](t)&=&-\frac{W'(u(t))}{\e^2}+\l_{\e}[u(t)]\,\frac{V'(u(t))}\e\,,
\\\label{def of Tu zero}
T[u](t)&=&S_tu_0+\int_0^tS_{t-s}[F[u](s)]\,ds\,,
\end{eqnarray}
then \eqref{limit equation} implies $u(t)=T[u(t)]S_tu_0+\int_0^tS_{t-s}[F[u](s)]\,ds$ for every $t>0$, and by the contraction properties of the heat flow, \eqref{utile 1}, \eqref{utile 2}, \eqref{utile 3}, and \eqref{final bound on lambda eps pre}, we find that
\begin{eqnarray*}
\|u(t)-u_0\|_X&\le&\|S_tu_0-u_0\|_X+C(\e)\,\int_0^t\big(1+|\l_\e[u(s)]\big)\,\|u(s)\|_X\,ds
\\
&\le& \|S_tu_0-u_0\|_X+C(\e,u_0)\,t\,\sup_{0<s<t}\|u(s)\|_X\,.
\end{eqnarray*}
By $u_0\in X$ and \eqref{full estimate delta bis u} we find that $\|u(t)-u_0\|_X\to 0$ as $t\to 0^+$, and thus that $u\in C^0(\R^n\times[0,\infty))$ with $u(0)=u_0$, as claimed. This completes the proof of step seven.

\medskip

\noindent {\it Step four}: We now prove that
\begin{equation}
\label{final bound on lambda eps}
|\l_\e[u(t)]|\le C\,\AC(u_0)^{2n+2}\,,\qquad\forall t>0\,,
\end{equation}
and that, for every $t_0>0$ and $p\ge 2$ we have
\begin{eqnarray}
\label{final hilbert estimate 1}
  \sup_{t\ge 0}\big\{\|u(t)\|_{W^{2,p}},\|\pa_tu(t)\|_{L^p}\big\}\le C(\e,p,u_0)\,,
  \\
\label{final hilbert estimate 2}
  \sup_{t\ge t_0}  \max\big\{\|u(t)\|_{W^{3,p}},\|\pa_tu(t)\|_{W^{2,p}},\|\pa_{tt}u(t)\|_{L^p}\big\}\le C(\e,p,u_0,t_0)\,.
\end{eqnarray}
Indeed, recalling that in step seven we have proved $\V(u(t))=1$ and $\AC(u(t))\le\AC(u_0)$ for every $t\ge0$, and recalling that the constant $C(\e)$ in \eqref{eq upper bound lambda} can be taken independent from $\e$ when \eqref{eq upper bound lambda} is applied with $\de=0$, we find that, for all $t>0$,
\[
|\l_\e[u(t)]|\leq C\,\frac{\AC(u(t))^{2n+1}}{\V(u(t))^{2n}}\,\int_{\R^n}|\nabla u|^2\le \frac{C}\e\,\AC(u_0)^{2n+2}\,,
\]
that is \eqref{final bound on lambda eps}. Similarly, we deduce from \eqref{eq l2 bound} (with $\de=0$) and $0\le u\le 1$ that
\begin{equation}
  \label{final Lp estimate}
  \sup_{t\ge0}\|u(t)\|_{L^p}\le C(\e,u_0)\,,\qquad\forall p\ge 2\,,
\end{equation}
while by combining \eqref{gradient one} with \eqref{final bound on lambda eps} and \eqref{final Lp estimate} we get
\begin{equation}
  \label{temp}
  \sup_{0\le t\le 1}\|\nabla u(t)\|_{L^p}\le C(\e,p,u_0)\,,\qquad\forall p\ge 2\,.
\end{equation}
Now, using the semigroup property of the heat flow we see that for every $t>s\ge0$ we have
\begin{equation}
  \label{representation st}
  u(t)=S_{t-s}u(s)+\int_s^tS_{t-r}\big[F[u](r)\big]\,dr\,.
\end{equation}
By differentiating \eqref{representation st}, and by using \eqref{gt L1}, \eqref{final bound on lambda eps} and \eqref{final Lp estimate}, we find that
\begin{eqnarray}\nonumber
  \|\nabla u(t)\|_{L^p}&\le&
  \|\nabla G(t-s)\|_{L^1}\,\|u(s)\|_{L^p}
  +\sup_{s<r<t}\|F[u](r)\|_{L^p}\,\int_s^t\|\nabla G(t-r)\|_{L^1}\,dr
  \\\label{temp 2}
  &\le& \frac{C(\e,u_0)}{(t-s)^{1/2}}+C(\e,u_0)\,(t-s)^{1/2}\,,
\end{eqnarray}
where we have used the analog to \eqref{Fu absolute value} with $\de=0$ in estimating $\|F[u](r)\|_{L^p}$. If $t\ge 1$, then we can apply \eqref{temp 2} with $s=t-1\ge0$ to deduce that $\|\nabla u(t)\|_{L^p}\le C(\e,u_0)$, which, combined with \eqref{temp}, gives
\begin{equation}
  \label{final grad Lp estimate}
  \sup_{t\ge0}\|\nabla u(t)\|_{L^p}\le C(\e,p,u_0)\,,\qquad\forall p\ge 2\,.
\end{equation}
Similarly, combining \eqref{gradient two} with \eqref{final bound on lambda eps} and \eqref{final grad Lp estimate} we see that, on the one hand
\begin{equation}
  \label{temp2}
  \sup_{0\le t\le 1}\|\nabla^2 u(t)\|_{L^p}\le C(\e,p,u_0)\,,\qquad\forall p\ge 2\,;
\end{equation}
on other hand, using again \eqref{representation st}, \eqref{gt L1}, and \eqref{final bound on lambda eps}, this time in combination with \eqref{final Lp estimate}, \eqref{DFu absolute value}, and \eqref{final grad Lp estimate}, we find that, if $t>s\ge0$, then
\begin{eqnarray*}
  \|\nabla^2u(t)\|_{L^p}&\le&
  \|\nabla^2G(t-s)\|_{L^1}\,\|u(s)\|_{L^p}
  +\sup_{s<r<t}\|\nabla F[u](r)\|_{L^p}\,\int_s^t\|\nabla G(t-r)\|_{L^1}\,dr
  \\
  &\le& \frac{C(\e,u_0)}{(t-s)^{3/2}}+C(\e,u_0)\,(t-s)^{1/2}\,,
\end{eqnarray*}
and using this last estimate for $t\ge 1$ (and with $s=t-1\ge0$), we find $\|\nabla^2u(t)\|_{L^p}\le C(\e,u_0)$ for every $t\ge 1$ and $p\ge 2$. By combining this last fact with \eqref{temp2} we have thus proved
\begin{equation}
  \label{final w22 estimate}
  \sup_{t>0}\big\{\|u(t)\|_{W^{2,p}},\|\pa_tu(t)\|_{L^p}\big\}\le C(\e,p,u_0)\,,\qquad\forall p\ge 2\,.
\end{equation}
Here the $L^p$-estimate on $\pa_t u(t)$ has been obtained by combining the $W^{2,p}$-estimate for $u(t)$ with $\pa_tu=2\,\Delta u+f$, where $f(t)=F[u(t)]$, and thus $\|f(t)\|_{L^p}\le C(\e,p,u_0)$ thanks to \eqref{Fu absolute value} and \eqref{final Lp estimate}. In fact, thanks also to \eqref{DFu absolute value}, \eqref{final bound on lambda eps}, and \eqref{final grad Lp estimate} for every $p\ge 2$, we have
\[
\sup_{t\ge 0}\|f(t)\|_{W^{1,p}}\le C(\e,p,u_0)\,.
\]
We can now repeat the argument based on the incremental ratios method and on the parabolic Calderon--Zygmund theorem used in the proof of \eqref{useful inter} to deduce that
\begin{equation*}
  \sup_{t>t_0}\big\{\|\nabla(\pa_tu)(t)\|_{L^p},\|\nabla^3u(t)\|_{L^p}\big\}
  \le C(p)\, \max\Big\{\frac{\|\nabla u_0\|_{L^p}}{t_0^{3/2}},\sup_{t>t_0}\|\nabla f(t)\|_{L^p}\Big\}\,,
\end{equation*}
and conclude, in summary, that, for every $p\ge 2$ and $t_0>0$,
\begin{equation}
  \label{final w32 estimate}
  \sup_{t>t_0}\big\{\|u(t)\|_{W^{3,p}},\|\pa_tu(t)\|_{W^{1,p}}\big\}\le C(\e,p,u_0,t_0)\,.
\end{equation}
The $W^{1,2}$-estimate on $\pa_tu(t)$ allows one to deduce by elementary means that
\begin{equation}
  \label{final ut us}
  \|u(t)-u(s)\|_{W^{1,2}(\R^n)}\le C(\e,u_0,t_0)\,|t-s|\,,\qquad\forall t,s>t_0\,.
\end{equation}
Combining \eqref{final ut us} with \eqref{eq lipschitz} (with $\de=0$) and \eqref{final w22 estimate} with $p=2$ we conclude that
\begin{equation}
  \label{final lambda eps Lip}
  |\l_\e[u(t)]-\l_\e[u(s)]|\le C(\e,u_0,t_0)\,|t-s|\,,\qquad\forall t,s>t_0\,,
\end{equation}
that is $t\mapsto\l_\e[u(t)]$ is Lipschitz continuous on $(t_0,\infty)$ for every $t_0>0$.

\medskip

To obtain $L^p$-estimates for $\pa_{tt}u(t)$ and $\nabla^2(\pa_tu)$ we need to differentiate $\pa_tu=2\,\Delta u+f$ in time. To this end, given $t_0>0$, we introduce that incremental ratio operator $T_h$ that acts on $v=v(x,t)$ by taking $T_hv(t)=(v(t_0+t+h)-v(t_0+t))/h$ for every $t\ge0$ and $h\in(-t_0,t_0)\setminus\{0\}$. With this notation, $\pa_tu=2\,\Delta u+f$ on $\R^n\times(0,\infty)$ implies that
\begin{eqnarray}
 \label{PDE for incratios in time}
  &&\pa_t(T_hu)-2\,\Delta(T_hu)=T_hf\,,\qquad\mbox{on $\R^n\times(0,\infty)$}\,,
  \\
  \nonumber
  &&T_hu(0)=\frac{u(t_0+h)-u(t_0)}h\,,\qquad\mbox{on $\R^n$}\,.
\end{eqnarray}
Setting
\[
u_{1,h}=S_t\Big[\frac{u(t_0+h)-u(t_0)}h\Big]\,,\qquad u_{2,h}=T_hu-u_{1,h}\,,
\]
we find that $u_{2,h}$ satisfies
\begin{equation}
  \label{PDE for u2h time}
  \pa_t u_{2,h}-2\,\Delta u_{2,h}=T_hf\,,\qquad\mbox{on $\R^n\times(0,\infty)$}\,,\qquad\mbox{$u_{2,h}(0)=0$ on $\R^n$}\,.
\end{equation}
By \cite[Corollary 7.31]{lieberman}, for every $(a,b)\subset(0,\infty)$ we have
\begin{eqnarray}\nonumber
  \int_a^b\,dt\int_{\R^n}|\nabla^2u_{2,h}(t)|^p+|\pa_tu_{2,h}(t)|^p\le
  C(p)\,\int_a^b\,dt\int_{\R^n}|T_hf(t)|^p\,.
\end{eqnarray}
Setting
\begin{equation}
  \label{that was g}
  g(t)=\Big\{-\frac{W''(u)}{\e^2}+\l_\e[u(t)]\,\frac{V''(u)}\e\Big\}\,v+\frac{V'(u)}\e\,\frac{d}{dt}\,\l_\e[u(t)]\,,
\end{equation}
by \eqref{final bound on lambda eps} and \eqref{final lambda eps Lip} we find that
\begin{eqnarray*}
\int_a^b\,dt\,\int_{\R^n}|T_hf(t)|^p\le\int_{a-|h|}^{b+|h|}dt\,\int_{\R^n}|g(t)|^p
\le C(\e,u_0,a)\,(b-a+2\,|h|)\,,
\end{eqnarray*}
so that, in summary,
\begin{eqnarray}\label{lieb x}
  \int_a^b\,dt\int_{\R^n}|\nabla^2u_{2,h}(t)|^p+|\pa_tu_{2,h}(t)|^p\le C(\e,p,u_0,a)\,(b-a+2\,|h|)\,.
\end{eqnarray}
At the same time, thanks to \eqref{final w22 estimate}, we have, for $|h|<t_0/2$ and $t>t_0$,
\begin{eqnarray*}
  \|\nabla^2 u_{1,h}(t)\|_{L^p}&\le&
  \|\nabla^2 G(t)\|_{L^1}\,\Big\|\frac{u(t_0+h)-u(t_0)}h\Big\|_{L^p}
  \\
  &\le&\frac{C}{t^{3/2}}\,\sup_{t_0-|h|<s<t_0+|h|}\|\pa_t u(s)\|_{L^p}\le \frac{C(\e,p,u_0)}{t_0^{3/2}}\,,
  \\
  \|\pa_t u_{1,h}(t)\|_{L^p}&\le&\|\pa_t G(t)\|_{L^1}\Big\|\frac{u(t_0+h)-u(t_0)}h\Big\|_{L^p}
  \le \frac{C(\e,p,u_0)}{t_0}\,,
\end{eqnarray*}
and thus, by $T_hu=u_{1,h}+u_{2,h}$ and \eqref{lieb x}
 \begin{eqnarray}
\label{lieb 2}
  \int_a^b\,dt\int_{\R^n}|\nabla^2(T_hu)(t)|^p+|\pa_t(T_hu)(t)|^p\le C(\e,p,u_0,a)\,(b-a+2\,|h|)\,.
\end{eqnarray}
Letting $h\to 0^+$ in \eqref{lieb 2} we obtain
\begin{eqnarray}
\nonumber
  \int_a^b\,dt\int_{\R^n}|\nabla^2(\pa_tu)(t)|^p+|\pa_{tt}u(t)|^p\le C(\e,p,u_0,a)\,(b-a)\,,
\end{eqnarray}
from which we easily conclude that, for every $t_0>0$ and $p\ge 2$,
\begin{equation}
  \sup_{t\ge t_0}\big\{\|\nabla^2(\pa_tu)(t)\|_{L^p},\|\pa_{tt}u(t)\|_{L^p}\big\}\le C(\e,p,u_0,t_0)\,.
\end{equation}
This last estimate, combined with \eqref{final w32 estimate} concludes the proof of \eqref{final hilbert estimate 2}.

\medskip

\noindent {\it Step five}: We prove the uniqueness of the solution $u$ of \eqref{diffused VPMCF} constructed in step seven. Indeed, let $v$ be another solution of \eqref{diffused VPMCF} with $v(0)=u_0$. By combining \eqref{eq lipschitz} with $\V(u(t))=\V(v(t))=\V(u_0)=1$ and with \eqref{final w22 estimate} with $p=2$ for both $u$ and $v$, we find that, for all $t>0$,
\begin{equation}\label{eq contraction lagrange}
	|\l_\e[u(t)]-\l_\e[v(t)]|\leq C(\e,u_0)\,\|u(t)-v(t)\|_{W^{1,2}}\,.
\end{equation}
Since $w=u-v$ satisfies $\pa_tw-2\,\Delta w=F[u]-F[v]$ with $w(0)=0$, by the Duhamel formula \eqref{duhamel formula}, for all $t\ge0$ we can represent $w(t)$ as
\begin{equation}
  \label{represent w}
  w(t)=\int_0^tS_{t-s}\big[F[u](s)-F[v](s)\big]\,ds\,.
\end{equation}
Notice that by using, in the order, \eqref{final bound on lambda eps}, $0\le V'(r)\le C\,r$ for $r\in[0,1]$, and \eqref{eq contraction lagrange}, we find
\begin{eqnarray*}
  |F[u](s)-F[v](s)|&\le& C(\e)\,\Big\{\Lip(W')+|\l_\e[u(s)]|\,\Lip(V')\Big\}\,|w(s)|
  \\
  &&+V'(v(s))\,|\l_\e[u(s)]-\l_\e[v(s)]|
  \\
  &\le& C(\e,u_0)\,\big\{|w(s)|+ \|w(s)\|_{W^{1,2}}\,|v(s)|\big\}\,,
\end{eqnarray*}
so that, by \eqref{final grad Lp estimate} with $p=2$ applied to $v$,
\begin{equation}
  \label{FusFvs}
  \|F[u](s)-F[v](s)\|_{L^2}\le C(\e,u_0)\,\|w(s)\|_{W^{1,2}}\,,\qquad\forall s>0\,.
\end{equation}
Setting
\[
a(t)=\sup_{0<s<t}\|w(s)\|_{W^{1,2}}
\]
and combining \eqref{FusFvs} with \eqref{gt L1} we deduce from \eqref{represent w} that, if $0<s<t$, then
\begin{eqnarray*}
  \|w(s)\|_{L^2}&\le&\int_0^s\,\|F[u](r)-F[v](r)\|_{L^2}\,dr\le  C(\e,u_0)\,\int_0^s \|w(r)\|_{W^{1,2}}\,dr
  \\
  &\le&C(\e,u_0)\,t\,a(t)\,,
  \\
  \|\nabla w(s)\|_{L^2}&\le&\sup_{0<r<s}\|F[u](r)-F[v](r)\|_{L^2}\,\int_0^s\|\nabla G(s-r)\|_{L^1}\,ds
  \\
  &\le& C(\e,u_0)\,\sqrt{s}\,\sup_{0<r<s}\|w(r)\|_{W^{1,2}}\le C(\e,u_0)\,\sqrt{t}\,a(t)\,.
\end{eqnarray*}
Combining this last two estimates, we find that $a(t)\le C(\e,u_0)\,\sqrt{t}\,a(t)$ for every $t>0$. In particular, setting $t_0=1/4\,C(\e,u_0)^2$, if $a(t)>0$ for some $t\in(0,t_0)$, then a contradiction follows. We have thus proved that $u(t)=v(t)$ on $\R^n$ for every $t\in(0,t_0)$. The argument can of course be iterated (using $t_0$ and then integer multiples of $t_0$ as initial times) to prove that $u(t)=v(t)$ on $\R^n$ for every $t>0$.

\medskip

\noindent {\it Conclusion}: Statement (i) was proved in \eqref{eq reg bounds uniform} and \eqref{eq reg bounds}. Statement (ii) was proved in \eqref{final bound on lambda eps} and \eqref{final lambda eps Lip}. Statement (iii) was proved in \eqref{volumeeee}. Moreover, we have
\begin{eqnarray*}
\frac{d}{dt}\AC(u(t))&=&\int_{\R^n}\Big\{2\,\e\,\nabla u\cdot\pa_t(\nabla u)+\frac{W'(u)}\e\,\pa_tu\Big\}(t)
\\
&=&-\int_{\R^n}\pa_t u(t)\,\Big\{2\,\e\,\Delta u-\frac{W'(u)}\e\Big\}(t)
\\
&=&-\int_{\R^n}\pa_t u(t)\,\Big\{\e\,\pa_tu+\l_\e[u(t)]\,V'(u)\Big\}(t)
\\
&=&-\e\,\int_{\R^n}(\pa_tu(t))^2+\l_\e[u(t)]\,\int_{\R^n}\pa_tu(t)\,V'(u(t))=- \e \, \int_{\R^n}(\pa_tu(t))^2\,,
\end{eqnarray*}
where we have used the regularity properties of $u$ to differentiate in time and to apply the divergence theorem, \eqref{diffused VPMCF}, and, in the last identity, statement (iii), to deduce that $0=(d/dt)\V(u(t))=\int_{\R^n}\pa_tu(t)\,V'(u(t))$. This proves statement (iv).

\medskip

We prove that $0<u<1$ on $\R^n\times(0,\infty)$, i.e. statement (v). Since $V, W \in C^{2}[0,1]$ with $V'(1)=W'(1)=0$ and $|\l_\e[u(t)]|\le C(\e,u_0)$ for all $t>0$, we can find a positive constant $K=K(\e,u_0)$ such that $r\mapsto K\,r-W'(r)+\e \lambda_\e[u(t)]\,V'(r)$ is strictly increasing on $[0,1]$. Correspondingly,
\[
K\,u-W'(u)+\e \lambda_\e[u(t)]\,V'(u)\leq K\,,\qquad\mbox{on $\R^n\times(0,\infty)$}\,,
\]
which, combined with \eqref{diffused VPMCF}, implies
\[
\e^2\partial_t u=2\,\e^2\,\Delta u-W'(u)+\e\,\l_\e[u(t)]\,V'(u)\le 2\,\e^2\,\Delta u-K\,u+K
\]
that is, $v=1-u$ is a non-negative solution of $\e^2(\pa_t-2\,\Delta)v+K\,v\ge0$ on $\R^n\times(0,\infty)$. By the strong maximum principle, either $v\equiv 0$ or $v>0$ on $\R^n\times(0,\infty)$, where the first option is excluded a priori since $V(1)=1$ and $\V(u(t))$ is finite for every $t>0$. We conclude that $u<1$ on $\R^n\times(0,\infty)$, and argue analogously for proving $u>0$ on $\R^n\times(0,\infty)$.

\medskip

We finally prove statement (vi). The argument of step eight shows that $v=\pa_tu$ satisfies $\pa_tv-2\,\Delta v=g$ for $g$ as in \eqref{that was g}. By testing this equation with $\pa_tu$ and integrating by parts,
\begin{eqnarray}\nonumber
\e^2\, \int_{\R^n} \big(\pa_tu\,\pa_{tt}u\big)(t)\!\!\!&=&\!\!\! - 2\,\e^2\,\int_{\R^n} |\nabla(\pa_tu)|^2 -\int_{\R^n}\Big\{W''(u)-\e\,\lambda_\e[u(t)]\,V''(u)\Big\}(t)\,(\pa_tu(t))^2
\\\label{eq bak0}
&&+\e\,\Big(\frac{d}{dt}\,\l_\e[u(t)]\Big)\int_{\R^n}V'(u(t))\,\pa_tu(t)\,,
\end{eqnarray}
where the last integral is equal to zero since $\V(u(t))=0$ for every $t>0$. This implies the validity of \eqref{dissipation formula}. Setting $b(t)=\int_{\R^n}(\pa_tu(t))^2$, \eqref{dissipation formula} combined with \eqref{eq reg bounds} implies that $b'\in L^1(t_0,\infty)$ for every $t_0>0$. Since \eqref{entropy derivative} and the monotonicity of $\AC(u(t))$ imply $b\in L^1(0,\infty)$, we have proved that $b\in W^{1,1}(t_0,\infty)$ for every $t_0>0$. This completes the proof of the theorem.
\end{proof}

\section{Subsequential bubbling resolution (Proof of Theorem \ref{thm bubbling general})} \label{section bubbling general}
\begin{proof}[Proof of Theorem \ref{thm bubbling general}] Let $u_0 \in W^{2,p}(\R^n;[0,1])$ for all $p\ge 2$, with $\V(u_0)=1$. By Theorem \ref{thm existence and regularity}, there is a unique solution $u$ to the diffused VPMCF \eqref{diffused VPMCF} with initial datum $u_0$, satisfying the various statements (i)--(vi) listed therein.

\medskip

Given $\{t_j\}_{j\in \N}$ with $t_j\to \infty$ as $j\to \infty$, we now want to prove that, up to extracting a subsequence, there are $M\in \N$, $\ell_\e>0$ such that
\begin{equation}
\label{thm21 leps utk to leps}
\ell_\e=\lim_{j\to \infty} \lambda_\e[u(t_j)]\,,
\end{equation}
sequences $\{x^i_j\}_{j\in \N}$ ($i=1,...,M$) satisfying $|x^i_j- x^k_j|\to \infty$ as $j\to\infty$ ($i\ne k$), and strictly radially decreasing solutions $\{\xi_i\}_{i=1}^M$ of
\begin{equation}\label{thm21 PDE xi}
	2\,\e^2\,\Delta \xi_i =W'(\xi_i)-\e\,\ell_\e\, V'(\xi_i)\quad\mbox{on $\R^n$}\,,
\end{equation}
with
\begin{eqnarray}
\label{thm21 xi vol and energy}
\sum_{i=1}^M \V(\xi_i)=1\,,\qquad \sum_{i=1}^M\AC(\xi_i) \leq \AC(u_0)\,.
\end{eqnarray}
such that, for all $p>2$,
\begin{eqnarray}
\label{thm21 utk limit to xii}
&&\lim_{j\to\infty}\Big\| u(t_j)- \sum_{i=1}^M \tau_{x_i^j}[\xi_i]\Big\|_{(W^{2,p}\cap W^{1,2})(\R^n)}=0\,.
\end{eqnarray}
This will be proved in step one through five. Finally, in step six, we shall prove that, if $\spt\,u_0\cc\R^n$, then $M=1$ and $x_j^1\to x_*$ as $j\to \infty$ for some $x_*\in\R^n$.

\medskip

\noindent {\it Step one}: We start by proving that, if $\xi \in W^{1,2}(\R^n;[0,1])\setminus \{0\}$ and $\ell\in\R$ satisfy
\begin{equation}\label{PDE uinfty}
	2\,\e ^2\,\Delta \xi = W'(\xi)-\e\,\ell\, V'(\xi)\quad \mbox{on $\R^n$},
\end{equation}
then $\ell>0$, $\xi$ is strictly radially decreasing with respect to some $x_0\in\R^n$, and
\begin{equation}\label{eq lower bound}
	\V(\xi) \geq \frac{1}{C\, \ell^n}\,.
\end{equation}
Indeed, $\xi\in W^{2,p}(\R^n)$ for every $p\ge 2$ thanks to the Calderon--Zygmund theorem, $\|\xi\|_{L^p}\le\|\xi\|_{L^2}<\infty$ (as $0\le \xi\le 1$ on $\R^n$), and $|W'(r)|,|V'(r)|\le C\,t$ for all $r\in[0,1]$. Since $\xi\in W^{2,p}(\R^n)$ we have enough regularity to test \eqref{PDE uinfty} with $\vphi=X\cdot\nabla \xi$ for $X(x)=\eta(x/R)\,x$, $\eta$ a cut-off function between $B_1$ and $B_2$, and $R>0$, and deduce that (see \cite{luckhausmodica89}, or \cite[Step five, Proof of Theorem 2.1]{maggi-restrepo}, for the details)
\begin{equation}\label{eq formulalagrange}
n\,\ell\,\V(\xi) =n\, \AC(\xi)-2\,\e\,\int_{\R^n} |\nabla \xi|^2\,.
\end{equation}
Since $\xi \neq 0$ and $n\ge 2$, the right-hand side of \eqref{eq formulalagrange} is strictly positive, thus proving that $\ell>0$. Moreover, since $\xi\in W^{2,p}(\R^n)$, we have that $\xi(x)\to 0$ as $|x|\to \infty$, so that the moving planes method of \cite{gidas1981symmetry} can be applied (see for example \cite[Theorem 6.2-(i)]{maggi-restrepo}) to deduce that $\xi$ is strictly radially decreasing with respect to some $x_0\in\R^n$.

\medskip

We finally prove \eqref{eq lower bound}. If $n\ge 3$, then we can combine \eqref{eq formulalagrange} with \eqref{modicamortola} to find
\[
n\,\ell\,\V(\xi)\ge (n-2)\,\AC(\xi)>(n-2)\,n\,\om_n^{1/n}\,\V(\xi)^{(n-1)/n}\,,
\]
which immediately gives \eqref{eq lower bound}. In the case $n=2$, we argue as follows. We test \eqref{PDE uinfty} with $\vphi_k\,\xi$ for $\vphi_k\in C^\infty_c(B_{k+2};[0,1])$ with $\vphi_k=1$ on $B_k$ and $|\nabla\vphi_k|\le 1_{B_{k+2}\setminus B_k}$ for each $k\in\N$. In this way we find
\[
2\,\e^2\,\int_{\R^n}\vphi_k\,|\nabla \xi|^2\le2\,\e^2\,\int_{\R^n}\xi\,|\nabla \xi|\,|\nabla\vphi_k|
+\int_{\R^n}\vphi_k\,\xi\,W'(\xi)+\e\,\ell\,\vphi_k\,\xi\,V'(\xi)\,.
\]
Letting $k\to\infty$ (and using monotone convergence for all the integrals but the one involving $W'$, which may be negative, and is dealt with by dominated convergence), we find that
\[
2\,\e^2\,\int_{\R^n}|\nabla \xi|^2\le\int_{\R^n}\xi\,W'(\xi)+\e\,\ell\,\xi\,V'(\xi)\,.
\]
Thanks to \eqref{W basic} there is $\de_0>0$ such that $W'(r)\le 0$ for all $r\in[1-\de_0,1]$ and $r\,|W'(r)|\le C\,r^2\le C\,W(r)$ for all $r\in[0,1]$. Therefore, $r\,W'(r)\le C\,W(r)$ for all $r\in[0,1]$, and, taking also into account that $r\,V'(r)\le C\,V(r)$ for all $r\in[0,1]$, we deduce that
\begin{equation}
  \label{comboo}
  2\,\e^2\,\int_{\R^n}|\nabla \xi|^2\le C\,\Big\{\e\,\ell\,\V(\xi)+\int_{\R^n}W(\xi)\Big\}\,.
\end{equation}
Since, when $n=2$, \eqref{eq formulalagrange} boils down to $\e\,\ell\,\V(\xi)=\int_{\R^n}W(\xi)$, we have finally proved $C\,\ell\,\V(\xi)\ge \AC(\xi)$. Again by \eqref{modicamortola}, as in the case $n\ge 3$, we deduce \eqref{eq lower bound}.

\medskip

\noindent {\it Step two}: We now begin the proof of \eqref{thm21 leps utk to leps}, \eqref{thm21 PDE xi}, \eqref{thm21 xi vol and energy}, and \eqref{thm21 utk limit to xii}  by discussing a compactness argument aimed at ``extracting one bubble'' from $u(t_j)$ as $t_j\to\infty$. In later steps we will of course discuss the iteration of this argument.

\medskip

We begin by recalling that, by Theorem \ref{thm existence and regularity}-(i,ii), for all $p\ge 2$ and $\a\in(0,1)$ we have
\begin{eqnarray}
  \label{reg1}
  &&\sup_{t\ge0}\big\{\|u(t)\|_{W^{2,p}(\R^n)},\|u(t)\|_{C^{1,\a}(\R^n)},|\l_\e[u(t)]|\big\}<\infty\,,
  \\
  \label{reg2}
  &&\sup_{t\ge1}\big\{\|u(t)\|_{W^{3,p}(\R^n)},\|u(t)\|_{C^{2,\a}(\R^n)}\big\}<\infty\,.
\end{eqnarray}
By \eqref{reg1}, $\sup_{\R^n}u(t)$ is achieved for every $t>0$, and we actually claim that
\begin{equation}
  \label{one point}
  \max_{\R^n}u(t)\ge\min\Big\{\frac12,\Big(\frac1{C\,\e\,\AC(u_0)}\Big)^{(n-1)/2}\Big\}\,,\qquad\forall t>0\,.
\end{equation}
Indeed, setting for brevity $\beta(t)=\max_{\R^n}u(t)$, if $\beta(t)\le 1/2$, then we can use the elementary estimate
\[
V(r)\le C\,r^{2n/(n-1)}\le C\,r^{2/(n-1)}\,r^2\le C\,r^{2/(n-1)}\,W(r)\,,\qquad\forall r\in[0,1/2]\,,
\]
to deduce from Theorem \ref{thm existence and regularity}-(iii,iv) that, for all $t>0$,
\begin{eqnarray*}
1&=&\V(u(t))\le C\,\int_{\R^n}u(t)^{2/(n-1)}\,W(u(t))\le C\,\beta(t)^{2/(n-1)}\,\e\,\AC(u(t))
\\
&\le&C\,\beta(t)^{2/(n-1)}\,\e\,\AC(u_0)\,.
\end{eqnarray*}
This proves \eqref{one point}. Combining \eqref{one point} with \eqref{reg1} we find $\beta_0=\beta_0(\e,u_0)\in(0,1/2]$ and $r_0=r_0(\e,u_0)>0$ such that for each $t>0$ there is $x_t\in\R^n$ with the property that
\begin{equation}
  \label{one point as a ball}
  u(t)\ge \beta_0\qquad\mbox{on $B_{r_0}(x_t)$}\,.
\end{equation}
In particular, given $t_j\to\infty$ as $j\to\infty$, then, thanks to \eqref{reg1}, \eqref{reg2} and \eqref{one point as a ball}, and up to extracting subsequences, we can find $\ell_\e\in\R$, $\xi_1\in\cap_{p\ge 2}W^{3,p}(\R^n;[0,1])\setminus\{0\}$, and a sequence $(x_j^1)_j$ in $\R^n$ such that, as $j\to\infty$,
\begin{eqnarray}
  \label{conv1}
  &&\lim_{j\to\infty}\l_\e[u(t_j)]=\ell_\e\,,
  \\
  &&\label{weak conv 1}
  \mbox{$\tau_{(-x_j^1)}[u(t_j)]\weak\xi_1$ weakly in $W^{3,p}(\R^n)$ as $j\to\infty$}\,,
  \\
  \label{conv2}
  &&\lim_{j\to\infty}\big\|\tau_{(-x_j^1)}[u(t_j)]-\xi_1\big\|_{W^{2,p}(B_R)}=0\,,\qquad\forall p\ge 2\,,\forall R>0\,.
\end{eqnarray}
By Theorem \ref{thm existence and regularity}-(vi), if we first take $t=t_j$ in
\begin{equation}
  \label{t eq}
  \pa_tu(t)=2\,\e^2\,\Delta u(t)-W'(u(t))+\e\,\l_\e[u(t)]\,V'(u(t))\quad\mbox{on $\R^n$}\,,
\end{equation}
and then let $j\to\infty$, we deduce that
\[
2\,\e^2\,\Delta\xi_1=W'(\xi_1)-\e\,\ell_\e\,V'(\xi_1)\quad\mbox{on $\R^n$}\,.
\]
By step one, we conclude that $\ell_\e>0$, that $\xi_1$ is strictly radially decreasing, and that
\begin{equation}\label{eq lower bound xi1}
	\V(\xi_1) \geq \frac{1}{C\, \ell_\e^n}\,.
\end{equation}
%Finally, by \eqref{conv2} and by lower semicontinuity we see that
%\begin{equation}
%  \label{energy xi1}
%  \AC(u_0)\ge\liminf_{j\to\infty}\AC(u[t_j])=\liminf_{j\to\infty}\AC\big(\tau_{(-x_j^1)}[u(t_j)]\big)\ge\AC(\xi_1)\,,
%\end{equation}
%that is $\AC(\xi_1)\le\AC(u_0)$.

\medskip

\noindent {\it Step three}: We now iterate the construction of step two. Let us consider the following statement, depending on $k\in\N$:

\medskip

\noindent $(S)_k$: There are $\{(x^i_j)_j\}_{i=1}^k$ sequences in $\R^n$ with $|x^i_j-x_j^\ell|\to\infty$ as $j\to\infty$ ($i\ne\ell$), and there are $\{\xi_i\}_{i=1}^k$ radially symmetric decreasing functions solving \eqref{thm21 PDE xi}, such that
\begin{equation}
  \label{equality in}
  \sum_{i=1}^k\V(\xi_i)\le 1\,,
\end{equation}
and, for each $i=1,...,k$,
\begin{eqnarray}
&&\nonumber
  \mbox{$\tau_{(-x_j^i)}[u(t_j)]\weak\xi_i$ weakly in $W^{3,p}(\R^n)$ as $j\to\infty$}\,,
  \\
  \label{strong conv loc i}
  &&\lim_{j\to\infty}\big\|\tau_{(-x_j^i)}[u(t_j)]-\xi_i\big\|_{W^{2,p}(B_R)}=0\,,\qquad\forall R>0\,,\forall p\ge 2\,.
\end{eqnarray}

\medskip

Notice that if $(S)_k$ holds with equality in \eqref{equality in} for some $k\in\N$, then the theorem is proved (up to showing the validity of \eqref{thm21 utk limit to xii}) with $M=k$. We also notice that, in step two, we have proved that $(S)_1$ holds. In this step we prove that if $(S)_k$ holds for some $k\in\N$ with strict sign in \eqref{equality in}, that is, with
\[
\sum_{i=1}^k\V(\xi_i)<1\,,
\]
then, up to extracting a subsequence in $j$, we can find $(x_j^{k+1})_j$ and $\xi_{k+1}$ such that $(S)_{k+1}$ holds. Since, by step one, $\V(\xi_i)\ge(1/C\,\ell_\e)^n$ for each $i=1,...,k+1$, we conclude that, for some $M\in\N$, $(S)_M$ must hold with equality in \eqref{equality in}.

\medskip

So let us consider $k\in\N$ such that $(S)_k$ holds with strict sign in \eqref{equality in}, and let $m=\sum_{i=1}^k\V(\xi_i)$. Given $h\in\N$, let
\[
A_j^h=\bigcup_{i=1}^k B_h(x_j^i)\,.
\]
By \eqref{strong conv loc i}, for every $h\in\N$ there is $j(h)\in\N$ such that
\[
\V(u(t_j);B_h(x_j^i))\ge \V(\xi_i;B_h)-\frac{1-m}{2k}\,,\qquad\forall i=1,...,k,\forall j\ge j(h)\,,
\]
so that if, $j\ge j(h)$,
\begin{eqnarray*}
  1-m&\le&1-\sum_{i=1}^k\V(\xi_i;B_h)\le\frac{1-m}2+1-\sum_{i=1}^k\V\big(u(t_j);B_h(x_j^i)\big)
  \\
  &=&\frac{1-m}2+\V(u(t_j);\R^n\setminus A_j^h)\,,
\end{eqnarray*}
that is
\[
\frac{1-m}2\le\V(u(t_j);\R^n\setminus A_j^h)\,,\qquad\forall h\in\N\,,\forall j\ge j(h)\,,
\]
If we now set
\[
\beta_j^h=\max_{\R^n\setminus A_j^h}u(t_j)\,,
\]
then, either $\beta_j^h\ge 1/2$ or, by arguing as in step two, for every $h\in\N$ and $j\ge j(h)$,
\begin{eqnarray*}
\frac{1-m}2\le \V(u(t_j);\R^n\setminus A_j^h)
\le C\,(\beta_j^h)^{2/(n-1)}\,\int_{\R^n\setminus A_j^h}W(u(t_j))
\le C\,(\beta_j^h)^{2/(n-1)}\,\e\,\AC(u_0)\,,
\end{eqnarray*}
that is,  for every $h\in\N$ and $j\ge j(h)$,
\[
\beta_j^h
\ge\min\Big\{\frac12,\Big(\frac{1-m}{C\,\e\,\AC(u_0)}\Big)^{(n-1)/2}\Big\}=:\beta_*>0\,.
\]
In particular, up to extracting a subsequence in $j$, for each $j$ we can find $x_j^{k+1}\in\R^n$ such that
\[
\inf_{i=1,...,k}\,|x_j^{k+1}-x_j^i|\ge j\,,\qquad u_j(x_j^{k+1},t_j)\ge \beta_*\,,\qquad\forall j\,;
\]
and then, thanks to \eqref{reg1}, we can find $r_*>0$ such that
\[
u(t_j)\ge \frac{\beta_*}2\quad\mbox{on $B_{r_*}(x_j^{k+1})$}\,,\qquad\forall j\,.
\]
Hence, by \eqref{reg2}, there exists $\xi_{k+1}\in\cap_{p\ge 2}W^{3,p}(\R^n;[0,1])\setminus\{0\}$ such that
\begin{eqnarray}
  &&\label{weak conv k+1}
  \mbox{$\tau_{(-x_j^{k+1})}[u(t_j)]\weak\xi_{k+1}$ weakly in $W^{3,p}(\R^n)$ as $j\to\infty$}\,,
  \\
  \label{strong conv loc k+1}
  &&\lim_{j\to\infty}\big\|\tau_{(-x_j^{k+1})}[u(t_j)]-\xi_{k+1}\big\|_{W^{2,p}(B_R)}=0\,,\qquad\forall R>0\,,\forall p\ge 2\,,
\end{eqnarray}
and by the same argument of step two we see that $\xi_{k+1}$ satisfies
\[
2\,\e^2\,\Delta\xi_{k+1}=W'(\xi_{k+1})-\e\,\ell_\e\,V'(\xi_{k+1})\quad\mbox{on $\R^n$}\,,
\]
and thus
\begin{equation}\label{eq lower bound xi2}
	\V(\xi_{k+1}) \geq \frac{1}{C\, \ell_\e^n}\,.
\end{equation}
We have thus proved the desired induction step.

\medskip

\noindent {\it Step four}: We have so far proved the existence of $M\in\N$, $\ell_\e>0$, $\{(x_j^i)_j\}_{i=1}^M$ sequences in $\R^n$, and $\{\xi_i\}_{i=1}^M$ radially symmetric decreasing functions on $\R^n$ such that, up to extracting a subsequence, \eqref{thm21 leps utk to leps}, and \eqref{thm21 PDE xi} hold, together with part of \eqref{thm21 xi vol and energy} (i.e., $1=\sum_{i=1}^M\V(\xi_i)$), and
\begin{eqnarray}
  &&\nonumber
  \mbox{$\tau_{(-x_j^i)}[u(t_j)]\weak\xi_i$ weakly in $W^{3,p}(\R^n)$ as $j\to\infty$}\,,
  \\
  \label{conv conv}
  &&\lim_{j\to\infty}\big\|\tau_{(-x_j^i)}[u(t_j)]-\xi_i\big\|_{W^{2,p}(B_R)}=0\,,\qquad\forall R>0\,,p\ge 2\,,i=1,...,M\,.
\end{eqnarray}
To complete the proof of \eqref{thm21 xi vol and energy} we just need to notice that, by \eqref{conv conv}, for every $R>0$
\begin{eqnarray*}
\sum_{i=1}^M\AC(\xi_i;B_R)&\le&
\liminf_{j\to\infty}\sum_{i=1}^M\AC\big(\tau_{-x_j^i}[u(t_j)];B_R\big)
=
\liminf_{j\to\infty}\sum_{i=1}^M\AC\big(u(t_j);B_R(x_j^i)\big)
\\
&=&
\liminf_{j\to\infty}\AC\Big(u(t_j);\bigcup_{i=1}^MB_R(x_j^i)\Big)\le\AC(u_0)\,,
\end{eqnarray*}
where in the last equality we have used $|x_j^i-x_j^\ell|\to\infty$ as $j\to\infty$ if $i\ne\ell$, and in the last inequality we have used the monotonicity of $t\mapsto\AC(u(t))$. By arbitrariness of $R$, we have completed the proof of \eqref{thm21 xi vol and energy}. We are thus left to prove \eqref{thm21 utk limit to xii}. In this step, we shall prove a slightly weaker statement, namely
\begin{eqnarray}
\label{thm21 utk limit to xii p2}
&&\lim_{j\to\infty}\Big\| u(t_j)- \sum_{i=1}^M \tau_{x_i^j}[\xi_i]\Big\|_{W^{2,p}(\R^n)} =0\,,\qquad\forall p> 2\,.
\end{eqnarray}
We will then improve \eqref{thm21 utk limit to xii p2} to \eqref{thm21 utk limit to xii} in step five. (This improvement is of crucial importance in discussing convergence to equilibrium of the flow.) We begin the proof of \eqref{thm21 utk limit to xii p2} by showing that, with $n'=n/(n-1)$,
\begin{equation}
  \label{l2nprime conv}
  \lim_{j\to\infty}\|u(t_j)-v_j\|_{L^{2n'}(\R^n)}=0\,,\qquad\mbox{where}\,\, v_j=\sum_{i=1}^M\tau_{x_j^i}[\xi_i]\,.
\end{equation}
Indeed, for every $\s>0$ we can find $R>0$ such that
\begin{equation}
  \label{choice of R}
  \V(\xi_i;B_R)\ge (1-\s)\,\V(\xi_i)\,,\qquad\forall i=1,...,M\,.
\end{equation}
Setting $A_j^R=\R^n\setminus\cup_{i=1}^MB_R(x_j^i)$, for $j$ large enough to have $B_R(x_j^i)\cap B_R(x_j^\ell)=\emptyset$ ($i\ne\ell$), we  have
\begin{eqnarray}\nonumber
\frac1{C(M)}\,\int_{\R^n}|u(t_j)-v_j|^{2n'}\!\!&\le&\!\! \int_{ A_j^R}|u(t_j)|^{2n'}+|v_j|^{2n'}
+\sum_{i=1}^M\int_{B_R(x_j^i)}|u(t_j)-\tau_{x_j^i}[\xi_i]|^{2n'}
\\\label{fine0}
&&\!\!+\sum_{i=1}^M\int_{B_R(x_j^i)}\Big|\sum_{\ell\ne i}\tau_{x_j^\ell}[\xi_\ell]\Big|^{2n'}\,.
\end{eqnarray}
We first notice that, since $r^{2n'}\le C\,V(r)$ for all $r\in[0,1]$,,
\begin{eqnarray}\nonumber
\int_{A_j^R}|v_j|^{2n'}&\le&C(M)\,\sum_{i=1}^M\int_{A_j^R}|\tau_{x_j^i}[\xi_i]|^{2n'}
\le C(M)\,\sum_{i=1}^M\int_{\R^n\setminus B_R(x_j^i)}|\tau_{x_j^i}[\xi_i]|^{2n'}
\\\label{fine1}
&\le& C(M)\,\sum_{i=1}^M\int_{\R^n\setminus B_R}V(\xi_i)\le C(M)\,\s\,\sum_{i=1}^M\V(\xi_i)=C(M)\,\s;
\end{eqnarray}
similarly,
\begin{eqnarray*}
\frac1{C}\int_{A_j^R}|u(t_j)|^{2n'}&\le&\int_{A_j^R}V(u(t_j))=1-\sum_{i=1}^M\V(u(t_j);B_R(x_j^i))
\\
&=&\sum_{i=1}^M\V(\xi_i)-\V(u(t_j);B_R(x_j^i))
=\sum_{i=1}^M\V(\xi_i)-\V(\tau_{-x_j^i}[u(t_j)];B_R)
\end{eqnarray*}
so that, by \eqref{conv conv},
\begin{equation}
  \label{fine2}
  \limsup_{j\to\infty}\int_{A_j^R}|u(t_j)|^{2n'}\le C\,\sum_{i=1}^M \V(\xi_i;\R^n\setminus B_R)\le C\,M\,\s\,.
\end{equation}
Coming to the last term in \eqref{fine0} we see that
\begin{eqnarray}
\label{fine3}
  &&\sum_{i=1}^M\int_{B_R(x_j^i)}\Big|\sum_{\ell\ne i}\tau_{x_j^\ell}[\xi_\ell]\Big|^{2n'}
\le C(M)\,\sum_{i=1}^M\sum_{\ell\ne i}\int_{B_R(x_j^i)}\big|\tau_{x_j^\ell}[\xi_\ell]\big|^{2n'}
\\\nonumber
&&
\le C(M)\,\sum_{i=1}^M\sum_{\ell\ne i}\int_{B_R(x_j^i-x_j^\ell)}\big|\xi_\ell\big|^{2n'}\,,
\end{eqnarray}
where the last integral converges to zero since $|x_j^i-x_j^\ell|\to 0$ as $j\to\infty$ by $i\ne\ell$, and since $\xi_\ell\in L^{2n'}(\R^n)$. Finally, since $2n'\ge2$ we can use \eqref{conv conv} to address the second integral in \eqref{fine0}, and conclude from \eqref{fine0}, \eqref{fine1}, \eqref{fine2} and \eqref{fine3} that
\[
\limsup_{j\to\infty}\|u(t_j)-v_j\|_{L^{2n'}(\R^n)}\le C(M)\,\s\,,\qquad\forall\s>0\,,
\]
that is \eqref{l2nprime conv}. Now, by \eqref{eq l2 bound} we have that
\[
\|u(t_j)-v_j\|_{L^2(\R^n)}\le C(\e)\,\Big\{\AC(u(t_j))+\V(u(t_j))+\sum_{i=1}M\AC(\xi_i)+\V(\xi_i)\Big\}\le C(\e,u_0)\,,
\]
so that \eqref{l2nprime conv} implies
\begin{equation}
  \label{con lp}
  \lim_{j\to\infty}\|u(t_j)-v_j\|_{L^p(\R^n)}=0\,,\qquad\forall p>2\,.
\end{equation}
We finally recall that given $k,k_1,k_2\in\N$ and $p,p_1,p_2\in[1,\infty]$ related by $k=\theta\,k_1+(1-\theta)\,k_2$ and $(1/p)=(\theta/p_1)+(1-\theta)/p_2$ for some $\theta\in(0,1)$, then we have
\[
\|f\|_{W^{k,p}(\R^n)}\le C\,\|f\|_{W^{k_1,p_1}(\R^n)}^\theta\,\|f\|_{W^{k_2,p_2}(\R^n)}^{1-\theta}
\]
for every $f\in W^{k_1,p_1}(\R^n)\cap W^{k_2,p_2}(\R^n)$ and with a constant $C$ depending only on $n$, $p_1$, $p_2$, $k_1$ and $k_2$. By combining this interpolation inequality (with $k=1,2$, $k_1=0$, $k_2=3$, any $p_1>2$ and $p_2\ge2$) with \eqref{con lp} and the uniform $W^{3,p}$-bounds satisfied by $u(t_j)$ (recall \eqref{reg2}) and each of the $\xi_i$'s, we conclude the proof of \eqref{thm21 utk limit to xii p2}.

\medskip

\noindent {\it Step five}: To complete the proof of \eqref{thm21 utk limit to xii} we need to prove the $W^{1,2}$-convergence stated in \eqref{thm21 utk limit to xii p2}, that is
\begin{equation}
  \label{that is w12}
  \lim_{j\to\infty}\Big\|u(t_j)-\sum_{i=1}^M\tau_{x_j^i}[\xi_i]\Big\|_{W^{1,2}(\R^n)}=0\,.
\end{equation}
To this end we set, for the sake of brevity,
\[
u_j=u(t_j)\,,\qquad v_j=\sum_{i=1}^M\tau_{x_j^i}[\xi_i]\,,\qquad w_j=u_j-v_j=u(t_j)-\sum_{i=1}^M\tau_{x_j^i}[\xi_i]\,,
\]
as well as
\[
Z(r)=W'(r)-\e\,\ell_\e\,V'(r)\,,\qquad\forall r\in[0,1]\,.
\]
If we multiply by $w_j$ in $\e^2(\pa_tu_j-2\,\Delta u_j)=-Z(u_j)+\e\,(\ell_\e-\l_\e[u_j])\,V'(u_j)$ and in $2\,\e^2\,\Delta v_j=\sum_{i=1}^MZ(\tau_{x_j^i}[\xi_i])$, and we then add the resulting identities, we obtain
\begin{eqnarray}\label{PDE wj}
  -2\,\e^2\,w_j\,\Delta w_j&=&-\e^2\,w_j\,\pa_tu_j+\e\,(\ell_\e-\l_\e[u_j])\,w_j\,V'(u_j)
  \\\nonumber
  &&+w_j\Big\{-Z(u_j)+\sum_{i=1}^MZ(\tau_{x_j^i}[\xi_i])\Big\}
\end{eqnarray}
By \eqref{eq l2 bound}, $\V(u_j)=1$, $\AC(u_j)\le\AC(u_0)$, $\V(\xi_i)\le 1$, and $\AC(\xi_i)\le\AC(u_0)$ we have $\|w_j\|_{L^2(\R^n)}\le C(\e,u_0)$, so that $|w_j|\le M+1$ on $\R^n$ and \eqref{con lp} imply
\begin{equation}
  \label{wj in Lp}
  \|w_j\|_{L^2(\R^n)}\le C(\e,u_0,M)\,,\qquad \lim_{j\to\infty}\|w_j\|_{L^p(\R^n)}=0\,,\qquad\forall p>2\,.
\end{equation}
Combining \eqref{wj in Lp} and $0\le V'(u_j)\le C\,u_j^{2n'-1}$ with \eqref{PDE wj} and the H\"older inequality, we thus find
\begin{eqnarray}
  \nonumber
  \int_{\R^n}|\nabla w_j|^2&\le&C(\e,u_0,M)\,\|\pa_tu_j\|_{L^2(\R^n)}+C(\e,u_0)\,\big|\ell_\e-\l_\e[u_j]\big|\,\|w_j\|_{L^{2n'}(\R^n)}
  \\\label{w12 proof 1}
  &&+\int_{\R^n}w_j\,\Big\{-Z(u_j)+\sum_{i=1}^MZ(\tau_{x_j^i}[\xi_i])\Big\}\,.
\end{eqnarray}
where we have also used $\|u(t)\|_{L^{2n'}(\R^n)}\le C(\e,u_0)$ for all $t>0$. The first two terms on the right-hand side of \eqref{w12 proof 1} converge to zero as $j\to\infty$, respectively, thanks to Theorem \ref{thm existence and regularity}-(vi) and to \eqref{thm21 leps utk to leps} and \eqref{wj in Lp}. To deal with the third term on the right hand side of \eqref{w12 proof 1}, we consider $\s>0$ and pick $R>0$ as in \eqref{choice of R}, so that, as proved in \eqref{fine1} and \eqref{fine2}, we have
\begin{equation}
  \label{recall}
  \int_{A_j^R}\sum_{i=1}^M\tau_{x_j^i}[\xi_i]^{2n'}\le C(M)\,\s\,,\qquad
  \limsup_{j\to\infty}\int_{A_j^R}u_j^{2n'}\le C(M)\,\s\,,
\end{equation}
where $A_j^R=\R^n\setminus\cup_{i=1}^MB_R(x_j^i)$. Moreover, up to further increase the value of $R$ we can ensure that
\begin{equation}
  \label{xi in l2 out BR}
  \max_{i=1,...,M}\|\xi_i\|_{L^p(\R^n)}\le C(p)\,\s\,,\qquad\forall p\ge 2\,,
\end{equation}
as well as
\begin{equation}
  \label{altra}
  \mbox{$|\xi_i|\le \k(\e)$ on $\R^n\setminus B_R$, where $\k(\e)>0$ is s.t. $Z'(r)\ge \k(\e)$ for all $r\in[0,\k]$}\,.
\end{equation}
(Notice that the existence of $\k(\e)$ is guaranteed by $W''(0)>0$, $V''(0)=0$, and $|\ell_\e|\le C(u_0)$, which in turn follows from $\sup_j|\l_\e[u_j]|\le C(u_0)$ (recall Theorem \ref{thm existence and regularity}-(ii)) and \eqref{thm21 leps utk to leps}.

\medskip

Now, provided $j$ is large enough depending on $R$ (thus on $\sigma$), $\{A_j^R,B_R(x_i^j)\}_{i=1}^M$ is a partition of $\R^n$, that we can use to decompose the third term on the right hand side of \eqref{w12 proof 1}. We begin by working with the integration over $A_j^R$, that we decompose by assigning a privileged role to $\xi_1$, and writing
\begin{eqnarray}\nonumber
&&\int_{A_j^R}w_j\,\Big\{-Z(u_j)+\sum_{i=1}^MZ(\tau_{x_j^i}[\xi_i])\Big\}
=\int_{A_j^R}\big(u_j-\tau_{x_j^1}[\xi_1]\big)\,\big(Z(\tau_{x_j^1}[\xi_1])-Z(u_j)\big)
\\\nonumber
&&-\int_{A_j^R}\sum_{\ell=2}^M\,\tau_{x_j^\ell}[\xi_\ell]\,\Big\{-Z(u_j)+\sum_{i=1}^MZ(\tau_{x_j^i}[\xi_i])\Big\}
-\int_{A_j^R}\big(u_j-\tau_{x_j^1}[\xi_1]\big)\sum_{\ell=2}^M\,Z(\tau_{x_j^\ell}[\xi_\ell])
\\\label{di3}
&&\le\int_{A_j^R}\big(u_j-\tau_{x_j^1}[\xi_1]\big)\,\big(Z(\tau_{x_j^1}[\xi_1])-Z(u_j)\big)
\\\nonumber
&&+C(M)\,\max_{1\le i\le M}\|\xi_i\|_{L^2(\R^n\setminus B_R)}\,\max_{1\le i\le M}\big\{\|u_j\|_{L^2(\R^n)},\|\xi_i\|_{L^2(\R^n)}\big\}\,,
\end{eqnarray}
where we have used $|Z(r)|\le C\,r$ for all $r\in[0,1]$. Now by the properties of $W$ and $V$ we have
\begin{equation}
  \label{holder Z}
  Z(r)-Z(s)=Z'(s)\,(r-s)+{\rm O}(r-s)^{2n'-1}\,,\qquad\forall r,s\in[0,1]\,,
\end{equation}
so that by plugging $s=\xi_1$ and $r=u_j$ in \eqref{holder Z}, and by recalling that, thanks to \eqref{altra}, $Z'(\tau_{x_j^1}[\xi_1])\ge\k(\e)$ on $A_j^R$,
\begin{eqnarray}\label{di1}
&&\int_{A_j^R}\big(u_j-\tau_{x_j^1}[\xi_1]\big)\,\big(Z(\tau_{x_j^1}[\xi_1])-Z(u_j)\big)
\\\nonumber
&&\le-\int_{A_j^R}\big(u_j-\tau_{x_j^1}[\xi_1]\big)^2\,Z'(\tau_{x_j^1}[\xi_1])+
C\,\int_{A_j^R}|u_j-\tau_{x_j^1}[\xi_1]|^{2n'}
\\\nonumber
&&\le-\k(\e)\,\int_{A_j^R}\big(u_j-\tau_{x_j^1}[\xi_1]\big)^2+C\,\int_{A_j^R}|u_j-\tau_{x_j^1}[\xi_1]|^{2n'}\,.
\end{eqnarray}
Now, for every $p\ge 2$,
\begin{eqnarray}\label{di2}
\Big|\int_{A_j^R}|u_j-\tau_{x_j^1}[\xi_1]|^p-\int_{A_j^R}|w_j|^p\Big|
\le C(p)\,\sum_{i=2}^M\int_{A_j^R}\tau_{x_j^i}[\xi_i]^p\le C(p)\,\sum_{i=2}^M\|\xi_i\|_{L^p(\R^n\setminus B_R)}\,,
\end{eqnarray}
so that, combining \eqref{di1} and \eqref{di2} with \eqref{di3}, we finally conclude that
\begin{eqnarray*}
\int_{A_j^R}w_j\,\Big\{-Z(u_j)+\sum_{i=1}^MZ(\tau_{x_j^i}[\xi_i])\Big\}
\le   -\k(\e)\,\int_{A_j^R}w_j^2+ C(p,M)\,\s\,.
\end{eqnarray*}
Going back to \eqref{w12 proof 1} we have proved that
\begin{eqnarray}
  \label{w12 proof 2}
  \int_{\R^n}|\nabla w_j|^2+\k(\e)\,\int_{A_j^R}w_j^2\!\!&\le&\!\!
  \sum_{i=1}^M\int_{B_R(x_j^i)}w_j\,\Big\{-Z(u_j)+\sum_{i=1}^MZ(\tau_{x_j^i}[\xi_i])\Big\}
  \\\nonumber
  &&+\,{\rm o}_j+C(M)\,\s
\end{eqnarray}
where ${\rm o}_j\to 0$ as $j\to\infty$. Next we observe that
\begin{eqnarray*}
&&\int_{B_R(x_j^1)}w_j\,\Big\{-Z(u_j)+\sum_{i=1}^MZ(\tau_{x_j^i}[\xi_i])\Big\}
\\
&&\le C\, \int_{B_R(x_j^1)}\Big\{|u_j-\tau_{x_j^1}[\xi_1]|+\sum_{i=2}^M\tau_{x_j^1}[\xi_1]\Big\}\,\Big\{u_j+\sum_{k=1}^M|\tau_{x_j^k}[\xi_k]\Big\}
\\
&&\le C\, \max_{1\le i\le M}\big\{\|u_j\|_{L^2},\|\xi_i\|_{L^2}^2\big\}\Big\{ \int_{B_R(x_j^1)}|u_j-\tau_{x_j^1}[\xi_1]|^2+\sum_{k=2}^M\int_{B_R(x_j^1)}\tau_{x_j^k}[\xi_k]^2\Big\}
\\
&&\le C(\e,u_0,M)\,\Big\{\int_{B_R}|\tau_{-x_j^1}[u_j]-\xi_1|^2+\sum_{k=2}^M\int_{B_R(x_j^1-x_j^k)}\xi_k^2 \Big\}\,,
\end{eqnarray*}
where the last two integrals converge to zero as $j\to\infty$ thanks to \eqref{conv conv}, $\xi_k\in L^2$ and $|x_j^1-x_j^k|\to\infty$ as $j\to\infty$ if $k\ne 1$. Of course we can repeat this same argument with any other $B_R(x_j^i)$ with $i=2,...,M$, and go back to \eqref{w12 proof 2} to conclude that
\begin{eqnarray}
  \label{w12 proof 3}
  \int_{\R^n}|\nabla w_j|^2+\k(\e)\,\int_{A_j^R}w_j^2\le \,{\rm o}_j+C(M)\,\s\,.
\end{eqnarray}
We finally conclude the proof of \eqref{that is w12} since, for each $i=1,...,M$, we have
\begin{eqnarray*}
\int_{B_R(x_j^i)}w_j^2&\le&2\,\int_{B_R(x_j^i)}|u_j-\tau_{x_j^i}[\xi_i]|^2+2\,\sum_{k\ne i}\int_{B_R(x_j^i)}\tau_{x_j^k}[\xi_k]^2
\\
&=&2\,\|\tau_{-x_j^i}[u_j]-\xi_i\|_{L^2(B_R)}^2+2\,\sum_{k\ne i}\|\xi_k\|_{L^2(B_R(x_j^i-x_j^k))}^2\,,
\end{eqnarray*}
where the right hand side converges to zero as $j\to\infty$ by the same arguments used in the proof of \eqref{w12 proof 3}. This concludes the proof of \eqref{that is w12}.

\medskip

\noindent {\it Step six}: We finally prove that if $\spt\,u_0\cc\R^n$, then $M=1$. We use the moving plane method, following an argument in \cite[Theorem 1.2]{feireisl1997long}. Let $s_0>0$ be such that $\spt\,u_0\subset B_{s_0}(0)$. For $s\ge s_0$, let $H_s=\{x\in\R^{n+1}:x_1>s\}$, and define
\[
\rho_s[v](x)=v\big(2s-x_1,x_2,...,x_{n+1}\big)\,,\qquad\forall x\in\R^{n+1}\,,v:\R^{n+1}\to\R\,.
\]
In this way, $u_s(t)=\rho_s[u(t)]$ solves
\[
\e^2\,\big(\pa_t u_s-2\,\Delta u_s\big)=-W'(u_s)+\e\,\l_\e[u_s(t)]\,V'(u_s)\,,\qquad\mbox{on $\R^n\times(0,\infty)$}\,,
\]
with initial datum $u_s(0)=\rho_s[u(0)]=\rho_s[u_0]$. Since, trivially, $\l_\e[u_s(t)]=\l_\e[u(t)]$ for all $t>0$, we have that $v_s=u_s-u$ solves
\[
\e^2\,\big(\pa_t v_s-2\,\Delta v_s\big)=h(x,t)\,,\qquad\mbox{on $\R^n\times(0,\infty)$}\,,
\]
with $v_s(0)=\rho_s[u_0]-u_0$ and
\[
h(x,t)=W'(u)-W'(u_s)+\e\,\l_\e[u(t)]\,(V'(u_s)-V'(u))=c(x,t)\,v_s
\]
for some $c\in L^\infty(\R^n\times(0,\infty))$. Since $\spt\,u_0\subset B_{s_0}(0)$ and $s>s_0$ imply that $v_s(0)\ge0$ on $H_s$, by the parabolic maximum principle \cite[Lemma 2.3]{lieberman} we deduce that $v_s(t)\ge0$ on $H_s$ for all $t>0$. In particular, if $h>s$ and $(x_2,...,x_{n+1})\in\R^n$, then the non-negativity of $v_s(t)$ at $(h,x_2,...,x_{n+1})\in H_s$ implies that
\begin{eqnarray*}
  0\!\!&\le&\!\!\lim_{h\to s^+}\frac{v_s(h,x_2,...,x_{n+1},t)}{h-s}
  \\
  &=&\!\!
\lim_{h\to s^+}\frac{u(2s-h,x_2,...,x_{n+1},t)-u(h,x_2,...,x_{n+1},t)}{h-s}=-e_1\cdot\nabla u(s,x_2,...,x_{n+1},t)\,.
\end{eqnarray*}
We have thus proved that $e_1\cdot\nabla u(t)\le 0$ on $H_s$ for every $s>s_0$. By arbitrariness of $s>s_0$ and of the choice of the direction with respect to which we reflect $u(t)$, we conclude that
\begin{equation}
  \label{also seen}
\frac{x}{|x|}\cdot \nabla u(x,t)\le 0\,,\qquad\forall x\in\R^{n+1}\setminus B_{s_0}(0)\,,t>0\,.
\end{equation}
By \eqref{reg2}, for every $t>0$, $u(x,t)\to 0$ as $|x|\to\infty$. Now pick a sequence $t_j\to\infty$ such that \eqref{thm21 utk limit to xii p2} holds. Since $m=\max_{1\le i\le M}\sup_{\R^n}\xi_i>0$, if $M\ge 2$, then  the fact that for $i\ne k$ we have $|x_j^i-x_j^k|\to\infty$ as $j\to\infty$, combined with \eqref{thm21 utk limit to xii p2}, implies that, for every $R>0$, there is $j(R)\in\N$ such that if $j\ge j(R)$ then
\[
\sup_{\R^{n+1}\setminus B_R}u(t_j)\ge\frac{m}2\,.
\]
This leads to a contradiction with $u(x,t_j)\to 0$ as $|x|\to\infty$. Therefore $M=1$ and, by a similar argument, \eqref{also seen} is also seen to imply the boundedness of $(x_j^1)_j$, and thus its convergence modulo a further subsequence extraction.
\end{proof}

\section{Subsequential bubbling into diffused balls (Proof of Theorem \ref{theorem bubbling one})}\label{section theorem bubbling one}
\begin{proof}
[Proof of Theorem \ref{theorem bubbling one}] We start by noticing that, thanks to \eqref{isop lb}, we have
\[
N\,\Psi\Big(\e,\frac1N\Big)\ge 2\,n\,\om_n^{1/n}\,N^{1/n}\,,\qquad\forall N>0\,.
\]
In particular, if $M_0$ is the least integer such that $n\,\om_n^{1/n}\,M_0^{1/n}>\int_{\R^n}|\nabla u_0|$, then by
$\AC(u_0)\to 2\,\int_{\R^n}|\nabla u_0|$ we can find $\e_*>0$ (depending on $u_0$) such that
\begin{equation}
\label{ACu0 less than M0 balls proof}
M_0\,\Psi\Big(\e, \frac{1}{M_0}\Big)\ge \AC(u_0)\,,\qquad\forall \e\in(0,\e_*)\,.
\end{equation}
We shall prove the theorem for every solution $u(t)$ of \eqref{diffused VPMCF} corresponding to $\e<\e_0^*$, where
\begin{equation}
  \label{e0star}
\e_0^*:=\min\Big\{\e_*,\frac{\s_0}{M_0^{1/n}}\Big\}\,,
\end{equation}
for some suitably small universal constant $\s_0$ to be the determined below.

\medskip

\noindent {\it Step one}: We prove the existence of a universal constant $\s_0$ such that
\[
\Psi(\s,1)\ge\Psi(\e,1)\,,\qquad\forall \e<\min\{\s_0,\s\}\,.
\]
We shall take $\s_0\le\e_0$, so that, thanks to \eqref{Psi strict increasing in eps}, we will be able to focus directly on the case when $\e<\s_0<\s$. Should the the claim fail in this case, we could then find sequences $(\e_j)_j$ and $(\s_j)_j$ with $\e_j\to 0^+$ as $j\to\infty$ and
\begin{equation}
  \label{sj ej}
\s_j>\e_j\,,\qquad \Psi(\s_j,1)<\Psi(\e_j,1)\,,\qquad\forall j\in\N\,.
\end{equation}
For each $j$ we denote by  $u_j$ a minimizer of $\Psi(\s_j,1)$, and notice that, by \eqref{Psi limit as eps to zero} and up to extracting a subsequence in $j$, there is $\s_*\in[0,\infty]$ such that
\begin{equation}
  \label{psi sj 1}
\lim_{j\to\infty}\s_j=\s_*\in[0,\infty]\,,\qquad \lim_{j\to\infty}\Psi(\s_j,1)=\lim_{j\to\infty}\ac_{\s_j}(u_j)=2\,c_{\rm iso}(n)\,.
\end{equation}
We can immediately exclude that $\s_*=0$ thanks to \eqref{Psi strict increasing in eps} and \eqref{sj ej}. If $\s_*\in(0,\infty)$, then a minor modification of the elementary compactness argument of \cite[Proof of Theorem A.1, steps one and two]{maggi2023hierarchy} shows that, up to extracting subsequences, there is a minimizer $u_*$ in $\Psi(\s_*,1)$ such that $u_j\to u_*$ in $L^1_{\rm loc}(\R^n)$. In particular, by combining \eqref{Psi isop lower bound} with \eqref{psi sj 1} we find that
\[
2\,c_{\rm iso}(n)<\Psi(\s_*,1)=\ac_{\s_*}(u_*)\le\liminf_{j\to\infty}\ac_{\s_j}(u_j)=\lim_{j\to\infty}\Psi(\s_j,1)=2\,c_{\rm iso}(n)\,,
\]
a contradiction. We are thus left with the possibility that $\s_*=+\infty$. To obtain a contradiction, we notice that, by $1=\V(u_j)=\|\Phi(u_j)\|_{L^{n/(n-1)}(\R^n)}$, we have
\[
c_{\rm iso}(n)\le|D[\Phi(u_j)]|(\R^n)<\frac{\ac_{\s_j}(u_j)}2\to c_{\rm iso}(n)\,,\qquad\mbox{as $j\to\infty$}.
\]
In particular, by a standard compactness argument (see, e.g., \cite[Theorem A.1]{fmpraBV})), for some $a,r>0$ and up to extracting a subsequence, $\Phi(u_j)\to a\,1_{B_r}$ in $L^{n/(n-1)}(\R^n)$ as $j\to\infty$. Setting $b=\Phi^{-1}(a)$ we have $u_j\to b\,1_{B_r}$ in $L^1_{\rm loc}(\R^n)$, so that, for every $\vphi\in C^\infty_c(\R^n)$,
\[
\int_{\R^n}\,b\,1_{B_r}\,\nabla\vphi=\lim_{j\to\infty}\int_{\R^n}\,u_j\,\nabla\vphi\le\|\vphi\|_{L^2(\R^n)}\,\|\nabla u_j\|_{L^2(\R^n)}
\]
where $\|\nabla u_j\|_{L^2(\R^n)}\le \ac_{\s_j}(u_j)/\s_j\to 0$ as $j\to\infty$ thanks to $\s_j\to+\infty$. In summary, $|D(b\,1_{B_r})|(\R^n)=0$, so that $0=b=\Phi^{-1}(a)$, and thus $a=\Phi(0)=0$, against $a>0$.

\medskip

\noindent {\it Step two}: Given a sequence $t_j\to\infty$ as $j\to\infty$, we want to show that, up to extracting a subsequence, there is $M \in \N$ (with $M\leq M_0$) and sequences $\{(x_j^i)_j\}_{i=1}^M$ with $|x_j^i-x_j^k|\to\infty$ as $j\to\infty$ ($i\ne k$), such that, for all $p>2$,
\begin{equation}
\label{convergence to bubbles M0 proof}
\lim_{j\to\infty}\Big\| u(t_j)-  \sum_{i=1}^M \tau_{x_j^i}\big[\zeta_{\e,1/M}\big]\Big\|_{(W^{2,p}\cap W^{1,2})(\R^n)}=0\,.
\end{equation}
By Theorem \ref{thm bubbling general}, we know that \eqref{convergence to bubbles M0 proof} holds if in place of $\zeta_{\e,1/M}$ we have some $\xi_i$, $i=1,...,M$, solving, for a same $\ell_\e>0$,
\begin{equation}
  \label{bella xi}
2\,\e^2\,\Delta \xi_i = W'(\xi_i)-\e\,\ell_\e\, V'(\xi_i)\quad\mbox{on $\R^n$},
\end{equation}
with $\AC(u_0)\ge\sum_{i=1}^M\AC(\xi_i)$ and $1=\sum_{i=1}^M\V(\xi_i)$. We set for brevity $m_i=\V(\xi_i)$ and assume without loss of generality that $m_i\ge m_{i+1}$. Our goal is thus proving the existence of $z_i\in\R^n$ such that $\xi_i=\tau_{z_i}[\zeta_{\e,1/M}]$ for each $i$.

\medskip

We first notice that it must be $m_1\ge 1/M_0$. Indeed, should this not be the case, then using also step one and the fact that $\e<\e_0^*$ implies $\e\,M_0^{1/n}<\s_0$, we would have
\[
\frac1{m_i^{1/n}}> M_0^{1/n}\,,\qquad \Psi\Big(\e\big/m_i^{1/n},1\Big)\ge\Psi\Big(\e\,M_0^{1/n},1\Big)\,,\qquad\forall i\,.
\]
We could then combine these inequalities with $\AC(u_0)\ge\sum_{i=1}^M\AC(\xi_i)$ and \eqref{Psi scaling} to find that
\begin{eqnarray*}
\AC(u_0)&\ge&\sum_{i=1}^M\,\Psi(\e,m_i)=\sum_{i=1}^Mm_i^{(n-1)/n}\,\Psi\Big(\e\big/m_i^{1/n},1\Big)
\\
&>&M_0^{1/n}\,\sum_{i=1}^Mm_i\,\Psi\Big(\e\big/m_i^{1/n},1\Big)\ge M_0^{1/n}\,\Psi(\e\,M_0^{1/n},1)\,\sum_{i=1}^Mm_i
\\
&=&M_0^{1/n}\,\Psi(\e\,M_0^{1/n},1)=M_0^{1/n}\,M_0^{(n-1)/n}\,\Psi(\e,1/M_0)=M_0\,\Psi(\e,1/M_0)\,,
\end{eqnarray*}
thus leading to a contradiction with \eqref{ACu0 less than M0 balls proof}.

\medskip

We can thus assume that $m_1\ge 1/M_0$. By arguing as in step one in the proof of Theorem \ref{thm bubbling general} (see, in particular, the proof of \eqref{eq formulalagrange}), we see that \eqref{bella xi} implies
\[
n\,\ell_\e\,m_i=n\,\ell_\e\,\V(\xi_i) =n\, \AC(\xi_i)-2\,\e\,\int_{\R^n} |\nabla \xi_i|^2\,.
\]
By \eqref{ACu0 less than M0 balls proof}, we find
\[
\e\,\ell_\e\le\e\,\sum_{i=1}^M\,\AC(\xi_i)\le\e\,\AC(u_0)\le \e\,M_0^{1/n}\,\Psi(\e\,M_0^{1/n},1)\,.
\]
By $\e\,M_0^{1/n}<\s_0$ and by step one we also have $\Psi(\e\,M_0^{1/n},1)\le\Psi(\s_0,1)$, so that
\[
\e\,\ell_\e\le \s_0\,\Psi(\s_0,1)\,.
\]
Since $\s\,\Psi(\s,1)\to 0$ as $\s\to 0^+$, we conclude that, up to further decreasing the value of $\s_0$, we can guarantee $\e\,\ell_\e<\nu_0$ with $\nu_0$ as in Theorem $\Psi$-(ii). In particular, for each $i$ there exists $z_i\in\R^n$ such that
\[
\xi_i=\tau_{z_i}[\zeta_{\e,m_i}]\,,\qquad \ell_\e=\Lambda_{\e,m_i}\,.
\]
Since $\ell_\e$ is independent of $i$, $\ell_\e=\Lambda_{\e,m_i}$ implies that $m_i=m_1$ for all $i$. Therefore $1=\sum_{i=1}^Mm_i$ gives $m_1=1/M$, and the proof that $\xi_i=\tau_{z_i}[\zeta_{\e,1/M}]$ for each $i$ is complete.

\medskip

\noindent {\it Conclusion}: We are left to prove that $M$ is uniquely determined by the relation
\begin{equation}
  \label{unch}
M\,\Psi(\e,1/M)=\lim_{t\to\infty}\AC(u(t))\,.
\end{equation}
Since $M\le M_0$ and $\e\,M_0^{1/n}<\s_0\le\e_0$ we know that $M\in(0,(\e_0/\e)^n)$. To conclude that \eqref{unch} uniquely characterizes $M$ we are going to prove that
\[
x\mapsto x\,\Psi\Big(\e,\frac1x\Big)\,\,\mbox{is strictly increasing on}\,\, \Big(0,\Big(\frac{\e_0}{\e}\Big)^n\Big)\,,
\]
which, in turn, is equivalent to showing that $x\mapsto \Psi(\e,x)/x$ is strictly decreasing on $((\e/\e_0)^n,\infty)$.

\medskip

Let us set $f(x)=\Psi(\e,x)$ so that, by \eqref{Psi strict concave in m}, $f$ is concave on $(0,\infty)$ and strictly concave on $(a,\infty)$ (with $a=(\e/\e_0)^n$). Since $f(0^+)\ge0$ we deduce from $f(0^+)\le f(x)+f'(x)(0-x)$ for all $x>0$ that $x\,f'(x)\le f(x)$ for all $x>0$. In particular,
\begin{equation}
  \label{gio}
\frac{f(a)}a\ge f'(a)>f'(x)\qquad\forall x>a\,.
\end{equation}
If we now use first $f(a)<f(x)+f'(x)(a-x)$ for all $x>a$, and then \eqref{gio}, we find that
\[
x\,f'(x)-f(x)<a\,f'(x)-f(a)<0\,,\qquad\forall x>a\,.
\]
This shows that $x\mapsto f(x)/x$ is strictly decreasing on $(a,\infty)$, as claimed.
\end{proof}

\section{Strict stability of the diffused isoperimetric problem (Proof of Theorem \ref{theorem eps stability})}\label{section stability second variation}

\begin{proof}[Proof of Theorem \ref{theorem eps stability}] We aim at proving the theorem by combining two results. The first one is the radial case of Theorem \ref{theorem eps stability}, which was proved in \cite[Lemma 4.4]{maggi-restrepo}: if $\e\in(0,\e_0)$, then
\begin{equation}\label{eq stability radial}
\QQ_\e[\zeta_\e](\vphi)\geq \frac{1}{C}\int_{\R^n} \e\, |\nabla \vphi|^2+\frac{\vphi^2}\e\,,
\end{equation}
for every $\vphi\in W^{1,2}(\R^n)$ which is radial with respect to the origin and such that $\int_{\R^n} V'(\zeta_\e)\,\vphi=0$. The second one is the strict stability of the second variation of the (volume-constrained) area functional on the unit sphere. By the latter, we mean the following stability result: Let $\{A_i\}_{i\in\N}$ denote the normalized eigenfunctions of the Laplacian on $\SS^{n-1}$ and let $\{\mu_i\}_{i\in\N}$ denote the corresponding eigenvalues listed in increasing order, so that $-\Delta^{\SS^{n-1}}A_i = \mu_i\,A_i$ on $\SS^{n-1}$ for each $i$, $A_0$ is constant with $\mu_0=0$, $A_i(\theta)=\theta_i$ with $\mu_i=(n-1)$ for $i=1,...,n$,  $\mu_{n+1}>\mu_n$ and $\int_{\SS^{n-1}}A_i^2=1$. It is well known (see, e.g. \cite[Lemma 4.2]{CicaleseLeonardi}) that, for every $\a\in W^{1,2}(\SS^{n-1})$,
\begin{equation}
	\label{nonneg sphere}
	\int_{\SS^{n-1}}|\nabla^{\SS^{n-1}} \a|^2-(n-1)\,\a^2\ge 0\,,\qquad\mbox{if}\,\,\int_{\SS^{n-1}}\a=0\,,
\end{equation}
(stability of the sphere with respect to volume-preserving variations) and that
\begin{equation}
	\label{stability sphere}
	\int_{\SS^{n-1}}|\nabla^{\SS^{n-1}} \a|^2-(n-1)\,\a^2\ge c(n)\,\int_{\SS^{n-1}}|\nabla^{\SS^{n-1}}\a|^2+\a^2\,,
\end{equation}
if
\begin{equation}
	\label{stability sphere ortho cond}
	\int_{\SS^{n-1}}\a=0\,,\qquad \int_{\SS^{n-1}}\theta\,\a(\theta)\,=0\,,
\end{equation}
for some positive constant $c(n)$ (strict stability of the sphere with respect to volume-preserving variations orthogonal to translations). Combining these two stability results will require some a careful decompositions of functions $\vphi$ satisfying \eqref{stability scnd var ortho conditions}.

\medskip

\noindent {\it Step one}: In this first step we introduce a convenient way to rewrite
\begin{equation}
  \label{form}
\QQ_\e[\zeta_\e](\vphi)=\int_{\R^n}2\,\e\,|\nabla\vphi|^2+\Big\{\frac{W''(\zeta_\e)}\e-\Lambda_\e\,V''(\zeta_\e)\Big\}\,\vphi^2\,,
\end{equation}
by means of the transformation $\psi=\vphi/\zeta_\e'$, which allows one to relate $\QQ_\e[\zeta_\e]$ to the second variation of the volume-constrained area functional on the sphere. (This kind of transformation is inspired by similar computations found in \cite{tonegawa2005stable,le2011second,gaspar2020second}.) More precisely, setting $\hat{x}=x/|x|$ and denoting by $\zeta_\e'$, $\zeta_\e''$, etc. the derivatives of the radial profile of $\zeta_\e$, we show that if $\vphi\in W^{1,2}(\R^n)$ and, correspondingly, we define $\psi\in  W^{1,2}_{\loc}(\R^n\setminus\{0\})$ by
\begin{equation}
	\label{tone transform radial}
	\vphi=\psi\,\zeta_\e'\,,
\end{equation}
then
\begin{equation}
	\label{eq identity inner and outer}
	\QQ_\e[\zeta_\e](\vphi)=2\,\e\,\int_{\R^n}(\zeta_\e'^2)\,\Big\{|\nabla\psi|^2-\frac{(n-1)}{|x|^2}\,\psi^2\Big\}\,.
\end{equation}
Indeed, under \eqref{tone transform radial}, we have $\nabla\varphi=\zeta_\e'\,\nabla\psi+ \zeta_\e''\,\psi\,\hat{x}$, and integrating by parts\footnote{This is easily justified since $\zeta_\e$ and its derivatives all decay exponentially at infinity, see \cite[Theorem 3.1]{maggi-restrepo}.}
the mixed term in
\begin{equation}
	\label{eq change of var}
	|\nabla \varphi|^2
	=
	(\zeta_\e')^2\, |\nabla \psi|^2
	+ 2 \,\zeta_\e'\,\zeta_\e''\,\psi\, (\hat{x}\cdot\nabla \psi)
	+ (\zeta_\e'')^2\,\psi^2\,,
\end{equation}
we find
\begin{eqnarray}\nonumber
	2\,\int_{\R^n}\zeta_\e'\,\zeta_\e''\,\psi\,(\hat{x}\cdot\nabla \psi)
	&=&-\int_{\R^n}\psi^2\,\Div\big(\zeta_\e'\,\zeta_\e''\hat{x}\big)
	\\
	\label{hola}
	&=&-\int_{\R^n}\psi^2\, (\zeta_\e'')^2-\int_{\R^n} \psi^2\, \zeta_\e'\,\Big\{\zeta_\e'''+\frac{n-1}{|x|}\,\zeta_\e''\Big\}\,.
\end{eqnarray}
To rewrite the term with $\zeta_\e'''$ we make use of $2\,\e^2\,\Delta\zeta_\e=W'(\zeta_\e)-\e\,\Lambda_\e\,V'(\zeta_\e)$ on $\R^n$, which, writing $\Delta \zeta_\e$ in radial coordinates, takes the form
\begin{equation}
	\label{eq ODE}
	2\,\e^2\,\Big\{\zeta_\e''+ \frac{(n-1)}{|x|}\,\zeta_\e'\Big\}= W'(\zeta_\e)-\e\,\Lambda_\e\, V'(\zeta_\e)\,.
\end{equation}
Differentiating \eqref{eq ODE} in the radial direction we thus find
\[
2\,\e^2\,\Big\{\zeta_\e'''+ \frac{n-1}{|x|}\,\zeta_\e''-\frac{n-1}{|x|^2}\,\zeta_\e'\Big\}
= \zeta_\e'\,\Big\{W''(\zeta_\e)-\e\,\Lambda_\e\, V''(\zeta_\e)\Big\}\,,
\]
which can be combined into \eqref{hola} to obtain
\begin{eqnarray}\nonumber
	2\,\int_{\R^n}\zeta_\e'\,\zeta_\e''\,\psi\,(\hat{x}\cdot\nabla \psi)
	\!\!&=&\!\!-\int_{\R^n}\psi^2\, (\zeta_\e'')^2-\int_{\R^n}\psi^2\, (\zeta_\e')^2\,
	\Big\{\frac{W''(\zeta_\e)-\e\,\Lambda_\e\, V''(\zeta_\e)}{2\,\e^2}+\frac{n-1}{|x|^2}\Big\}\,.
\end{eqnarray}
On combining this last identity with \eqref{eq change of var} and the definition of $\QQ_\e[\zeta_\e]$ we thus find
\begin{eqnarray*}
	\QQ_\e[\zeta_\e](\vphi)&=&
	2\,\e\,\int_{\R^n}|\nabla\vphi|^2+\int_{\R^n}\Big\{\frac{W''(\zeta_\e)}\e-\Lambda_\e\,V''(\zeta_\e)\Big\}\,\vphi^2
	\\
	&=&
	2\,\e\,\int_{\R^n}(\zeta_\e')^2\, |\nabla \psi|^2+ (\zeta_\e'')^2\,\psi^2
	\\
	&&-2\,\e\,\int_{\R^n}\psi^2\, (\zeta_\e'')^2
	-\int_{\R^n} \psi^2\, (\zeta_\e')^2\,\Big\{\frac{W''(\zeta_\e)}\e-\Lambda_\e\, V''(\zeta_\e)+2\,\e\,\frac{n-1}{|x|^2}\Big\}
	\\
	&&+\int_{\R^n}\Big\{\frac{W''(\zeta_\e)}\e-\Lambda_\e\,V''(\zeta_\e)\Big\}\,\vphi^2
\end{eqnarray*}
which boils down to \eqref{eq identity inner and outer} thanks to $\vphi=\zeta_\e'\,\psi$.

\medskip

\noindent {\it Step two}: We prove that conclusion \eqref{stability scnd var W12} follows from conclusion \eqref{stability scnd var vphi only}. Indeed, arguing by contradiction, should \eqref{stability scnd var ortho conditions} imply \eqref{stability scnd var vphi only} but not \eqref{stability scnd var W12}, then we could find a sequence $(\vphi_j)_j$ in $W^{1,2}(\R^n)$ such that
\begin{eqnarray}
\label{ortos}
&&\int_{\R^n}\vphi_j\,V'(\zeta_\e)=0\,,\qquad \int_{\R^n}\vphi_j\,\nabla\zeta_\e=0\,,\qquad\forall j\,,
\\
\label{p1}
&&\int_{\R^n}|\nabla\vphi_j|^2+\vphi_j^2=1\,,\qquad\forall j\,,
\\
\label{p2}
&&\lim_{j\to\infty}\QQ_\e[\zeta_\e](\vphi_j)=0\,.
\end{eqnarray}
Having assumed that \eqref{stability scnd var ortho conditions} implies \eqref{stability scnd var vphi only}, we could deduce from \eqref{ortos} that
\begin{equation}
  \label{tesi2}
  \QQ_\e[\zeta_\e](\vphi_j)\ge\frac\e{C}\,\int_{\R^n}\vphi_j^2\,,\qquad\forall j\,.
\end{equation}
By combining \eqref{tesi2} with \eqref{p2} we would then find
\begin{equation}
  \label{p3}
  \lim_{j\to\infty}\int_{\R^n}\vphi_j^2=0\,,\quad\mbox{and, hence, by \eqref{p1},}\,\, \lim_{j\to\infty}\int_{\R^n}|\nabla\vphi_j|^2=1\,.
\end{equation}
But then, taking into account that $(W''(\zeta_\e)/\e-\Lambda_\e\,V''(\zeta_\e))\in L^\infty(\R^n)$, and combining \eqref{form} and \eqref{p3}, we could conclude that $\QQ_\e[\zeta_\e](\vphi_j)\to 2\,\e$ as $j\to\infty$, in contradiction with \eqref{p2}.

\medskip

\noindent {\it Step three}: By step two, we are left to prove that \eqref{stability scnd var ortho conditions} implies \eqref{stability scnd var vphi only}. In this step, we present an additional reduction. Let $L^2_{\rm rad}(\R^n)$ denote the set of radial functions with respect to the origin belonging to $L^2(\R^n)$, and let
\begin{eqnarray*}
W^{1,2}_{\rm rad}(\R^n)\!\!&=&\!\!L^2_{\rm rad}(\R^n)\cap W^{1,2}(\R^n)\,,
\\
Z\!\!&=&\!\!L^2_{\rm rad}(\R^n)^\perp\cap W^{1,2}(\R^n)\,,
\\
Z^*\!\!&=&\!\!\Big\{\vphi\in Z:\int_{\R^n}\vphi\,\nabla\zeta_\e=0\Big\}\,.
\end{eqnarray*}
We prove that if
\begin{equation}
\label{stability scnd var vphi only zetastar}
\QQ_\e[\zeta_\e](\vphi)\geq \frac{\e^2}C\,\int_{\R^n}\varphi^2\,,\qquad\forall\vphi\in Z^*\,,
\end{equation}
then \eqref{stability scnd var ortho conditions} implies \eqref{stability scnd var vphi only} (and the theorem is proved).

\medskip

To begin with, denoting by $\vphi_{\rm rad}$ and $\vphi_{\rm rad}^\perp$ the $L^2$-projections of $\vphi\in W^{1,2}(\R^n)$ on, respectively, $W^{1,2}_{\rm rad}(\R^n)$ and $Z$, so that $\vphi=\vphi_{\rm rad}+\vphi_{\rm rad}^\perp$, we notice that
\begin{equation}
  \label{q deco}
  \QQ_\e[\zeta_\e](\vphi)=\QQ_\e[\zeta_\e](\vphi_{\rm rad})+\QQ_\e[\zeta_\e](\vphi_{\rm rad}^\perp)\,,\qquad\forall\vphi\in W^{1,2}(\R^n)\,.
\end{equation}
Indeed,
\begin{eqnarray*}
\QQ_\e[\zeta_\e]\big(\vphi_{\rm rad},\vphi_{\rm rad}^\perp\big)
&=&
\int_{\R^n}2\,\e\,\nabla\vphi_{\rm rad}\cdot \nabla\vphi_{\rm rad}^\perp
+\Big(\frac{W''(\zeta_\e)}\e-\Lambda_\e\,V''(\zeta_\e)\Big)\vphi_{\rm rad}\,\vphi_{\rm rad}^\perp
\\
&=&
\int_{\R^n}
\Big\{-2\,\e\,\Delta\vphi_{\rm rad}+\Big(\frac{W''(\zeta_\e)}\e-\Lambda_\e\,V''(\zeta_\e)\Big)\vphi_{\rm rad}\Big\}\,
\vphi_{\rm rad}^\perp=0\,,
\end{eqnarray*}
where in the last identity we have used that $\vphi_{\rm rad}^\perp\in Z$ and the fact that the function in the curly bracket is radial.

\medskip

Next, we observe that the orthogonality relations in \eqref{stability scnd var ortho conditions} can be equivalently reformulated as follows for any $\vphi\in W^{1,2}(\R^n)$:
\begin{eqnarray}\label{ortos 1}
  &&\int_{\R^n}V'(\zeta_\e)\,\vphi=0\qquad\mbox{if and only if}\qquad\int_{\R^n}V'(\zeta_\e)\,\vphi_{\rm rad}=0\,,
  \\\label{ortos 2}
  &&\int_{\R^n}\vphi\,\nabla\zeta_\e=0\qquad\mbox{if and only if}\qquad\int_{\R^n}\vphi_{\rm rad}^\perp\,\nabla\zeta_\e=0\,,
\end{eqnarray}
since $V'(\zeta_\e)\in L^2_{\rm rad}(\R^n)$, $\nabla\zeta_\e=\zeta_\e'\,\hat{x}$, and $\int_{\R^n}\hat x\,\psi=0$ for all $\psi\in L^2_{\rm rad}(\R^n)$. In summary, by combining in the order \eqref{q deco}, \eqref{eq stability radial} applied to $\vphi_{\rm rad}$ (as we can do thanks to \eqref{ortos 1}) and \eqref{stability scnd var vphi only zetastar} to $\vphi_{\rm rad}^\perp$ (as we can do since $\vphi_{\rm rad}^\perp\in Z^*$ by \eqref{orto 2}) we deduce that
\[
\QQ_\e[\zeta_\e](\vphi)=\QQ_\e[\zeta_\e](\vphi_{\rm rad})+\QQ_\e[\zeta_\e](\vphi_{\rm rad}^\perp)
\ge\frac1{C\,\e}\,\int_{\R^n}(\vphi_{\rm rad})^2+\frac{\e}C\,\int_{\R^n}(\vphi_{\rm rad}^\perp)^2\ge \frac{\e}C\,\int_{\R^n}\vphi^2\,,
\]
that is \eqref{stability scnd var vphi only}.

\medskip

\noindent {\it Step four}: We now begin the proof of \eqref{stability scnd var vphi only zetastar} by introducing a Fourier decomposition of $\vphi\in W^{1,2}(\R^n)$ that is particularly convenient when $\vphi\in Z^*$. Since the radial Laplacian on $\R^n$, i.e. the map $B(r)\mapsto B''(r)+(n-1)\,B'(r)/r$ , defines a self-adjoint operator on $L^2((0,\infty),r^{n-1}\,dr)$, we can consider an orthonormal basis $\{B_j\}_{j=0}^\infty$ of $L^2((0,\infty),r^{n-1}\,dr)$ made up of its eigenfunctions. Since $L^2(\SS^{n-1})$ and $L^2((0,\infty),r^{n-1}\,dr)$ are separable Hilbert spaces, it follows that
$\{A_i\otimes B_j\}_{i,j=0}^\infty$ is an orthonormal basis of $L^2(K;\k)$, where $K=\SS^{n-1}\times(0,\infty)$ and $\k=(\H^{n-1}\llcorner\SS^{n-1})\times(r^{n-1}\,dr)$. In particular, since $\Phi:K\to\R^n$, $\Phi(\theta,r)=\theta\,r$, naturally induces an isometry between $L^2(\R^n)$ and $L^2(K,\k)$, we conclude that $\{(A_i\otimes B_j)\circ\Phi\}_{i,j=0}^\infty$ is an orthonormal basis of $L^2(\R^n)$. Moreover, each $a_{ij}=(A_i\otimes B_j)\circ\Phi$ is an eigenfunction of the Laplacian on $\R^n$, so that the orthogonality of $a_{ij}$ and $a_{hk}$ in $L^2(\R^n)$ (which holds true for $i\ne h$ or $j\ne k$) implies the orthogonality of $a_{ij}$ and $a_{hk}$ in $W^{1,2}_0(\R^n)$. In summary, whenever $\vphi\in W^{1,2}(\R^n)$ we have
\begin{eqnarray*}
&&\vphi=\sum_{i,j=0}^\infty\vphi_{ij}\,a_{ij}\qquad\mbox{in $W^{1,2}(\R^n)$}\,,
\\
&&\mbox{where}\,\,\vphi_{ij}=\int_{\R^n}a_{ij}\,\vphi=\int_{\R^n}A_i(\hat{x})\,B_j(|x|)\,\vphi(x)\,dx\,.
\end{eqnarray*}
If we define $\vphi_i^*\in W^{1,2}_{\rm rad}(\R^n)$ and $\vphi_i\in W^{1,2}(\R^n)$ by
\[
\vphi_i^*(x)=\sum_{j=0}^\infty\vphi_{ij}\,B_j(|x|)\,,\qquad \vphi_i(x)=A_i(\hat{x})\,\vphi_i^*(x)\,,\qquad x\in\R^n\,,
\]
then we have
\begin{equation}
\label{eq decomposition angular}
\vphi=\sum_{i=0}^\infty\vphi_i\qquad\mbox{in $W^{1,2}(\R^n)$}\,,
\end{equation}
as well as
\begin{equation}
\label{eq decomposition gradient}
\int_{\R^n} |\nabla \varphi|^2=\sum_{i=0}^\infty \int_{\R^n} |\nabla \varphi_i|^2\,,\qquad
\int_{\R^n} f\,\varphi^2 = \sum_{i=0}^\infty \int_{\R^n} f\,\varphi_i^2\,,
\end{equation}
for every $\vphi\in W^{1,2}(\R^n)$ and $f\in L^\infty(\R^n)$ that is either radial or angular (i.e., $f\circ\Phi$ depends either on $r$ or on $\theta$ only). We can also notice that
\begin{equation}
  \label{when in Z}
  \vphi_0=\vphi_0^*=0\qquad\forall\vphi\in Z\,,
\end{equation}
and then, since $A_0$ is a constant, if $\vphi\in Z$, then for every $j\in\N$
\[
\vphi_{0j}=\int_{\R^n}A_0(|\hat x|)\,B_j(|x|)\,\vphi(x)\,dx=A_0\,\int_{\R^n}B_j(|x|)\,\vphi(x)\,dx=0\,.
\]
In particular, by applying \eqref{eq decomposition gradient} with $f=(W''(\zeta_\e)/\e)-\Lambda_\e\,V''(\zeta_\e)$ and \eqref{when in Z} we find
\begin{equation}
\label{eq decomp quadratic form}
	\QQ_\e[\zeta_\e](\vphi) = \sum_{i=1}^\infty \QQ_\e[\zeta_\e](\vphi_i)\,,\qquad\forall\vphi\in Z\,.
\end{equation}
We make two claims:

\medskip

\noindent {\it Claim one}: if $\vphi\in Z^*$ and $i\ge n+1$, then
\begin{equation}
  \label{claim one}
  \QQ_\e[\zeta_\e](\vphi_i)\ge \frac{\e}{C}\,\int_{\R^n}\frac{\vphi_i^2}{|x|^2}\,.
\end{equation}

\medskip

\noindent {\it Claim two}: if $\vphi\in Z^*$ and $i=1,...,n$, then
\begin{equation}
  \label{claim two}
  \QQ_\e[\zeta_\e](\vphi_i)\ge \frac1{C\,\e}\,\int_{\R^n}\vphi_i^2\,.
\end{equation}

\medskip

We first show how to complete the proof of the theorem starting from these two claims, and then we prove the claims themselves.

\medskip

\noindent {\it Conclusion of the theorem from the claims}: Since $W''(0)>0$, $\zeta_\e\to 0$ as $|x|\to\infty$ (uniformly on $\e\in(0,\e_0)$), and $\e\,\Lambda_\e\to 0$ as $\e\to 0^+$, we see that there are universal constants $\k$ and $R_1$ such that
\begin{equation}
  \label{ffff}
  W''(\zeta_\e)-\e\,\Lambda_\e\,V''(\zeta_\e)\ge\k\,,\qquad\mbox{on $\R^n\setminus B_{R_1}$}\,,
\end{equation}
for every $\e\in(0,\e_0)$. By \eqref{claim one},
\begin{equation}
  \label{finefine one}
  \int_{B_{R_1}}\vphi_i^2\le \frac{C\,R_1^2}\e\,\QQ_\e[\zeta_\e](\vphi_i)\,,\qquad\forall i\ge n+1\,,
\end{equation}
so that, combining \eqref{when in Z}, \eqref{finefine one} and \eqref{claim two} we conclude that
\begin{equation}
  \label{finefine}
  \int_{B_{R_1}}\vphi^2\le \frac{C}\e\,\QQ_\e[\zeta_\e](\vphi)\,,\qquad\forall\vphi \in Z^*\,.
\end{equation}
Now let $L=\max_{B_{R_1}}|W''(\zeta_\e)-\e\,\Lambda_\e\,V''(\zeta_\e)|$. If
\[
\frac\k{2\,L}\,\int_{\R^n\setminus B_{R_1}}\vphi^2\le\int_{B_{R_1}}\vphi^2\,,
\]
then we deduce $\int_{\R^n}\vphi^2\le (C/\e)\,\QQ_\e[\zeta_\e](\vphi)$ by \eqref{finefine}; if, instead,
\begin{equation}
\label{finefine2}
\frac\k{2\,L}\,\int_{\R^n\setminus B_{R_1}}\vphi^2>\int_{B_{R_1}}\vphi^2\,,
\end{equation}
then by \eqref{ffff} and \eqref{finefine2} we see that
\begin{eqnarray*}
  \QQ_\e[\zeta_\e](\vphi)&\ge&\int_{\R^n}\Big\{\frac{W''(\zeta_\e)}\e-\Lambda_\e\,V''(\zeta_\e)\Big\}\,\vphi^2
  \ge\frac{\k}\e\,\int_{\R^n\setminus B_{R_1}}\vphi^2-\frac{L}\e\,\int_{B_{R_1}}\,\vphi^2
  \\
  &\ge&\frac\k{2\,\e}\,\int_{\R^n\setminus B_{R_1}}\vphi^2\,,
\end{eqnarray*}
which combined with \eqref{finefine} gives again $\int_{\R^n}\vphi^2\le (C/\e)\,\QQ_\e[\zeta_\e](\vphi)$, as desired. We are thus left to prove the two claims.

\medskip

\noindent {\it Proof of claim one}: Setting $\psi_i=\vphi_i/\zeta_\e'$ on $\R^n$, we see that $\psi_i(r\,\theta)=A_i(\theta)\,\xi_i(r)$ (where $\xi_i=\vphi_i^*/\zeta_\e'$), with
\[
|\nabla\psi_i|^2(r\,\theta)=|\nabla^{\SS^{n-1}}A_i(\theta)|^2\,\frac{\xi_i(r)^2}{r^2}+|A_i(\theta)|^2\,\xi_i'(r)^2\,.
\]
Since $i\ge n+1$ we can exploit \eqref{stability sphere} which, combined with \eqref{eq identity inner and outer}, gives
\begin{eqnarray*}
&&\frac{\QQ_\e[\zeta_\e](\vphi_i)}{2\,\e}=\int_{\R^n}(\zeta_\e')^2\,\Big\{|\nabla\psi_i|^2-\frac{(n-1)}{|x|^2}\psi_i^2\Big\}
\\
&\ge&\int_0^\infty\,\zeta_\e'(r)^2\,
\Big\{\Big(\frac{\xi_i(r)^2}{r^2}\,\int_{\SS^{n-1}}|\nabla^{\SS^{n-1}}A_i|^2-(n-1)\,A_i^2\Big)+\xi_i'(r)^2\int_{\SS^{n-1}}A_i^2\Big\}\,
r^{n-1}\,dr
\\
&\ge&c(n)\,
\int_0^\infty\,\zeta_\e'(r)^2\,
\Big\{\Big(\frac{\xi_i(r)^2}{r^2}\,\int_{\SS^{n-1}}|\nabla^{\SS^{n-1}}A_i|^2+\,A_i^2\Big)+\xi_i'(r)^2\int_{\SS^{n-1}}A_i^2\Big\}\,
r^{n-1}\,dr
\\
&=&c(n)\,\int_{\R^n}(\zeta_\e')^2\,\Big\{|\nabla\psi_i|^2+\frac{\psi_i^2}{|x|^2}\Big\}
\ge c(n)\,\int_{\R^n}(\zeta_\e')^2\,\frac{\psi_i^2}{|x|^2}=c(n)\,\int_{\R^n}\frac{\vphi_i^2}{|x|^2}\,,
\end{eqnarray*}
that is \eqref{claim one}.

\medskip

\noindent {\it Proof of claim two}: Since $\vphi\in Z^*$ we have, for each $i=1,...,n$,
\[
0=\int_{\R^n}\hat{x}_i\,\zeta_\e'\,\vphi=\sum_{k=1}^\infty\int_{\R^n}\hat{x}_i\,\zeta_\e'\,\vphi_k\,,
\]
which, combined with $\vphi_k(x)=A_k(\hat{x})\,\vphi_k^*(|x|)=c(n)\,\hat{x}_k\,\vphi_k^*(|x|)$, gives
\begin{equation}
  \label{orthowrong}
  \int_0^\infty\,\vphi_i^*(r)\,\zeta_\e'(r)\,r^{n-1}\,dr=0\,,\qquad\forall i=1,...,n\,.
\end{equation}
We now prove, if $\vphi\in Z^*$, then
\begin{equation}
  \label{claim two star}
  \QQ_\e[\zeta_\e](\vphi_i)\ge \frac1{C\,\e}\,\int_{\R^n}(\vphi_i^*)^2\,,\qquad\forall i=1,...,n\,.
\end{equation}
Notice that \eqref{claim two star} implies \eqref{claim two} since $\vphi_i\le c(n)\,\vphi_i^*$ for $i=1,...,n$.

\medskip

We prove \eqref{claim two star} by contradiction, following closely the proof of \cite[Lemma 4.4]{maggi-restrepo}. Indeed, should \eqref{claim two star} fail, then there would be sequences $\e_j\to 0^+$ as $j\to\infty$ and $(\vphi_j)_j$ in $Z^*$ such that (up to rotations taking $i_j\in\{1,...,n\}$ to $i_j=1$ for all $j$)
\begin{eqnarray}
\label{gr1}
  &&\frac1{\e_j}\,\int_{\R^n}((\vphi_j)_1^*)^2=1\,,\qquad\forall j\,,
  \\
\label{gr2}
  &&\int_0^\infty\,(\vphi_j)_1^*(r)\,\zeta_{\e_j}'(r)\,r^{n-1}\,dr=0\,,\qquad\forall j\,,
  \\
\label{gr3}
  &&\lim_{j\to\infty}\QQ_{\e_j}[\zeta_{\e_j}]\big((\vphi_j)_1\big)=0\,.
\end{eqnarray}
Setting $R_0=1/\om_n^{1/n}$ and
\[
\beta_j(s)=(\vphi_j^*)_1(R_0+\e_j\,s)\,,\qquad \eta_j(s)=\zeta_{\e_j}(R_0+\e_j\,s)\,,\qquad s\in\R\,,
\]
we can rewrite \eqref{gr1} and \eqref{gr2} as
\begin{eqnarray}\label{gr1 rescaled}
  &&\int_{I_j}\beta_j(s)^2\,(R_0+\e_j\,s)^{n-1}\,ds=1\,,\qquad\forall j\,,
  \\\label{gr2 rescaled}
  &&\int_{I_j}\beta_j(s)\,\eta_j'(s)\,(R_0+\e_j\,s)^{n-1}\,ds=0\,,\qquad\forall j\,,
\end{eqnarray}
where $I_j=(-R_0/\e_j,\infty)$. Concerning \eqref{gr3} we notice that by $(\vphi_j)_1(x)=A_1(\hat x)\,(\vphi_j)_1^*(|x|)$ it follows that
\[
|\nabla (\vphi_j)_1|^2=|\nabla^{\SS^{n-1}}\,A_1|^2\,\big((\vphi_j)_1^*\big)^2+A_1^2\,|\nabla(\vphi_j)_1^*|^2
\]
so that, as $\|A_1\|_{L^2(\SS^{n-1})}=1$,
\[
\e_j\,\int_{\R^n}|\nabla (\vphi_j)_1|^2\ge\e_j\,\int_{\SS^n}A_1^2\,\int_0^\infty\,|\nabla(\vphi_j)_1^*|^2(r)\,r^{n-1}\,dr
=\int_{I_j}\beta_j'(s)^2\,(R_0+\e_j\,s)^{n-1}\,ds\,.
\]
Again by Fubini's theorem and thanks to  $\|A_1\|_{L^2(\SS^{n-1})}=1$ we find
\[
\int_{\R^n}\Big\{\frac{W''(\zeta_{\e_j})}{\e_j}-\Lambda_{\e_j}\,V''(\zeta_{\e_j})\Big\}\,(\vphi_j)_1^2
=\int_{I_j} \big\{W''(\eta_j)-\e_j\,\Lambda_{\e_j}\,V''(\eta_j)\big\}\,\beta_j^2\,(R_0+\e_j\,s)^{n-1}\,ds
\]
We can thus deduce from \eqref{gr3} that
\begin{equation}
  \label{gr3 rescaled}
  \lim_{j\to\infty}\int_{I_j}\Big((\beta_j')^2+\big\{W''(\eta_j)-\e_j\,\Lambda_{\e_j}\,V''(\eta_j)\big\}\,\beta_j^2\Big)\,
  (R_0+\e_j\,s)^{n-1}\,ds=0\,;
\end{equation}
and, in fact, by taking into account that $\e_j\,\Lambda_{\e_j}\to 0^+$ as $j\to\infty$ and that $|V''|\le C$ on $[0,1]$, we see from \eqref{gr1 rescaled} that \eqref{gr3 rescaled} is equivalent to
\begin{equation}
  \label{gr3 rescaled 2}
  \lim_{j\to\infty}\int_{I_j}\Big((\beta_j')^2+W''(\eta_j)\,\beta_j^2\Big)\,(R_0+\e_j\,s)^{n-1}\,ds=0\,.
\end{equation}
In turn, since $|W''|\le C$ on $[0,1]$, by combining \eqref{gr1 rescaled} with \eqref{gr3 rescaled 2} we see that $(\beta_j)_j$ is bounded in $W^{1,2}_{\rm loc}(\R)$. Hence, up to extracting a subsequence, we can find $\beta\in W^{1,2}_{\rm loc}(\R)$ such that $\beta_j\weak\beta$ weakly in $W^{1,2}(\R)$.

\medskip

In this position, we can repeat {\it verbatim} two arguments contained in the proof of \cite[Lemma 4.4]{maggi-restrepo}. The first argument shows that the sequence of probability measures $(\mu_j)_j$ defined by $\mu_j=\beta_j^2\,(R_0+\e_j\,s)^{n-1}\,ds$ is in the compactness case of the concentration-compactness principle, and thus satisfies
\begin{equation}
  \label{cc holds}
  \lim_{s\to\infty}\,\sup_j\,\mu_j\big(\R\setminus(-s,s)\big)=0\,.
\end{equation}
The second argument shows that
\begin{equation}
  \label{second argument}
  \lim_{j\to\infty}\int_{I_j}W''(\eta_j)\,\beta_j^2\,(R_0+\e_j\,s)^{n-1}\,ds=R_0^{n-1}\,\int_\R\,W''(\eta_0)\,\beta^2\,,
\end{equation}
where $\eta_0(s)=\eta(s-\tau_0)$, $\eta$ is the unique solution of $-\eta'=\sqrt{W(\eta)}$ on $\R$ with $\eta(0)=1/2$, and $\tau_0=\int_\R\,V'(\eta(s))\,\eta(s)\,s\,ds$.

\medskip

By \eqref{cc holds}, \eqref{gr1 rescaled}, \eqref{second argument} and \eqref{gr3 rescaled 2} we thus find
\begin{eqnarray}\label{pg1}
&&R_0^{n-1}\,\int_\R\,\beta^2=1\,,
\\\label{pg2}
&&\int_\R\,2\,(\beta')^2+W''(\eta_0)\,\beta^2\le 0\,.
\end{eqnarray}
By \cite[Lemma 4.3]{maggi-restrepo}, \eqref{pg2} implies that $\beta(s+\tau_0)=t\,\eta'(s)$ for some $t\ne 0$ (the case $t=0$ is ruled out by \eqref{pg1}). In other words, $\beta=t\,\eta_0'$.

\medskip

We now claim that
\begin{equation}
  \label{finek}
  \lim_{j\to\infty}\int_{I_j}\beta_j\,\eta_j'\,(R_0+\e_j\,s)^{n-1}\,ds=R_0^{n-1}\,\int_\R\,\beta\,\eta_0'\,.
\end{equation}
Indeed, by $|\eta_j'(s)|\le C\,e^{-|s|/C}$ for $s\in\R$ and by \eqref{cc holds} we see that
\begin{eqnarray*}
&&\Big|\int_{I_j\setminus(-s_0,s_0)}\beta_j\,\eta_j'\,(R_0+\e_j\,s)^{n-1}\,ds\Big|
\\
&&\le\Big(\int_{I_j}(\eta_j')^2\,(R_0+\e_j\,s)^{n-1}\,ds\Big)^{1/2}\,\mu_j\big(I_j\setminus(-s_0,s_0)\big)^{1/2}\le\om(s_0)\,,
\end{eqnarray*}
for some function $\om$, independent of $j$, such that $\om(s)\to 0^+$ as $s\to\infty$. Similarly
\[
\Big|\int_{\R\setminus(-s_0,s_0)}\,\beta\,\eta_0'\Big|\le\om(s_0)\,,
\]
and therefore \eqref{finek} follows since, as $j\to\infty$, $\beta_j\to\beta$ in $L^2_{\rm loc}(\R)$ and by $\eta_j'\to\eta'_0$ locally uniformly on $\R$. On combining $\beta=t\,\eta_0'$ with \eqref{gr2 rescaled} and \eqref{finek} we conclude that
\[
0=t\,R_0^{n-1}\,\int_\R\,(\eta_0')^2\,,
\]
and thus, that $\eta\equiv{\rm constant}$, reaching a contradiction.
\end{proof}

\section{Exponential convergence to a single diffused bubble (Proof of Theorem \ref{theorem convergence without bubbling})}\label{section convergence without bubbling}

\begin{proof}[Proof of Theorem \ref{theorem convergence without bubbling}] We are proving the theorem by showing the existence if $\e\in(0,\e_0)$ and $u_0$ is as in the statement, then
\begin{eqnarray}
\label{eq convergence rate energy0}
\AC(u(t))-\Psi (\e,1)\!\!&\le&\!\! C(\e,u_0)\, e^{-t/C_*(\e)}\,,
\\
\label{eq single ball}
\big\| u(t)-\tau_{x_0}[\zeta_\e]\big\|_{L^2(\R^n)}\!\!&\le&\!\! C(\e,u_0)\, e^{-t/C(\e)}\,,\qquad\forall t>1/C(\e,u_0)\,.
\end{eqnarray}
By the assumptions on $u_0$, Theorem \ref{theorem bubbling one} holds with $M=1$. In particular, the only accumulation points for the sequences $(\AC(u(t_j)))_j$ and $(\lambda_\e[u(t_j)])_j$ corresponding to any $t_j\to\infty$ as $j\to\infty$ are, respectively, $\Psi(\e,1)$ and $\Lambda_\e$, so that we have
\begin{equation}
\label{eq convergence of the energy}
\lim_{t\to \infty} \AC(u(t)) = \Psi(\e,1)\,,\qquad\lim_{t\to\infty}\l_\e[u(t)]=\Lambda_\e\,.
\end{equation}

\medskip

\noindent {\it Step one}: We prove that
\begin{equation}\label{eq min in L2}
\lim_{t\to\infty}\big\| u(t)-\tau_{x(t)}[\zeta_{\e}]\big\|_{(W^{1,2}\cap C^0)(\R^n)}=0\,.
\end{equation}
where, for each $t>0$, we have defined $x(t)\in\R^n$ so that
\[
\big\|u(t)-\tau_{x(t)}[\zeta_\e]\big\|_{L^2(\R^n)}\le \big\|u(t)-\tau_{x}[\zeta_\e]\big\|_{L^2(\R^n)}\,,\qquad\forall x\in\R^n\,.
\]
Indeed, by \eqref{eq convergence of the energy}, if $(t_j)_j$ is an arbitrary sequence with $t_j\to\infty$ as $j\to\infty$, then $((u(t_j))_j$ is a minimizing sequence of $\Psi(\e,1)$. Now, in \cite[step two, proof of Theorem 2.1]{maggi-restrepo} it is proved that if $u_j$ is a minimizing sequence of $\Psi(\e,1)$, then, up to extracting subsequences, there is $x\in\R^n$ such that
\[
\lim_{j\to\infty}\big\| u_j-\tau_{x}[\zeta_{\e}]\big\|_{(W^{1,2}\cap C^0)(\R^n)}=0\,.
\]
By combining this fact with the definition of $x(t)$ we conclude the proof of \eqref{eq min in L2}.

\medskip

\noindent {\it Step two}: We introduce the Fisher information of the flow
\begin{equation}\label{eq Fisher}
\mathcal{I}_\e(t)=\e\,\int_{\R^n}\big(\pa_tu(t)\big)^2\,,\qquad\forall t>0\,,
\end{equation}
and prove that
\begin{equation}\label{eq fisher decay}
\mathcal{I}_\e (t) \leq C(\e,u_0)\, e^{-t/C(\e)}\,,\qquad\forall t>1/C(\e,u_0)\,.
\end{equation}
Indeed, by Theorem \ref{thm existence and regularity}-(vi) (see, in particular, \eqref{dissipation formula}) we have that $\I_\e\in W^{1,1}(a,\infty)$ for every $a>0$, with
\begin{equation}\label{eq bakryemery}
\frac{d}{dt}\,\frac{\I_\e(t)}2= -\int_{\R^n} \Big\{2\,\e\,|\nabla(\pa_tu)|^2
+\Big( \frac{W''(u)}\e-\lambda_\e[u(t)]\,V''(u)\Big)\,(\pa_tu)^2\Big\}\,.
\end{equation}
By $W''\in\Lip[0,1]$, $V''\in C^{0,\a(n)}[0,1]$, and $0<\Lambda_\e<C$ for all $\e\in(0,\e_0)$ (recall \eqref{limit of Lambda eps}) we find that, pointwise on $\R^n$,
\begin{eqnarray*}
&&\Big|\Big( \frac{W''(u(t))}\e-\lambda_\e[u(t)]\,V''(u(t))\Big)-\Big( \frac{W''(\tau_{x(t)}[\zeta_\e])}\e-\Lambda_\e\,V''(\tau_{x(t)}[\zeta_\e])\Big)\Big|
\\
&&\le C\,\Big\{\frac{|u(t)-\tau_{x(t)}[\zeta_\e]|}\e+\big|\lambda_\e[u(t)]-\Lambda_\e\big|+\big|u(t)-\tau_{x(t)}[\zeta_\e]\big|^{\a(n)}\Big\}\,.
\end{eqnarray*}
Combining this inequality with \eqref{eq bakryemery} we find that
\begin{eqnarray*}
&&-\frac{d}{dt}\,\frac{\I_\e(t)}2\ge\QQ_\e[\tau_{x(t)}\big[\zeta_\e]\big]\big(\pa_tu(t)\big)
\\\nonumber
&&-C\,\Big\{\frac{\|u-\tau_{x(t)}[\zeta_\e]\|_{C^0(\R^n)}}\e+\big|\lambda_\e[u(t)]-\Lambda_\e\big|
+\big\|u-\tau_{x(t)}[\zeta_\e]\big\|_{C^0(\R^n)}^{\a(n)}\Big\}\,\int_{\R^n}(\pa_tu(t))^2\,,
\end{eqnarray*}
so that \eqref{eq convergence of the energy} and \eqref{eq min in L2} imply the existence, for every $\eta>0$, of a positive constant $t_*=t_*(\e,u_0,\eta)$, such that
\begin{eqnarray}
\label{eq errorbakryemery}
&&-\frac{d}{dt}\,\frac{\I_\e(t)}2\ge\QQ_\e[\tau_{x(t)}\big[\zeta_\e]\big]\big(\pa_tu(t)\big)
-\eta\,\I_\e(t)\,,\qquad\forall t>t_*\,.
\end{eqnarray}
Now let $P_t$ denote the projection operator of $L^2(\R^n)$ onto its closed subspace
\[
Y_t=\Big\{\vphi\in L^2(\R^n):\int_{\R^n}V'\big(\tau_{x(t)}[\zeta_\e]\big)\,\vphi=\int_{\R^n}\pa_i\big(\tau_{x(t)}[\zeta_\e]\big)\,\vphi=0\Big\}\,.
\]
By Theorem \ref{theorem eps stability} we have
\begin{equation}
  \label{presta}
  \QQ_\e[\tau_{x(t)}\big[\zeta_\e]\big]\big(\pa_tu(t)\big)\ge\frac1{C(\e)}\,\int_{\R^n}\big(P_t[\pa_tu(t)]\big)^2\,.
\end{equation}
To get a control on $\I_\e(t)$ we thus need to estimate $\|\pa_tu(t)-P_t[\pa_tu(t)]\|_{L^2(\R^n)}$.

\medskip

To this end, let us begin by noticing that, since $\int_{\R^n}V'(u(t))\,\pa_tu(t)=0$ for all $t>0$, for every $\de>0$ there is $t_1=t_1(\de,u_0)>0$ such that
\begin{eqnarray}\nonumber
\Big|\int_{\R^n}V'(\tau_{x(t)}[\zeta_\e])\,\pa_tu(t)\Big|&\le& C\,\|u(t)-\tau_{x(t)}[\zeta_\e]\|_{L^2(\R^n)}\,\|\pa_tu(t)\|_{L^2(\R^n)}
\\\label{orto 1}
&\le&\de\,\|\pa_tu(t)\|_{L^2(\R^n)}\,,\qquad\forall t>t_1\,.
\end{eqnarray}
On the other hand, testing \eqref{diffused VPMCF} with $\pa_iu(t)\in W^{2,2}(\R^n)$ we find that
\begin{eqnarray*}
 \e^2\, \int_{\R^n}\pa_tu(t)\,\nabla_{e_i}u(t)=2\,\e^2\,\int_{\R^n}\Delta u(t)\,\pa_iu(t)
 -\int_{\R^n}\pa_i\big(W(u(t))-\l_\e[u(t)]\,V(u(t))\big)\,,
\end{eqnarray*}
(notice that $\l_\e[u(t)]$ is a function of $t$ alone, and is not affected by differentiation along $e_i$ here), and since $W(u(t)),V(u(t))\in W^{1,2}(\R^n)$ for all $t>0$, we conclude that
\[
\int_{\R^n}\pa_tu(t)\,\pa_iu(t)=2\,\int_{\R^n}\Delta u(t)\,\pa_iu(t)
=-2\,\int_{\R^n}\nabla u(t)\cdot\pa_i(\nabla u(t))=-\int_{\R^n}\pa_i|\nabla u(t)|^2=0\,,
\]
for all $t>0$. Combining this identity with \eqref{eq min in L2} we thus conclude that, up to further increase the value of $t_1=t_1(\de,u_0)$, we have
\begin{eqnarray}\nonumber
\Big|\int_{\R^n}\pa_i\big(\tau_{x(t)}[\zeta_\e]\big)\,\pa_tu(t)\Big|&\le& C\,\|\pa_iu(t)-\pa_i\tau_{x(t)}[\zeta_\e]\|_{L^2(\R^n)}\,\|\pa_tu(t)\|_{L^2(\R^n)}
\\\label{orto 2}
&\le&\de\,\|\pa_tu(t)\|_{L^2(\R^n)}\,,\qquad\forall t>t_1\,,i=1,...,n\,.
\end{eqnarray}
Therefore, by choosing $\de=\de(\e)>0$ small enough in terms of the constant $C(\e)$ appearing in \eqref{presta}, we conclude from \eqref{orto 1} and \eqref{orto 2} that, if $t>t_1=t_1(\de(\e),u_0)=t_1(\e,u_0)$, then, for a positive constant $C_*(\e)$ depending only on $\e$,
\begin{equation}
  \label{coe}
  \QQ_\e\big[\tau_{x(t)}\zeta_\e\big](\pa_tu(t))\geq \frac{\I_\e(\pa_tu(t))}{C_*(\e)}\,,\qquad\forall t>t_1\,.
\end{equation}
If we now choose $\eta=1/(2\,C_*(\e))$ in \eqref{eq errorbakryemery}, and correspondingly set $t_0=\min\{t_1,t_*\}$, then, we deduce from  \eqref{coe} that
\[
-\frac{d}{dt}\,\frac{\I_\e(t)}2\ge \frac{\I_\e(t)}{2\,C_*(\e)}\,,\qquad\forall t>t_0\,,
\]
from which \eqref{eq fisher decay} immediately follows.

\medskip

\noindent {\it Conclusion}: By Theorem \ref{thm existence and regularity}-(iv),
\[
\AC(u(T))-\AC(u(t)) = -\e\,\int_{t}^T\,ds\int_{\R^n} (\pa_tu(s))^2\,,\qquad\forall T>t>0\,.
\]
Combining this identity with \eqref{eq convergence of the energy} and \eqref{eq fisher decay} we find that, if $t>1/C(\e,u_0)$, then
\[
\AC(u(t))-\Psi(\e,1)= \int_t^\infty\,\I_\e(s)\,ds\le C(\e,u_0)\,\int_t^\infty e^{-s/C(\e)}\,ds\le C(\e,u_0)\,e^{-t/C(\e)}\,,
\]
thus proving \eqref{eq convergence rate energy0}. Next we notice that if $1/C(\e,u_0)<t<T$, then by combining the fundamental theorem of Calculus with the Minkowski inequality, the H\"older inequality and then with \eqref{eq fisher decay}, we obtain
\begin{eqnarray*}
  &&\|u(T)-u(t)\|_{L^2(\R^n)}\le\Big(\int_{\R^n}\Big|\int_t^T\pa_tu(s)ds\Big|^2\Big)^{1/2}
  \le\int_t^T\Big(\int_{\R^n}|\pa_tu(s)|^2\Big)^{1/2}\,ds
  \\
  &\le&\sum_{k=0}^\infty\int_{t+k}^{t+k+1}\Big(\int_{\R^n}|\pa_tu(s)|^2\Big)^{1/2}\,ds
  \le\sum_{k=0}^\infty\Big(\int_{t+k}^{t+k+1}ds\int_{\R^n}|\pa_tu(s)|^2\Big)^{1/2}
  \\
  &\le&\sum_{k=0}^\infty\Big(\int_{t+k}^\infty ds\int_{\R^n}|\pa_tu(s)|^2\Big)^{1/2}
  \le C(\e,u_0)\,\sum_{k=0}^\infty e^{-(t+k)/C(\e)}\le C(\e,u_0)\,e^{-t/C(\e)}\,,
\end{eqnarray*}
that is
\begin{equation}
  \label{sinnerberrettini}
  \|u(T)-u(t)\|_{L^2(\R^n)}\le C(\e,u_0)\,e^{-t/C(\e)}\,,\qquad\forall T>t>1/C(\e,u_0)\,.
\end{equation}
Now let $t_j\to\infty$ as $j\to\infty$: since $(u(t_j))_j$ is a minimizing sequence of $\Psi(\e,1)$, then, by the argument in step one and up to extracting a subsequence, there is $x_0\in\R^n$ such that $\|u(t_j)-\tau_{x_0}[\zeta_\e]\|_{L^2(\R^n)}\to 0$ as $j\to\infty$. By taking $T=t_j$ in \eqref{sinnerberrettini} and letting $j\to\infty$ in the corresponding inequality we thus complete the proof of \eqref{eq single ball}, and thus, of the theorem.
\end{proof}

\bibliographystyle{alpha}
\bibliography{references_mod}	

\begin{thebibliography}{BHNN15}

\bibitem[ABF98]{aliBfusco}
N.~D. Alikakos, L.~Bronsard, and G.~Fusco.
\newblock Slow motion in the gradient theory of phase transitions via energy
  and spectrum.
\newblock {\em Calc. Var. Partial Differential Equations}, 6(1):39--66, 1998.

\bibitem[AF94]{alifusco}
N.~D. Alikakos and G.~Fusco.
\newblock Slow dynamics for the {C}ahn-{H}illiard equation in higher space
  dimensions. {I}. {S}pectral estimates.
\newblock {\em Comm. Partial Differential Equations}, 19(9-10), 1994.

\bibitem[AFP00]{AFP}
L.~Ambrosio, N.~Fusco, and D.~Pallara.
\newblock {\em Functions of bounded variation and free discontinuity problems}.
\newblock Oxford Mathematical Monographs. The Clarendon Press, Oxford
  University Press, New York, 2000.

\bibitem[Alm76]{Almgren76}
F.~J.~Jr. Almgren.
\newblock Existence and regularity almost everywhere of solutions to elliptic
  variational problems with constraints.
\newblock {\em Mem. Amer. Math. Soc.}, 4(165):viii+199 pp, 1976.

\bibitem[BHNN15]{bellettiniHNnovaga}
G.~Bellettini, A.~Hassem~Nayam, and M.~Novaga.
\newblock {$\Gamma$}-type estimates for the one-dimensional {A}llen-{C}ahn's
  action.
\newblock {\em Asymptotic Analysis}, 94(1-2):161--185, 2015.

\bibitem[BK90]{bronkohnCPAM}
L.~Bronsard and R.~V. Kohn.
\newblock On the slowness of phase boundary motion in one space dimension.
\newblock {\em Comm. Pure Appl. Math.}, 43(8):983--997, 1990.

\bibitem[BK91]{bronkohnJDE}
L.~Bronsard and R.~V. Kohn.
\newblock Motion by mean curvature as the singular limit of {G}inzburg-{L}andau
  dynamics.
\newblock {\em J. Differential Equations}, 90(2):211--237, 1991.

\bibitem[BOS06]{bethuelorlandismets}
F.~Bethuel, G.~Orlandi, and D.~Smets.
\newblock Convergence of the parabolic {G}inzburg-{L}andau equation to motion
  by mean curvature.
\newblock {\em Ann. of Math. (2)}, 163(1):37--163, 2006.

\bibitem[BS97]{bronsardStoth}
L.~Bronsard and B.~Stoth.
\newblock Volume-preserving mean curvature flow as a limit of a nonlocal
  {G}inzburg-{L}andau equation.
\newblock {\em SIAM J. Math. Anal.}, 28(4):769--807, 1997.

\bibitem[CH98]{cazehara}
T.~Cazenave and A.~Haraux.
\newblock {\em An introduction to semilinear evolution equations}, volume~13 of
  {\em Oxford Lecture Series in Mathematics and its Applications}.
\newblock The Clarendon Press, Oxford University Press, New York, 1998.
\newblock Translated from the 1990 French original by Yvan Martel and revised
  by the authors.

\bibitem[CL12]{CicaleseLeonardi}
M.~Cicalese and G.~P. Leonardi.
\newblock A selection principle for the sharp quantitative isoperimetric
  inequality.
\newblock {\em Arch. Rat. Mech. Anal.}, 206(2):617--643, 2012.

\bibitem[ES98]{escher-simonett}
J.~Escher and G.~Simonett.
\newblock The volume preserving mean curvature flow near spheres.
\newblock {\em Proc. Amer. Math. Soc.}, 126(9):2789--2796, 1998.

\bibitem[Eva98]{EvansPDE}
L.~C. Evans.
\newblock {\em Partial differential equations}, volume~19 of {\em Graduate
  Studies in Mathematics}.
\newblock American Mathematical Society, Providence, RI, 1998.

\bibitem[Fal10]{fall}
M.~M. Fall.
\newblock Area-minimizing regions with small volume in {R}iemannian manifolds
  with boundary.
\newblock {\em Pacific J. Math.}, 244(2):235--260, 2010.

\bibitem[Fei97]{feireisl1997long}
E.~Feireisl.
\newblock On the long time behaviour of solutions to nonlinear diffusion
  equations on {$\mathbb{R}^n$}.
\newblock {\em Nonlinear Differential Equations and Applications NoDEA},
  4(1):43--60, 1997.

\bibitem[FJM22]{fjmo22}
N.~Fusco, V.~Julin, and M.~Morini.
\newblock Stationary sets and asymptotic behavior of the mean curvature flow
  with forcing in the plane.
\newblock {\em J. Geom. Anal.}, 32(2):Paper No. 53, 29, 2022.

\bibitem[FMP07]{fmpraBV}
N.~Fusco, F.~Maggi, and A.~Pratelli.
\newblock The sharp quantitative {S}obolev inequality for functions of bounded
  variation.
\newblock {\em J. Funct. Anal.}, 244(1):315--341, 2007.

\bibitem[Gas20]{gaspar2020second}
P.~Gaspar.
\newblock The second inner variation of energy and the morse index of limit
  interfaces.
\newblock {\em The Journal of Geometric Analysis}, 30(1):69--85, 2020.

\bibitem[GNN81]{gidas1981symmetry}
B.~Gidas, W.-M. Ni, and L.~Nirenberg.
\newblock Symmetry of positive solutions of nonlinear elliptic equations in
  $\mathbb{R}^n$.
\newblock {\em Adv. Math. Suppl. Stud., A}, 7:369--402, 1981.

\bibitem[Gra95]{grant}
C.~P. Grant.
\newblock Slow motion in one-dimensional {C}ahn-{M}orral systems.
\newblock {\em SIAM J. Math. Anal.}, 26(1):21--34, 1995.

\bibitem[Hui87]{huisken_vpmcf}
G.~Huisken.
\newblock The volume preserving mean curvature flow.
\newblock {\em J. Reine Angew. Math.}, 382:35--48, 1987.

\bibitem[Ilm93]{ilmanen}
T.~Ilmanen.
\newblock Convergence of the {A}llen-{C}ahn equation to {B}rakke's motion by
  mean curvature.
\newblock {\em J. Differential Geom.}, 38(2):417--461, 1993.

\bibitem[JMOS24]{jmos24}
V.~Julin, M.~Morini, F.~Oronzio, and E.~Spadaro.
\newblock A sharp quantitative {A}lexandrov inequality and applications to
  volume preserving geometric flows in {3D}.
\newblock 2024.
\newblock arxiv 2406.17691.

\bibitem[JMPS23]{julin-morini-ponsiglione-spadaro}
V.~Julin, M.~Morini, M.~Ponsiglione, and E.~Spadaro.
\newblock The asymptotics of the area-preserving mean curvature and the
  {M}ullins-{S}ekerka flow in two dimensions.
\newblock {\em Math. Ann.}, 387(3-4):1969--1999, 2023.

\bibitem[JN23]{julin-niinikoski}
V.~Julin and J.~Niinikoski.
\newblock Quantitative {A}lexandrov theorem and asymptotic behavior of the
  volume preserving mean curvature flow.
\newblock {\em Anal. PDE}, 16(3):679--710, 2023.

\bibitem[KK20]{kim-kwon}
I.~Kim and D.~Kwon.
\newblock Volume preserving mean curvature flow for star-shaped sets.
\newblock {\em Calc. Var. Partial Differential Equations}, 59(2):Paper No. 81,
  40, 2020.

\bibitem[Le11]{le2011second}
N.~Q. Le.
\newblock On the second inner variation of the {A}llen--{C}ahn functional and
  its applications.
\newblock {\em Indiana University Mathematics Journal}, pages 1843--1856, 2011.

\bibitem[Li09]{haozhao}
H.~Li.
\newblock The volume-preserving mean curvature flow in {E}uclidean space.
\newblock {\em Pacific J. Math.}, 243(2):331--355, 2009.

\bibitem[Lie96]{lieberman}
G.~M. Lieberman.
\newblock {\em Second order parabolic differential equations}.
\newblock World Scientific Publishing Co., Inc., River Edge, NJ, 1996.

\bibitem[LM89]{luckhausmodica89}
S.~Luckhaus and L.~Modica.
\newblock The {G}ibbs-{T}hompson relation within the gradient theory of phase
  transitions.
\newblock {\em Arch. Rational Mech. Anal.}, 107(1):71--83, 1989.

\bibitem[LM19]{leonimurrayPAMS}
G.~Leoni and R.~Murray.
\newblock Local minimizers and slow motion for the mass preserving
  {A}llen-{C}ahn equation in higher dimensions.
\newblock {\em Proc. Amer. Math. Soc.}, 147(12):5167--5182, 2019.

\bibitem[Mag12]{maggiBOOK}
F.~Maggi.
\newblock {\em Sets of finite perimeter and geometric variational problems: an
  introduction to Geometric Measure Theory}, volume 135 of {\em Cambridge
  Studies in Advanced Mathematics}.
\newblock Cambridge University Press, 2012.

\bibitem[MM16]{maggimihaila}
F.~Maggi and C.~Mihaila.
\newblock On the shape of capillarity droplets in a container.
\newblock {\em Calc. Var. Partial Differential Equations}, 55(5):Art. 122, 42,
  2016.

\bibitem[MNR23]{maggi2023hierarchy}
F.~Maggi, M.~Novack, and D.~Restrepo.
\newblock A hierarchy of {P}lateau problems and the approximation of
  {P}lateau's laws via the {A}llen--{C}ahn equation.
\newblock 2023.
\newblock arXiv 2312.11139.

\bibitem[MPS22]{morini-ponsiglione-spadaro}
M.~Morini, M.~Ponsiglione, and E.~Spadaro.
\newblock Long time behavior of discrete volume preserving mean curvature
  flows.
\newblock {\em Journal f{\"u}r die reine und angewandte Mathematik (Crelles
  Journal)}, 2022(784):27--51, 2022.

\bibitem[MR16]{murrayrinaldi}
R.~Murray and M.~Rinaldi.
\newblock Slow motion for the nonlocal {A}llen-{C}ahn equation in {$n$}
  dimensions.
\newblock {\em Calc. Var. Partial Differential Equations}, 55(6):Art. 147, 33,
  2016.

\bibitem[MR24]{maggi-restrepo}
F.~Maggi and D.~Restrepo.
\newblock Uniform stability in the {E}uclidean isoperimetric problem for the
  {A}llen--{C}ahn energy.
\newblock {\em Anal. PDE}, 17(5):1761--1830, 2024.

\bibitem[Ton05]{tonegawa2005stable}
Y.~Tonegawa.
\newblock On stable critical points for a singular perturbation problem.
\newblock {\em Communications in Analysis and Geometry}, 13(2):439--459, 2005.

\end{thebibliography}
\end{document}